 \newtheorem{theorem}{Theorem}
 \newtheorem{lemma}{Lemma}
 \newtheorem{corollary}{Corollary}
\begin{document}

\title{Diophantine approximation on lines in $\mathbb{C}^2$ with Gaussian prime constraints - enhanced version}
\author{Stephan Baier}
\address{Stephan Baier, Jawaharlal Nehru University, Munirka, School of Physical Sciences,
Delhi 11067, India}

\email{email\_baier@yahoo.de}

\subjclass[2000]{11J83, 11K60, 11L07}

\maketitle

\begin{abstract}
We study the problem of Diophantine approximation on lines in $\mathbb{C}^2$ with numerators and denominators restricted to Gaussian primes. To 
this end, we develop analogs of well-known results on small fractional parts of $p\gamma$, $p$ running over the primes and $\gamma$ being a fixed 
irrational, for Gaussian primes. 
\end{abstract}
\maketitle


\section{Introduction}
Let $\psi : \mathbb{N} \rightarrow \mathbb{R}^+$ be a non-increasing function. 
A real number $x$ is called $\psi$-approximable if there exist infinitely many rational numbers $m/n$ with $m\in\mathbb{N}$ and 
$n\in \mathbb{Z}\setminus\{0\}$ such that
$$
\left| x- \frac{m}{n} \right| < \frac{\psi(m)}{|n|}.
$$
Khinchin's famous theorem on Diophantine approximation says
that if the series $\sum_{m} \psi(m)$ diverges, then almost every real number, in the sense of Lebesgue measure, 
is $\psi$-approximable, and if the series converges, then almost every real number is not $\psi$-approximable.
It is an interesting problem to extend problems of this kind to manifolds in place of the set of real numbers. A nice survey 
of results on Diophantine approximation on affine subspaces was given by A. Ghosh \cite{Gho}.
It is also natural to restrict the numerators and denominators in these problems to certain sets such as primes. Indeed, a version of 
Khinchin's theorem with prime numerators and denominators was proved by G. Harman \cite{Harman} and later extended to higher dimensions
by H. Jones \cite{Jones}. For plane curves of the form $y=x^{\tau}$, Harman and Jones proved the folling Khinchin-type result with prime-restrictions
in their joint work \cite{HJ}.

\begin{theorem} \label{HaJo} Let $\varepsilon>0$ and $\tau> 1$. Then for almost all positive $\alpha$ there are
infinitely many $p$, $q$, $r$, all prime, such that
\begin{equation*} 
0 < p\alpha - r \le p^{-1/6+\varepsilon} \quad \mbox{and} \quad 0< p\alpha^{\tau} - q \le p^{-1/6+\varepsilon}.
\end{equation*}
\end{theorem}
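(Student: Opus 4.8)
The plan is to establish the stronger statement that for almost every $\alpha$ in a fixed compact subinterval of $(0,\infty)$ — say $\alpha\in[1,2]$, the general case following by a countable covering and rescaling — and for every large dyadic $P=2^k$, there is a prime $p\in(P,2P]$ together with primes $q,r$ satisfying
\[
0<p\alpha-r\le\delta\qquad\text{and}\qquad 0<p\alpha^\tau-q\le\delta,\qquad \delta:=P^{-1/6+\varepsilon};
\]
since $p\asymp P$ here, this implies Theorem~\ref{HaJo}. Write $\A_P\subset[1,2]$ for the set of such $\alpha$. It suffices to show $|\A_P|\to1$ as $P=2^k\to\infty$, since then $\bigl|\bigcup_{k\ge K}\A_{2^k}\bigr|\ge\sup_{k\ge K}|\A_{2^k}|\to1$ for every $K$, and hence $\bigl|\limsup_k\A_{2^k}\bigr|=1$. (A weaker bound $|\A_P|\gg(\log P)^{-O(1)}$ would also suffice, via the divergence form of the Borel--Cantelli lemma together with quasi-independence of the sets $\A_{2^k}$.) For $P$ large we have $\delta<1$, so the existence of a prime $r$ with $0<p\alpha-r\le\delta$ is equivalent to the conjunction ``$0<\{p\alpha\}\le\delta$ and $\lf p\alpha\rf$ is prime'', and similarly for $q$ and $p\alpha^\tau$.

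I would bound $|\A_P|$ below by a second moment argument. Write $e(x):=e^{2\pi i x}$, let $f_p(\alpha)$ be the indicator of the event that both reformulated conditions hold for the prime $p$, and set $N_P(\alpha)=\sum_{P<p\le 2P,\ p\text{ prime}}f_p(\alpha)$; equivalently $N_P(\alpha)$ counts triples of primes $(p,q,r)$ with $p\in(P,2P]$, $p\alpha\in(r,r+\delta]$ and $p\alpha^\tau\in(q,q+\delta]$. By Cauchy--Schwarz,
\[
|\A_P|=\bigl|\{\alpha\in[1,2]:N_P(\alpha)\ge1\}\bigr|\ \ge\ \Bigl(\int_1^2 N_P\,d\alpha\Bigr)^{\!2}\Big/\int_1^2 N_P^2\,d\alpha,
\]
so it suffices to prove $\int_1^2 N_P\,d\alpha\to\infty$ and $\int_1^2 N_P^2\,d\alpha=\bigl(\int_1^2 N_P\,d\alpha\bigr)^2(1+o(1))+O\bigl(\int_1^2 N_P\,d\alpha\bigr)$. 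For the first moment, for fixed primes $p,r$ with $r\asymp p$ the admissible $\alpha$ form an interval of length $\asymp\delta/p$ on which $p\alpha^\tau$ sweeps an interval of length $\asymp_\tau\delta$ starting at $c_{p,r}:=r^\tau p^{1-\tau}\asymp P$, and the pair $(p,r)$ contributes $\gg\delta/p$ exactly when $\|c_{p,r}\|\ll\delta$ and the nearest integer to $c_{p,r}$ is prime. Counting the pairs of primes $p,r\in(P,2P]$ with $\|c_{p,r}\|\ll\delta$ rests on the equidistribution modulo one, at scale $\delta$, of $r\mapsto m\,p^{1-\tau}r^\tau$ over primes, which follows from a power-saving bound for the prime exponential sum $\sum_{r\sim P}\Lambda(r)e(\beta r^\tau)$ — via Vaughan's identity together with van der Corput / exponent-pair estimates for the power $r^\tau$ — uniform over the frequencies $\beta=m\,p^{1-\tau}$ with $1\le m\ll\delta^{-1}$. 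The primality of the nearest integer to $c_{p,r}$, i.e.\ of the third prime in the triple, I would extract from a lower bound of the expected order by Harman's sieve (cf.~\cite{Harman}), whose bilinear remainders have the same analytic shape. This should yield $\int_1^2 N_P\,d\alpha\gg\delta^2 P(\log P)^{-3}=P^{2/3+2\varepsilon}(\log P)^{-3}\to\infty$, and the requirement that all of these remainder terms be of smaller order than the main term is precisely what forces the exponent $1/6$.

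For the second moment, the diagonal $p_1=p_2$ contributes exactly $\int_1^2 N_P\,d\alpha$. For $p_1\ne p_2$ I would prove $\int_1^2 f_{p_1}f_{p_2}\,d\alpha=\bigl(\int_1^2 f_{p_1}\,d\alpha\bigr)\bigl(\int_1^2 f_{p_2}\,d\alpha\bigr)(1+o(1))$: expanding the four interval indicators using Beurling--Selberg type trigonometric approximations reduces this correlation to a main term plus oscillatory integrals
\[
\int_1^2 e\bigl(a_1p_1\alpha+a_2p_1\alpha^\tau+b_1p_2\alpha+b_2p_2\alpha^\tau\bigr)\,d\alpha,\qquad (a_1,a_2,b_1,b_2)\ne0,
\]
multiplied by linear exponential sums over the primes $q_i,r_i$; since $p_1\ne p_2$ are primes exceeding every frequency in play, the phase is non-stationary away from a single point, so van der Corput's first- and second-derivative tests — with the customary separate treatment of the few resonant frequency vectors, handled trivially — make these contributions negligible. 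Summing over $p_1\ne p_2$ produces the off-diagonal main term $\bigl(\int_1^2 N_P\,d\alpha\bigr)^2(1+o(1))$, giving the desired bound on $\int_1^2 N_P^2\,d\alpha$, and since $\int_1^2 N_P\,d\alpha\to\infty$ we conclude $|\A_P|\ge1-o(1)$.

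The main obstacle is the interplay of the three prime constraints with the nonlinear curve $y=x^\tau$: one needs a power-saving estimate for $\sum_{r\sim P}\Lambda(r)e(\beta r^\tau)$ uniform over the progressions of frequencies $\beta$ that arise above, and simultaneously a sieve detecting the primality of $\lf p\alpha^\tau\rf$ whose admissible range of moduli is compatible with that estimate. Reconciling these requirements, against the loss inherent in the exponential-sum method for the non-integral power $r^\tau$, is what pins the admissible exponent down to $1/6-\varepsilon$.
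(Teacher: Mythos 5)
The paper does not actually prove Theorem~\ref{HaJo}; it is quoted verbatim from Harman and Jones~\cite{HJ} as motivation. The relevant comparison is therefore with the Harman--Jones method, which is reproduced in this paper (for the line case and its $\mathbb{Q}(i)$ analogue) in the form of Lemma~\ref{metric} and Theorem~\ref{Theo}. That method is a \emph{Lebesgue-density} argument: one shows (a) that the integral of the counting function $F_N(\alpha)$ over every small rectangle in the $\alpha$-space exceeds the expected value $G_N$ times the measure of the rectangle, and (b) a \emph{pointwise} sieve-theoretic upper bound $F_N(\alpha)\le KG_N+J_N(\alpha)$ with $\int|J_N|=o(G_N)$, proved by Selberg's upper-bound sieve (Lemma~\ref{uppersieve}). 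One then argues by contradiction via the Lebesgue density theorem. Your route is genuinely different: you invoke the Paley--Zygmund/Cauchy--Schwarz inequality $|\A_P|\ge(\int N_P)^2/\int N_P^2$ and aim to verify a second-moment asymptotic $\int N_P^2=(\int N_P)^2(1+o(1))+O(\int N_P)$.

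This difference is not cosmetic, and it is where the gap lies. The whole point of the Harman--Jones density lemma is that it \emph{avoids} the variance $\int N_P^2-(\int N_P)^2$: the upper-bound condition~\eqref{constantK} is verified from a one-sided pointwise sieve inequality, not from a correlation estimate. Your plan instead requires controlling $\sum_{p_1\ne p_2}\int f_{p_1}f_{p_2}\,d\alpha$, which, after unfolding the primality of $q_i$ and $r_i$, is a genuinely harder six-prime counting problem. You have not established it, and your dismissal of the resonant frequency tuples is not justified. Concretely, in your oscillatory integral $\int_1^2 e(A\alpha+B\alpha^\tau)\,d\alpha$ with $A=a_1p_1+b_1p_2$ and $B=a_2p_1+b_2p_2$, the stationary-phase/van der Corput bound is only $\ll\max(|A|,|B|)^{-1/2}$, and there is a set of tuples $(a_1,b_1)$ with $a_1,b_1\ne 0$ but $|a_1p_1+b_1p_2|=O(1)$ (and likewise for $(a_2,b_2)$) for which the integral does not decay at all; for a positive proportion of pairs $p_1,p_2$ this resonant set has cardinality comparable to $\delta^{-2}$ per variable, so a trivial treatment does \emph{not} make it negligible against the main term. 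One would have to couple the smallness of the Beurling--Selberg coefficients at these resonant frequencies with a count of the near-solutions to $|a_1p_1+b_1p_2|\ll 1$, and then still carry the prime constraints on $q_i,r_i$ through the argument; none of this is in your sketch. Unless you can supply that variance estimate (or replace it by the pointwise sieve upper bound and use the density lemma, as \cite{HJ} do), the proof is incomplete. I would strongly recommend switching to the density framework: it is precisely what lets one get away with an upper-bound sieve (which loses only a constant factor) rather than an asymptotic correlation estimate, and it is the reason the exponent $1/6$ is attainable with the tools you list.
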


In \cite{BG1}, A. Ghosh and the author of the present paper
considered the same problem for lines in the plane, passing through the orign, establishing the following. 

\begin{theorem} \label{real} Let $\varepsilon>0$ and let $c>1$ be an irrational number. Then for almost all positive $\alpha$, with respect to the Lebesgue measure, there are infinitely many triples $(p,q,r)$ with $p$ and $r$ prime and $q$ an integer such that
\begin{equation*} 
0 < p\alpha - r \le p^{-1/5+\varepsilon} \quad \mbox{and} \quad 0< pc\alpha - q \le p^{-1/5+\varepsilon}.
\end{equation*}
\end{theorem}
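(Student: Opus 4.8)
The plan is to recast the two inequalities, for each prime $p$, as a condition on $\alpha$ defining a subset $E_p$ of a fixed bounded interval, and then to show by a second–moment (quasi–independence) argument --- in the spirit of the method behind Theorem~\ref{HaJo} --- that almost every $\alpha$ lies in infinitely many of the $E_p$. Since the conditions are not dilation invariant, it suffices, after a countable union of intervals and discarding a null set, to treat $\alpha$ in a fixed compact $I\subset(1,\infty)$ containing no integer. Put $\delta_p:=p^{-1/5+\varepsilon}$ and $J_{p,r}:=\left(\tfrac rp,\ \tfrac rp+\tfrac{\delta_p}p\right]$, and let
\[
E_p:=\bigl\{\alpha\in I:\ \alpha\in J_{p,r}\ \text{for some prime }r\ \text{ and }\ 0<\{pc\alpha\}\le\delta_p\bigr\}.
\]
Then $\alpha\in E_p$ precisely when the two inequalities of the theorem hold for some prime $r$ and some integer $q$ with this $p$, so the statement is equivalent to $\mu\bigl(\limsup_pE_p\bigr)=\mu(I)$. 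For this I would establish (i) $\sum_p\mu(E_p)=\infty$, and (ii) the quasi–independence estimate $\sum_{p_1,p_2\le N}\mu(E_{p_1}\cap E_{p_2})\ll\bigl(\sum_{p\le N}\mu(E_p)\bigr)^2$ along a sequence $N\to\infty$. The Erd\H{o}s--G\'al divergence form of the Borel--Cantelli lemma then gives $\mu(\limsup_pE_p)>0$, and since (i) and (ii) hold with constants uniform over subintervals of $I$, the Lebesgue density theorem upgrades this to full measure.

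\textbf{First moment.} Fix $p$ large. On $J_{p,r}$ the image of $\alpha\mapsto pc\alpha$ is an interval of length $c\delta_p<1$, so $0<\{pc\alpha\}\le\delta_p$ cuts out a set of measure $\tfrac1{pc}\Phi(\{cr\})$ for a fixed tent-shaped $\Phi\ge0$ supported near $0$ and $1$, of height $\asymp\delta_p$ and with $\int_0^1\Phi=c\delta_p^2$. Thus $\mu(E_p)=\tfrac1{pc}\sum_{r\in\mbb{P},\ r/p\in I}\Phi(\{cr\})$; Fourier expanding $\Phi$, the zero frequency produces the main term $\delta_p^2|I|/\log p$ by the prime number theorem, while the nonzero frequencies contribute $\ll p^{-1}\sum_{h\ne0}\min(\delta_p,|h|^{-1})^2\,\bigl|\sum_{r\in\mbb{P}\cap pI}e(hcr)\bigr|$, which is $o(\delta_p^2/\log p)$ because $c$ is irrational. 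The latter bound rests on Vinogradov's estimate $\sum_{r\le X}\Lambda(r)e(\theta r)\ll_{\eta}(Xq^{-1/2}+X^{4/5}+X^{1/2}q^{1/2})X^{\eta}$ (for $\theta$ near a reduced fraction $a/q$), averaged over $h$ so that only the few $h$ with $hc$ near a small-denominator rational contribute appreciably; the exponent $4/5$ here is exactly what makes $\delta_p=p^{-1/5+\varepsilon}$ admissible. Hence $\mu(E_p)\sim\delta_p^2|I|/\log p$, which is not summable over primes, so $\sum_p\mu(E_p)=\infty$.

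\textbf{Second moment --- the main obstacle.} Let $p_1<p_2\le N$. If $\alpha\in E_{p_1}\cap E_{p_2}$ via primes $r_1,r_2$, then $|r_1p_2-r_2p_1|\ll M$ with $M:=p_2\delta_{p_1}+p_1\delta_{p_2}$; the case $r_1p_2=r_2p_1$ forces $r_i=p_i$, hence $\alpha$ near the excluded point $1$, so in fact $0<|r_1p_2-r_2p_1|\ll M$. Since $\alpha$ lies within $\delta_{p_i}/p_i$ of $r_i/p_i$, the condition $\{p_ic\alpha\}\le\delta_{p_i}$ forces $\|cr_i\|\ll_c\delta_{p_i}$, and then $\alpha$ ranges over a subset of $J_{p_1,r_1}\cap J_{p_2,r_2}$ of measure $\le\delta_{p_2}/p_2$. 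Summing over the finitely many relevant pairs,
\[
\mu(E_{p_1}\cap E_{p_2})\ \ll\ \frac{\delta_{p_2}}{p_2}\,\mc{N}(p_1,p_2)\ +\ (\text{error terms}),
\]
where $\mc{N}(p_1,p_2)$ counts pairs of primes $(r_1,r_2)$ with $0<|r_1p_2-r_2p_1|\ll M$, $\|cr_1\|\ll\delta_{p_1}$, $\|cr_2\|\ll\delta_{p_2}$, and the error terms arise from the nonzero Fourier frequencies of the indicators of $\{p_ic\alpha\}\le\delta_{p_i}$. The crux is to show that the entire right side, summed over all $p_1<p_2\le N$ --- the diagonal $p_1=p_2$ contributing only $\sum_p\mu(E_p)$, of strictly smaller order --- is $\ll\bigl(\sum_{p\le N}\mu(E_p)\bigr)^2$. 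For the main term one estimates $\mc{N}(p_1,p_2)$ by combining the equidistribution of $(cr)$ along primes (again Vinogradov) for the $\|cr_i\|$ constraints with prime counting in short intervals and residue classes (Brun--Titchmarsh, Huxley) for the near-collinearity constraint, and one must separate the ranges of $p_1,p_2$ --- notably the ``nearly equal denominators'' range, where the intervals $\{r_2:|r_1p_2-r_2p_1|\ll M\}$ have length less than $1$ --- checking that all logarithmic factors match. For the error terms one further expands each $g_i(\alpha)=\sum_{r_i\in\mbb{P}}\mathbf 1_{J_{p_i,r_i}}(\alpha)$ and is reduced to bilinear exponential sums over primes with frequencies of the shape $(h_1+h_2p_2/p_1)c$, estimated once more by Vinogradov's method --- this is where the threshold $-1/5+\varepsilon$ is genuinely dictated. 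I expect this two-parameter bookkeeping and the matching of logarithms to be by far the most delicate part of the argument. With (i) and (ii) established, the Borel--Cantelli argument of the first paragraph, carried out on every subinterval, gives $\mu(\limsup_pE_p)=\mu(I)$ and hence the theorem.
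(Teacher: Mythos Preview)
The theorem you tackle is not proved in this paper; it is quoted from \cite{BG1}. The present paper proves the Gaussian analogue (Theorem~\ref{complex}) by the same scheme, so the comparison below is with the method of \cite{BG1} as recapitulated in Sections~2, 4 and~5 here.

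Your route is genuinely different from theirs. You propose a second-moment/Erd\H{o}s--G\'al argument on the family $\{E_p\}$. The paper instead works with the cumulative counting function $F_N(\alpha)=\sharp\{(p,q,r):p\le N,\ \text{\eqref{simultan} holds}\}$ and feeds it into the metrical Lemma~\ref{metric} (a Harman--Jones density lemma). That lemma requires two inputs: (i) a lower bound for $\int F_N$ over every box, obtained precisely by your first-moment mechanism (the asymptotic for primes $p$ with $\|pc\|\le\delta$, i.e.\ Theorem~\ref{Vaughan} here); and (ii) a \emph{pointwise} upper bound $F_N(\alpha)\le KG_N+J_N(\alpha)$ with $\int|J_N|=o(G_N)$, obtained by applying a two-dimensional Selberg upper-bound sieve (Lemma~\ref{uppersieve}) to the multiset $\{n\cdot f(n\alpha)\}$ and then controlling the sieve remainder in $L^1$ via Fourier expansion and the elementary integral bound in Lemma~\ref{av}. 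The crucial structural point is that step~(ii) \emph{replaces} your second moment: instead of estimating correlations $\mu(E_{p_1}\cap E_{p_2})$, one bounds $F_N$ from above uniformly in $\alpha$ up to an $L^1$-small defect, and no pair $(p_1,p_2)$ of denominators ever appears.

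This matters because your second moment, as sketched, has a real gap. To bound $\mathcal N(p_1,p_2)$ you must simultaneously control primality of $r_1,r_2$, the near-collinearity $|r_1p_2-r_2p_1|\ll M$, and the two Diophantine constraints $\|cr_i\|\ll\delta_{p_i}$. These are not independent: once $r_1p_2-r_2p_1=m$ is fixed, $r_2$ is determined by $r_1$ in a residue class mod $p_1$, and the pair $(\|cr_1\|,\|cr_2\|)$ is governed by the single prime variable $r_1$ through two incommensurate linear forms $cr_1$ and $c(r_1p_2-m)/p_1$. Extracting the full saving $\delta_{p_1}\delta_{p_2}$ on top of the sieve loss for the prime pair, uniformly over all $(p_1,p_2,m)$, is a two-frequency exponential-sum problem over primes that is not covered by a single appeal to Vinogradov plus Brun--Titchmarsh; it is also the place where the exponent $-1/5$ must be re-earned rather than inherited from the first moment. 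You flag this as ``by far the most delicate part'' but supply no mechanism. The paper's sieve/$L^1$ route avoids the difficulty entirely: the upper-bound sieve costs only a constant factor $K$, and the $L^1$ estimate for the remainder reduces, after Fourier expansion, to integrating quantities of the shape $\min(\|\beta\alpha\|^{-1},Y)$ over $\alpha$ --- a one-variable calculation with no prime-pair correlations in sight.
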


Here the condition on $q$ is relaxed, i.e., we allow $q$ to be an integer, but we get a better exponent of $1/5$ in place of $1/6$.

Our approach builds on that of Harman and Jones \cite{HJ}, but we use exponential sums with prime variables instead of zero density
estimates at a particular point to get the argument work for lines. 
In \cite{BG2}, we extended this result to lines in higher dimensional spaces. 

It is very interesting to consider the same circle of problems in the setting of number fields. Indeed, a number field version of Khinchin's 
theorem was proved by D. Cantor \cite{Can}. A new proof for Khinchin's theorem in the classical case as well as the case of imaginary 
quadratic number fields was given by D. Sullivan \cite{Sul} using geodesic flows. However, it seems that number field versions of results of 
this type with prime restricitions and Diophantine approximation on general manifolds in the number field setting didn't receive much attention so far.
In this paper, we make a step into this direction by considering 
Diophantine approximation on lines in $\mathbb{C}^2$, where we restrict numerators and denominators to Gaussian primes. What we prove is
an analog of Theorem \ref{real} in the setting of the number field $\mathbb{Q}(i)$. It is likely that the method could be extended to imaginary
quadratic number fields in general, but we confine our investigation to the simplest case, which requires a large amount of
extra work and new arguments already. Our result is the following. 

\begin{theorem} \label{complex} Let $\varepsilon>0$ and let $c\in \mathbb{C}\setminus\mathbb{Q}(i)$. Then for almost all $\alpha\in \mathbb{C}$, 
with respect to the Lebesgue measure, there are infinitely many triples $(p,q,r)$ with $p$ and $r$ Gaussian primes and $q$ a Gaussian integer such that
\begin{equation} \label{simultan}
|p\alpha - r| \le |p|^{-1/12+\varepsilon} \quad \mbox{and} \quad |pc\alpha - q| \le |p|^{-1/12+\varepsilon}.
\end{equation}
\end{theorem}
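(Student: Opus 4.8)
The plan is to realise the set of admissible $\alpha$ as a $\limsup$ set and to prove it has full Lebesgue measure by showing it occupies a positive proportion of every disk; by the Lebesgue density theorem this suffices. Fix a disk $D\subset\mathbb{C}$ on which $|\alpha|\asymp 1$, and for a Gaussian prime $p$ put
\begin{equation*}
E_p=\left\{\alpha\in D:\ \exists\ \text{Gaussian prime }r\text{ and Gaussian integer }q\text{ with }|p\alpha-r|\le|p|^{-1/12+\varepsilon}\text{ and }|pc\alpha-q|\le|p|^{-1/12+\varepsilon}\right\}.
\end{equation*}
Theorem~\ref{complex} is equivalent to $\mu\bigl(\limsup_{|p|\to\infty}E_p\cap D\bigr)\gg\mu(D)$ with an implied constant independent of $D$. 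I would deduce this from a divergence-type Borel--Cantelli lemma (a Chung--Erd\H{o}s / Paley--Zygmund argument carried out inside $D$): it suffices to establish, for $|p|$ in a dyadic range $[P,2P]$, both the first-moment lower bound $\sum_{|p|\asymp P}\mu(E_p\cap D)\to\infty$ as $P\to\infty$ and the quasi-independence estimate $\sum_{|p|,|p'|\asymp P}\mu(E_p\cap E_{p'}\cap D)\ll\mu(D)^{-1}\bigl(\sum_{|p|\asymp P}\mu(E_p\cap D)\bigr)^2$.

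The geometry of $E_p$ turns both requirements into counting problems about Gaussian primes. For fixed $(r,q)$ the set of corresponding $\alpha$ is essentially a disk of radius $\asymp|p|^{-13/12+\varepsilon}$; these disks are disjoint as $r$ varies, since the centres $r/p$ are $\asymp|p|^{-1}$-separated, and $q$ is forced to be the Gaussian integer nearest to $cr$, which lies in the admissible range exactly when $\|cr\|\le C|p|^{-1/12+\varepsilon}$, where $\|\cdot\|$ denotes distance to $\mathbb{Z}[i]$ on $\mathbb{C}/\mathbb{Z}[i]$. Hence
\begin{equation*}
\mu(E_p\cap D)\asymp|p|^{-13/6+2\varepsilon}\,N_p(D),\qquad N_p(D)=\#\bigl\{r\ \text{Gaussian prime}:\ r/p\in D,\ \|cr\|\le C|p|^{-1/12+\varepsilon}\bigr\},
\end{equation*}
and the first moment follows once one knows that the fractional parts of $cr$, with $r$ a Gaussian prime, equidistribute well enough to give $N_p(D)\gg\mu(D)\,|p|^{-1/6+2\varepsilon}\,|p|^2/\log|p|$, the expected order of magnitude. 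This is precisely the $\mathbb{Q}(i)$-analog of the classical theory of small fractional parts of $p\gamma$. Similarly, $\alpha\in E_p\cap E_{p'}$ with $p\ne p'$ forces two disks of radius $\asymp|p|^{-13/12+\varepsilon}$ centred at $r/p$ and $r'/p'$ to overlap, i.e. $0<|rp'-r'p|\ll|p|^{11/12+\varepsilon}$ with $r,r'$ Gaussian primes obeying $\|cr\|,\|cr'\|\le C|p|^{-1/12+\varepsilon}$; the diagonal case $rp'=r'p$ is negligible by unique factorisation in $\mathbb{Z}[i]$, and the remaining quadruples $(p,p',r,r')$ must be counted with the correct main term, which is a further, bilinear, equidistribution statement.

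The common engine is an estimate for exponential sums over Gaussian primes,
\begin{equation*}
S(\beta)=\sum_{\substack{r\ \text{Gaussian prime}\\ |r|\le X}}e\bigl(\mathrm{Re}(\overline{\beta}\,r)\bigr)\qquad(\beta\in\mathbb{C}),
\end{equation*}
which I would prove by transplanting Vaughan's identity and the treatment of type~I and type~II sums to $\mathbb{Z}[i]$. As $c$ is a fixed element of $\mathbb{C}\setminus\mathbb{Q}(i)$, a Dirichlet-type pigeonhole argument in $\mathbb{Z}[i]$ places each relevant $\beta=\xi c$ (with $0<|\xi|\le|p|^{1/12}$) either on a minor arc, where Vaughan's identity yields a power saving $|S(\xi c)|\ll X^{2-\eta}$ that beats the main term produced by the Fourier expansion detecting $\|cr\|\le|p|^{-1/12+\varepsilon}$, or close to a Gaussian rational of small denominator, which one handles by the corresponding major-arc asymptotic and which, since $c\notin\mathbb{Q}(i)$, can be arranged to be absent for infinitely many dyadic ranges of $|p|$ --- and it is those ranges that feed the $\limsup$. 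The two-dimensional geometry of $\mathbb{Z}[i]$ weakens the achievable saving relative to the one-dimensional case, and the exponent $1/12$ in Theorem~\ref{complex} is exactly the largest for which the resulting bound, once passed through the metric machinery above --- in particular through the four-variable correlation count behind the quasi-independence estimate --- still works. Establishing these Gaussian-prime exponential sum estimates with sufficient uniformity in $\beta$, correctly over $\mathbb{Z}[i]$ with its units and planar geometry and strong enough for both moments, is the main obstacle.

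With these inputs the proof assembles quickly. The equidistribution gives $\sum_{|p|\asymp P}\mu(E_p\cap D)\gg\mu(D)\,P^{5/3+O(\varepsilon)}/(\log P)^2\to\infty$, settling the first moment; the correlation count (equivalently, a large sieve or dispersion argument over $\mathbb{Z}[i]$) gives the matching upper bound for $\sum_{|p|,|p'|\asymp P}\mu(E_p\cap E_{p'}\cap D)$. Feeding both into the divergence Borel--Cantelli lemma yields $\mu\bigl(\limsup_{|p|\to\infty}E_p\cap D\bigr)\gg\mu(D)$ with an absolute implied constant, and, $D$ being arbitrary, the Lebesgue density theorem forces $\limsup_{|p|\to\infty}E_p$ to have full Lebesgue measure. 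This proves Theorem~\ref{complex}.
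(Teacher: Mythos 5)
Your outline captures the right skeleton---realize the set of admissible $\alpha$ as a $\limsup$ set, show it occupies a positive proportion of every disk, deduce full measure by Lebesgue density, and feed the problem into a first-moment estimate for Gaussian primes $r$ with $\|cr\|$ small, proved by a type~I/type~II decomposition. That part matches the paper closely (the paper uses Harman's sieve for $\mathbb{Z}[i]$ rather than Vaughan's identity, but both give the same asymptotic range). The divergence, and the genuine gap, is in how the upper-bound half is handled.

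You propose a Chung--Erd\H{o}s / second-moment route, which requires the quasi-independence estimate $\sum_{p,p'}\mu(E_p\cap E_{p'}\cap D)\ll\mu(D)^{-1}\bigl(\sum_{p}\mu(E_p\cap D)\bigr)^2$. You reduce this to counting quadruples $(p,p',r,r')$ of Gaussian primes with $0<|rp'-r'p|\ll|p|^{11/12+\varepsilon}$ and $\|cr\|,\|cr'\|\le C|p|^{-1/12+\varepsilon}$ with a matching main term, and you explicitly flag this ``bilinear equidistribution statement'' as the main obstacle. It is indeed unresolved: a four-variable correlation with two Gaussian primes and a Diophantine constraint has a different analytic structure from the one-variable first moment, and there is no reason offered that the same exponent $1/12$ survives. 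In the paper, $1/12$ is forced by the type~I/type~II balance in Harman's sieve over $\mathbb{Z}[i]$ (with $x=|q|^{12}$ for Hurwitz continued-fraction denominators $q$ of $c$, the type~II estimate being the bottleneck), not by any correlation count, so the numerology of your step would have to be checked from scratch.

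The paper avoids second moments entirely. Its metrical Lemma~\ref{metric} (following Harman--Jones) replaces quasi-independence by the condition that for some constant $K$ one has $\int_{\mathcal{C}}F_N\le KG_N\mu(\mathcal{C})+V_N$ for every measurable $\mathcal{C}$, with $V_N=o(G_N)$; equivalently, a pointwise upper bound $F_N(\alpha)\le KG_N+J_N(\alpha)$ with $J_N$ small in $L^1$-average. This is obtained not by analysing overlaps $E_p\cap E_{p'}$ but by a two-dimensional Selberg-type upper-bound sieve in $\mathbb{Z}[i]$ (Lemma~\ref{uppersieve}): for each dyadic block of $|n|$, bound the number of $n$ with $\|n\alpha\|,\|nc\alpha\|\le(P/2)^{\varepsilon-1/12}$ for which $n$ and the nearest Gaussian integer $f(n\alpha)$ are both Gaussian primes; then control the sieve error in $L^1$-average over $\alpha$ by Fourier analysis, linear exponential sum bounds over $\mathbb{Z}[i]$, and the spacing/continued-fraction estimates from Section~3 (via Lemma~\ref{av}). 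This is a first-moment estimate on the error term, not a second-moment estimate, and it is exactly what makes the exponent $1/12$ work end-to-end. So the needed fix is not to push harder on the correlation count but to replace your quasi-independence step with the upper-bound-sieve mechanism, which produces the precise hypothesis the metrical lemma actually consumes.
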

 
The structure of our proof resembles that of Theorem \ref{real}, but the technical details are more
involved. In particular, we need to develop results on Diophantine approximation of numbers in $\mathbb{C}\setminus \mathbb{Q}(i)$
by fractions of Gaussian integers with Gaussian prime denominators in sectors of the complex plane, 
which is the content of section 3. As a by-product, we prove the following result on Diophantine approximation with Gaussian primes.

\begin{theorem} \label{coro} Let $c$ be a complex number such that $c\not\in \mathbb{Q}(i)$, $\varepsilon>0$ be an arbitrary constant and $-\pi\le \omega_1<
\omega_2\le \pi$. Then there exist infinitely many Gaussian primes $p$ such that
$$
\max\{||\Re(pc)||,||\Im(pc)||\}\le |p|^{-1/12+\varepsilon},
$$
where $||z||$ denotes the distance of the real number $z$ to the nearest rational integer.
\end{theorem}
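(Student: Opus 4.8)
The plan is to deduce Theorem~\ref{coro} from a Vinogradov--Vaughan type bound for exponential sums over Gaussian primes in a sector, twisted by an additive character, exactly as in the treatment of small fractional parts of $p\gamma$ and of its real-line analog in \cite{BG1}; I include the sector constraint $\omega_1\le\arg p\le\omega_2$ in the conclusion, which is the form needed elsewhere in the paper. Throughout, write $e(t)=e^{2\pi i t}$, and for $\beta=h+ik\in\Z[i]$ use the identity $h\,\Re(pc)+k\,\Im(pc)=\Re(\overline{\beta}\,pc)=\Re(\overline{p}\,\beta\overline{c})$, so that a two-dimensional additive character at $(\Re(pc),\Im(pc))$ is a single additive character at $p$ with frequency $\beta\overline{c}$.

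First I would reduce the problem to an exponential sum estimate. Fix a large scale $X$, set $\delta=X^{-1/12+\varepsilon}$, $H\asymp\delta^{-1}$, and let $N(X)$ count the Gaussian primes $p$ with $|p|\le X$, $\omega_1\le\arg p\le\omega_2$ and $\max\{\|\Re(pc)\|,\|\Im(pc)\|\}\le C\delta$ for a suitable absolute constant $C$ (the passage to $|p|^{-1/12+\varepsilon}$ being routine after shrinking $\varepsilon$); it suffices to show $N(X_j)\to\infty$ along some sequence $X_j\to\infty$. Consider the nonnegative quantity $W=\sum_{p}\Phi_H(\Re(pc))\,\Phi_H(\Im(pc))$, where $\Phi_H$ is the Fej\'er kernel of degree $H$ and $p$ runs over the Gaussian primes in the sector with $|p|\le X$. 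Expanding $\Phi_H\otimes\Phi_H$ in a Fourier series and isolating the frequency $\beta=0$ gives $W=\Pi(X)+\sum_{0<|\beta|\ll H}w_\beta S(\beta)$, where $\Pi(X)\asymp X^2/\log X$ is the number of such primes, $0\le w_\beta\le1$, and
\[
S(\beta)=\sum_{\substack{p\ \text{Gaussian prime}\\|p|\le X,\ \omega_1\le\arg p\le\omega_2}}e\big(\Re(\overline{p}\,\beta\overline{c})\big).
\]
A standard tail estimate for the (smoothed) Fej\'er kernels then shows that if $\sum_{0<|\beta|\ll H}|S(\beta)|=o(X^2/\log X)$, the mass of $W$ is carried by the primes with $\max\{\|\Re(pc)\|,\|\Im(pc)\|\}\le C\delta$, whence $N(X)\gg \Pi(X)/H^2\asymp \delta^2X^2/\log X=X^{11/6+2\varepsilon}/\log X\to\infty$.

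The key input I would establish is a Gaussian-prime analog of Vinogradov's bound: for $\nu\in\mathbb{C}$ and a reduced fraction $a/q$, $a,q\in\Z[i]$, with $|\nu-a/q|\le|q|^{-1}X^{-1}$,
\[
|S_\nu|:=\Big|\sum_{\substack{p\ \text{Gaussian prime}\\|p|\le X,\ \omega_1\le\arg p\le\omega_2}}e\big(\Re(\overline{p}\,\nu)\big)\Big|\ll X^{2+\varepsilon}\Big(|q|^{-1/2}+X^{-1/6}+|q|^{1/2}X^{-1}\Big),
\]
which is $O(X^{11/6+\varepsilon})$ whenever $|q|$ lies in a middle range; the intrinsic term $X^{-1/6}$ --- hence the saving $X^{1/6}$, hence the exponent $1/12$ in the theorem --- stems from the Type~II range and reflects the two-dimensional geometry of the Type~I sums together with the sector restriction. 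To prove it I would write the indicator of Gaussian primes through the von Mangoldt function $\Lambda_{\Q(i)}$ on ideals, apply Vaughan's (or Heath--Brown's) identity over $\Z[i]$, estimate the resulting Type~I sums $\sum_{|m|\le M}a_m\sum_n e(\Re(\overline{mn}\,\nu))$ by executing the inner sum over the Gaussian integers $n$ in the convex region $\{|mn|\le X,\ \arg(mn)\in[\omega_1,\omega_2]\}$ as a linear exponential sum over lattice points (bounded by the minimum of the area and the reciprocal distance of $\overline{m}\nu$ to $\Z[i]$), and estimate the Type~II sums $\sum_{|m|\asymp M}\sum_{|n|\asymp N}a_m b_n e(\Re(\overline{mn}\,\nu))$ by Cauchy--Schwarz in $m$ followed by a large-sieve inequality for $\Z[i]$. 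The sector condition $\omega_1\le\arg p\le\omega_2$ I would carry along by expanding the indicator of the arc into a rapidly convergent series of Hecke Gr\"ossencharacters of $\Q(i)$ of angular type, reducing matters to the doubly twisted sums $\sum_p\lambda^{j}(p)e(\Re(\overline{p}\,\nu))$ with mild dependence on the index $j$.

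Applying this with $\nu=\beta\overline{c}$ and summing over the $\asymp\delta^{-2}$ frequencies $\beta$ with $0<|\beta|\ll H$ yields the bound $o(X^2/\log X)$ demanded in the second paragraph, \emph{provided} that at scale $X$ each such $\beta\overline{c}$ has a rational approximation with denominator in the middle range. This proviso is where the Diophantine-approximation results of Section~3 enter, and it is the main obstacle: since $c\notin\Q(i)$ every $\beta\overline{c}$ is irrational, but for Liouville-type $c$ the crude sum $\sum_\beta|S(\beta)|$ is too lossy at exceptional scales, because $\beta\overline{c}$ can lie very close to a Gaussian rational of small denominator. To circumvent this I would, using the continued-fraction-like theory of approximation of complex numbers by fractions of Gaussian integers with Gaussian-prime denominators developed in Section~3, choose the sequence $X_j\to\infty$ so that at each $X_j$ all the relevant $\beta\overline{c}$ are suitably well-distributed --- with only $O(1)$ exceptional $\beta$, handled on major arcs by the classical asymptotic for additive characters over primes in residue classes of $\Z[i]$ together with a spacing argument --- and then run the preceding argument along $(X_j)$. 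The other genuinely hard step is the Type~II estimate: it must extract cancellation from a bilinear form over $\Z[i]$ carrying both the additive and the angular twist, uniformly in $\nu$, and optimizing the Type~I/Type~II ranges under this constraint is exactly what pins the exponent to $1/12$ and explains the loss against the real-line exponent $1/5$ of \cite{BG1}.
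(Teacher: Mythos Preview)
Your overall architecture---detect the condition by harmonic analysis, reduce to Type~I and Type~II bilinear sums, choose the scale via a Hurwitz convergent of $c$---matches the paper's, but the order of operations you propose creates a genuine gap. You Fourier-expand first and then apply Vaughan to each $S(\beta)$ separately; this forces you to control, for \emph{every} $\beta$ with $0<|\beta|\ll H$, the Dirichlet denominator $q_\beta$ of $\beta\overline{c}$ at scale $X$, and to have $|q_\beta|$ land in the middle range where your Vinogradov-type bound saves $X^{1/6}$. Your claim that only $O(1)$ exceptional $\beta$ fail this is unsubstantiated: the Hurwitz convergent $a/q$ of $c$ yields only $|\beta\overline{c}-\beta\overline{a}/\overline{q}|\le|\beta|/|q|^2$, which is far too weak to meet the hypothesis $|\beta\overline{c}-a_\beta/q_\beta|\le|q_\beta|^{-1}X^{-1}$ of your bound, so $q_\beta$ is in no way determined by $q$ and nothing prevents many $\beta\overline{c}$ from lying near Gaussian rationals of small denominator. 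Moreover, your appeal to ``the continued-fraction-like theory of approximation of complex numbers by fractions of Gaussian integers with Gaussian-prime denominators developed in Section~3'' is circular: Section~3's \emph{tool} is the ordinary (non-prime) Hurwitz continued fraction, and its \emph{conclusion} is precisely Theorem~\ref{coro}.

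The paper avoids this obstruction by reversing the order. It applies Harman's sieve for $\Z[i]$ (Theorem~\ref{Harsie}) to the pair $A=\{n:\|nc\|\le\delta,\ \omega_1<\arg n\le\omega_2,\ x_1<\mathcal{N}(n)\le x_2\}$ and $B$ (the same without the $\|nc\|$ constraint), and only \emph{inside} the resulting Type~I and Type~II sums does it Fourier-expand the indicator of $\|nc\|\le\delta$ via Vaaler's lemma. The Fourier variable $j\in\Z[i]$ then sits next to the sieve variable: after bounding the inner linear sum (Type~I) one sets $l=jm$, and after Cauchy--Schwarz (Type~II) one sets $n=j(n_1-n_2)$; in either case the spacing of the points $lc$ (resp.\ $nc$) modulo $\Z[i]$ is controlled by the \emph{single} Hurwitz approximant $a/q$ of $c$ through the counting estimate \eqref{plug}, and the choice $x_2=|q|^{12}$ balances all terms to give Theorem~\ref{approx}, from which Theorem~\ref{coro} follows at once. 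The sector condition $\omega_1<\arg(mn)\le\omega_2$ is carried directly through the linear sums by an elementary slicing argument (see \eqref{m_1m_2}); no Hecke characters are needed.
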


We note that $\max\{||\Re(pc)||,||\Im(pc)||\}$ measures the distance of $pc$ to the nearest Gaussian integer in the sense of the supremum norm.
Thus, Theorem \ref{coro} states that this distance is infinitely often very small as $p$ runs over the Gaussian primes and 
$c\in \mathbb{C}\setminus\mathbb{Q}(i)$ is fixed. Similar results have been proved for primes in $\mathbb{Q}$ (see \cite{Mat}, for example).

A complex analog of Theorem \ref{HaJo} for $\tau\in \mathbb{N}\setminus \{1\}$, in particular the case $\tau=2$ of the complex parabola, would 
certainly be a very interesting problem to consider as well. \\   

{\bf Conventions.} 
(I) Throughout the sequel, we shall assume that $0<|c|\le 1$ in Theorem \ref{complex}. The case $|c|>1$ can be treated similarly, by minor modifications of the method.\\ 
(II) Throughout this paper, we follow the usual convention that $\varepsilon$ is a small enough positive real number.\\

{\bf Acknowledgement.} The author would like to thank Prof. Anish Ghosh for useful discussions about this topic at and after a pleasant stay
at the Tata Institute in Mumbai in August 2016. He would further like to thank the anonymous referee for useful comments on a first version
of this paper which greatly helped to make this paper self-contained.

\section{A Metrical approach}
Our basic approach for a proof of Theorem \ref{complex} is an extension of that in \cite[section 2]{BG1} (see also \cite[section 2]{BG2}) and has its origin in \cite{HJ}. We
first establish the metrical lemma below. Our proof follows closesly the arguments in \cite[Proof of Lemma 1]{HJ}.
Throughout the sequel, we denote by $\mu(\mathcal{C})$ the Lebesgue measure of a measurable set $\mathcal{C}\subseteq \mathbb{C}$ and we write
$$
D(a,b):=\left\{z\in \mathbb{C}\ :\ a< |z|\le b\right\}
$$
and 
$$
D(a,b,\gamma_1,\gamma_2):=\left\{Re^{i\theta}\ :\ a< R\le b,\ \gamma_1< \theta\le \gamma_2\right\}.
$$ 

\begin{lemma} \label{metric}
Let $\mathcal{S}$ be a subset of the positive integers. Assume that $A$ and $B$ are reals such that $0<A<B$ and let $\mathcal{M}:=D(A,B)$.
Let $F_N(\alpha)$ be a non-negative real-valued function of $N$, an element of $\mathcal{S}$,
and $\alpha$, a complex number. Let further $G_N$ and $V_N$ be real-valued functions of $N\in \mathcal{S}$ such that the following 
conditions \eqref{butzel}, \eqref{VNbound}, \eqref{FNint}, \eqref{constantK} below hold. 

\begin{equation} \label{butzel}
G_N \rightarrow \infty \quad \mbox{as } N\in \mathcal{S} \mbox{ and } N\rightarrow\infty.
\end{equation}

\begin{equation} \label{VNbound}
V_N=o\left(G_N\right) \quad \mbox{as } N\in \mathcal{S} \mbox{ and } N\rightarrow\infty. 
\end{equation}

\begin{equation} \label{FNint}
\begin{cases}
\mbox{For all } a,b,\gamma_1,\gamma_2 \mbox{ with } A\le a<b\le B \mbox{ and } -\pi\le \gamma_1<\gamma_2\le \pi \mbox{ we have}\\
\limsup\limits\limits_{\substack{N\in \mathcal{S}\\ N\rightarrow \infty}} \int\limits_{\gamma_1}^{\gamma_2} \int\limits_a^b 
\frac{F_N\left(Re^{i\theta}\right)}{G_N} \ dR\ d\theta \ge (\gamma_2-\gamma_1)\left(b^2-a^2\right).
\end{cases}
\end{equation}

\begin{equation} \label{constantK}
\begin{cases}
\mbox{There is a positive constant } K \mbox{ such that, for any measurable set } \mathcal{C}\subseteq\mathcal{M} \\
\mbox{ and any } N\in \mathcal{S},\ 
\int\limits_{\mathcal{C}} F_N\left(Re^{i\theta}\right) \ dR\ d\theta\le KG_N\mu(\mathcal{C})+V_N.
\end{cases}
\end{equation}
Then for almost all $\alpha\in \mathcal{M}$, we have 
$$
\limsup\limits_{\substack{N\in \mathcal{S}\\ N\rightarrow\infty}} \frac{F_N(\alpha)}{G_N}\ge 1. 
$$
\end{lemma}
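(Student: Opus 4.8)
The plan is to adapt the classical "divergence Borel–Cantelli" / density argument for metrical Diophantine approximation, in the form used by Harman and Jones, to the complex setting. Define
\[
\mathcal{B}:=\left\{\alpha\in\mathcal{M}\ :\ \limsup_{\substack{N\in\mathcal{S}\\ N\to\infty}}\frac{F_N(\alpha)}{G_N}<1\right\},
\]
and suppose for contradiction that $\mu(\mathcal{B})>0$. Then $\mathcal{B}=\bigcup_{j\ge 1}\mathcal{B}_j$ where
\[
\mathcal{B}_j:=\left\{\alpha\in\mathcal{M}\ :\ F_N(\alpha)\le (1-1/j)G_N\ \text{for all } N\in\mathcal{S}\ \text{with } N\ge j\right\},
\]
so there is some $j$ with $\mu(\mathcal{B}_j)>0$. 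Fix such a $j$ and write $\eta:=1/j\in(0,1]$, so that $F_N(\alpha)\le(1-\eta)G_N$ for all $\alpha\in\mathcal{B}_j$ and all sufficiently large $N\in\mathcal{S}$.

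The core of the argument is a Lebesgue-density step. By the Lebesgue density theorem applied in $\mathbb{C}\cong\mathbb{R}^2$, almost every point of $\mathcal{B}_j$ is a density point; choosing one such point and working in polar coordinates, one can find a small polar rectangle $\mathcal{R}:=D(a,b,\gamma_1,\gamma_2)$ with $A\le a<b\le B$, $-\pi\le\gamma_1<\gamma_2\le\pi$, on which $\mathcal{B}_j$ has density close to $1$, say
\[
\mu\bigl(\mathcal{R}\setminus\mathcal{B}_j\bigr)\le \delta\,\mu(\mathcal{R})
\]
with $\delta$ to be chosen small. Here one must be a little careful because the map $(R,\theta)\mapsto Re^{i\theta}$ distorts Lebesgue measure by the factor $R$, but since $0<A\le R\le B$ this factor is bounded above and below, so density points in the $(R,\theta)$-coordinates and in $\mathbb{C}$ coincide up to harmless constants; I would phrase the density statement directly in terms of the measure $dR\,d\theta$ to match the hypotheses \eqref{FNint} and \eqref{constantK}.

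Now integrate $F_N/G_N$ over $\mathcal{R}$ and split according to $\mathcal{B}_j$. On $\mathcal{R}\cap\mathcal{B}_j$ we use the pointwise bound $F_N(\alpha)\le(1-\eta)G_N$ valid for large $N\in\mathcal{S}$; on $\mathcal{R}\setminus\mathcal{B}_j$ we use \eqref{constantK} with $\mathcal{C}=\mathcal{R}\setminus\mathcal{B}_j$, getting a contribution $\le K(\mathcal{R}\setminus\mathcal{B}_j)$-measure times $G_N$ plus $V_N$. This yields, for all large $N\in\mathcal{S}$,
\[
\int_{\gamma_1}^{\gamma_2}\int_a^b\frac{F_N(Re^{i\theta})}{G_N}\,dR\,d\theta
\le (1-\eta)\,\mu(\mathcal{R})+K\,\mu\bigl(\mathcal{R}\setminus\mathcal{B}_j\bigr)+\frac{V_N}{G_N}
\le \bigl((1-\eta)+K\delta\bigr)\mu(\mathcal{R})+\frac{V_N}{G_N}.
\]
Taking $\limsup$ over $N\in\mathcal{S}$ and using \eqref{VNbound} (so $V_N/G_N\to 0$) together with \eqref{butzel} makes the last term vanish, and then \eqref{FNint} applied to $a,b,\gamma_1,\gamma_2$ gives $(\gamma_2-\gamma_1)(b^2-a^2)\le\bigl((1-\eta)+K\delta\bigr)\mu(\mathcal{R})$. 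Since $\mu(\mathcal{R})=(\gamma_2-\gamma_1)(b^2-a^2)$ exactly (the Lebesgue measure of the sector $D(a,b,\gamma_1,\gamma_2)$ is $\tfrac12(\gamma_2-\gamma_1)(b^2-a^2)\cdot 2$, i.e.\ $(\gamma_2-\gamma_1)(b^2-a^2)$ in the $dR\,d\theta$ normalization used here), we obtain $1\le(1-\eta)+K\delta$, which is false once $\delta<\eta/K$. This contradiction shows $\mu(\mathcal{B})=0$, proving the lemma.

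I expect the main obstacle to be purely bookkeeping rather than conceptual: making sure the density-point selection delivers an honest polar rectangle $D(a,b,\gamma_1,\gamma_2)$ (not an arbitrary small disc) so that hypothesis \eqref{FNint} — which is stated only for such rectangles — can be invoked, and keeping the normalization of $\mu$ versus $dR\,d\theta$ consistent throughout, since the factor $R$ from the Jacobian is exactly what makes the measure of $D(a,b,\gamma_1,\gamma_2)$ come out as $(\gamma_2-\gamma_1)(b^2-a^2)$ and hence match the right-hand side of \eqref{FNint}. Everything else is the standard Erdős–Khinchin density trick transplanted to $\mathbb{C}$.
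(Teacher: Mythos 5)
Your proposal is correct and follows essentially the same contradiction-by-Lebesgue-density argument as the paper. The one technical refinement is that by decomposing the exceptional set into the sets $\mathcal{B}_j$ (on which $F_N\le(1-1/j)G_N$ uniformly for all $N\in\mathcal{S}$ with $N\ge j$) you obtain a genuine pointwise bound before integrating, which sidesteps the reverse-Fatou-type step that the paper passes over silently when it replaces $\limsup_N\int_{\mathcal{B}}H_N$ by $c\,\mu(\mathcal{B})$ starting only from $\limsup_N H_N(\alpha)\le c$ on $\mathcal{B}$.
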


\begin{proof} We write
$$
H_N(\alpha):=\frac{F_N(\alpha)}{G_N}
$$
and suppose that
$$
\limsup\limits_{\substack{N\in \mathcal{S}\\ N\rightarrow\infty}} H_N(\alpha)<1
$$
on a subset of $\mathcal{M}$ with positive measure. Then there must be a set $\mathcal{A}\subset \mathcal{M}$ with positive measure and a 
constant $c<1$ with
\begin{equation} \label{limsupbound}
\limsup\limits_{\substack{N\in \mathcal{S}\\ N\rightarrow\infty}} H_N(\alpha)\le c \quad \mbox{for all } \alpha\in \mathcal{A}.
\end{equation}
By the Lebesgue density theorem, for each $\varepsilon>0$ there are $a,b,\gamma_1,\gamma_2$ with $A\le a<b\le B$, 
$-\pi\le \gamma_1<\gamma_2\le \pi$ and 
$(\gamma_2-\gamma_1)(b^2-a^2)<1$
such that, if we put
$\mathcal{B}:=\mathcal{A}\cap \mathcal{Z}$ with $\mathcal{Z}:= D(a,b,\gamma_1,\gamma_2)$,
then 
$$
\mu(\mathcal{B})>(1-\varepsilon)\mu(\mathcal{Z})=(1-\varepsilon)(\gamma_2-\gamma_1)(b^2-a^2)
$$
and hence
$$
\mu(\mathcal{Z}\setminus \mathcal{B})< \varepsilon(\gamma_2-\gamma_1)(b^2-a^2)<\varepsilon.
$$
Now, using \eqref{constantK},
\begin{equation*}
\begin{split}
\int\limits_{\mathcal{Z}} H_N(\alpha) \ dR\ d\theta =& 
\int\limits_{\mathcal{B}} H_N(\alpha) \ dR\ d\theta + \int\limits_{\mathcal{Z}\setminus \mathcal{B}} H_N(\alpha) \ dR\ d\theta\\
\le & \int\limits_{\mathcal{B}} H_N(\alpha) \ dR\ d\theta +K\varepsilon+\frac{V_N}{G_N},
\end{split}
\end{equation*}
where $\arg(\alpha)=\theta$ and $|\alpha|=R$. So if 
\begin{equation*}
\varepsilon:=\frac{(1-c)(\gamma_2-\gamma_1)\left(b^2-a^2\right)}{2K},
\end{equation*}
then, in view of \eqref{VNbound} and \eqref{limsupbound}, it follows that
\begin{equation*}
\limsup\limits_{\substack{N\in \mathcal{S}\\ N\rightarrow\infty}}  \int\limits_{\mathcal{Z}} H_N(\alpha) \ dR\ d\theta \le
c\mu(\mathcal{B})+K\varepsilon=c(\gamma_2-\gamma_1)\left(b^2-a^2\right)+K\varepsilon<(\gamma_2-\gamma_1)\left(b^2-a^2\right).
\end{equation*}
This contradicts \eqref{FNint} and so completes the proof. 
\end{proof}

Now let $F_N(\alpha)$ be the number of solutions to \eqref{simultan} with $|p|\le N$ and for $0<A<B$ let
$$
G_N(A,B):=  C\cdot \frac{A}{B} \cdot \frac{N^{5/3+4\varepsilon}}{\log^2 N}, 
$$
where $C>0$ is a suitable constant only depending on $c$. 
In sections 4 and 5, we will prove the following.

\begin{theorem} \label{Theo}
There exists $C=C(c)>0$ and an infinite set $\mathcal{S}$ of natural numbers $N$ such that the following hold.\\ 

{\rm (i)} Let $0 < A < B$ be given. Then for all $a,b,\gamma_1,\gamma_2$ with $A\le a<b\le B$ and $-\pi\le \gamma_1<\gamma_2\le \pi$ we have
\begin{equation*}
\int\limits_{\gamma_1}^{\gamma_2} \int\limits_a^b F_N\left(Re^{i\theta}\right) \ dR\ d\theta \ge (\gamma_2-\gamma_1)\left(b^2-a^2\right)G_N(A,B)
\end{equation*}
if $N\in \mathcal{S}$ and $N$ large enough.\\

{\rm (ii)} Let $0<A<B$ be given. Then there exists a constant $K=K(A,B)$ such that, for every $\alpha\in \mathbb{C}$
with $A\le |\alpha|\le B$, we have 
$$
F_N(\alpha)\le KG_N(A,B)+J_N(\alpha)
$$
with
$$
\int\limits_{-\pi}^{\pi} \int\limits_A^B \left|J_N\left(Re^{i\theta}\right)\right| dR d\theta=o\left(G_N(A,B)\right) 
$$
if $N\in \mathcal{S}$ and $N\rightarrow \infty$. 
\end{theorem}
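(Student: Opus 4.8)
The plan is to express $F_N(\alpha)$ as a sum over Gaussian primes $p$ with $|p|\le N$ of a local counting function, and to handle the two halves of the theorem by complementary means: a direct area computation for (i), a Fourier-analytic majorisation for (ii). Throughout write $\delta_p:=|p|^{-1/12+\varepsilon}$, so that a solution of \eqref{simultan} with first entry $p$ is exactly a pair $(r,q)$ with $r$ a Gaussian prime at distance at most $\delta_p$ from $p\alpha$ and $q$ a Gaussian integer at distance at most $\delta_p$ from $pc\alpha$. The arithmetic ingredient common to both parts is the quantitative version of the distribution result for $cr\bmod\mathbb{Z}[i]$ as $r$ runs over Gaussian primes in sectors, developed in section~3 (of which Theorem~\ref{coro} is the qualitative shadow); the infinite set $\mathcal{S}$ will be taken to consist of the norms of the Gaussian rationals furnishing the good approximations to $c$ used there, exactly as in \cite{HJ} and \cite{BG1}, so that along $N\in\mathcal{S}$ the relevant counting functions attain their expected order.

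For part (i) I would argue as follows. By Fubini, $\int_{\gamma_1}^{\gamma_2}\int_a^b F_N(Re^{i\theta})\,dR\,d\theta$ equals the sum over all admissible triples $(p,q,r)$ of the Lebesgue measure of the set of $\alpha\in D(a,b,\gamma_1,\gamma_2)$ solving \eqref{simultan} with those $p,q,r$, so that only a lower bound on this total measure is needed and no disjointness has to be checked. Fix a Gaussian prime $p$ with $N/2<|p|\le N$ and let $r$ run over the Gaussian primes with $r/p$ in the interior of $D(a,b,\gamma_1,\gamma_2)$ and $\max\{||\Re(cr)||,||\Im(cr)||\}\le c_0\delta_p$ for a small absolute constant $c_0$; taking $q$ to be the nearest Gaussian integer to $cr$, one checks, using the convention $|c|\le1$, that every $\alpha$ with $|\alpha-r/p|\le\delta_p/(2|p|)$ gives a solution with these $q,r$, so this triple contributes a disc of area $\gg\delta_p^2/|p|^2$. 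By the prime number theorem for $\mathbb{Z}[i]$ in sectors together with the sector equidistribution of $cr\bmod\mathbb{Z}[i]$ from section~3, the number of admissible $r$ for such a $p$ is $\gg(\gamma_2-\gamma_1)(b^2-a^2)|p|^2\delta_p^2/\log|p|$ when $N\in\mathcal{S}$. Multiplying by $\delta_p^2/|p|^2$ and summing over the Gaussian primes $p$ with $N/2<|p|\le N$ gives a lower bound $\gg(\gamma_2-\gamma_1)(b^2-a^2)\,N^{-1/3+4\varepsilon}\cdot N^2/\log^2 N=(\gamma_2-\gamma_1)(b^2-a^2)\,N^{5/3+4\varepsilon}/\log^2 N$ (the second logarithm coming from the density of the primes $p$), which exceeds $(\gamma_2-\gamma_1)(b^2-a^2)G_N(A,B)$ once $C=C(c)$ is small enough, since $A/B\le1$.

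For part (ii) I would start from the pointwise bound $F_N(\alpha)\le\sum_{|p|\le N}\big(\#\{r\ \text{Gaussian prime}:\ |p\alpha-r|\le\delta_p\}\big)\,\mathbf{1}[\,\mathrm{dist}(pc\alpha,\mathbb{Z}[i])\le\delta_p\,]$, valid for $A\le|\alpha|\le B$. Majorise the inner count by a Selberg-sieve upper bound for Gaussian primes localised to the disc of radius $\delta_p$ about $p\alpha$ --- it is this, rather than merely ``at most one prime in the disc'', that brings the main term down to the order of $G_N$ --- and the indicator of $\mathrm{dist}(pc\alpha,\mathbb{Z}[i])\le\delta_p$ by a fixed smooth $\mathbb{Z}[i]$-periodic majorant of width $\delta_p$, then expand both in Fourier series over $\mathbb{Z}[i]$. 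The product of the two zero modes, summed over $p$, is a quantity $M_N\asymp\sum_{|p|\le N}\delta_p^4/\log|p|\asymp N^{5/3+4\varepsilon}/\log^2 N\le K(A,B)G_N(A,B)$; everything else, the contribution of the mode pairs $(m,n)\ne(0,0)$, is taken to be $J_N(\alpha)$. After a dyadic splitting $|p|\asymp P$ this is a combination of exponential sums $\sum_{P<|p|\le2P}w_p\,e_{\mathbb{C}}\big((n+cm)p\alpha\big)$ over Gaussian primes, where $e_{\mathbb{C}}$ is a fixed additive character of $\mathbb{C}$ trivial on $\mathbb{Z}[i]$, weighted by sieve and majorant coefficients $w_p$ that decay in $|m|,|n|$ and are effectively supported in $|m|,|n|\ll\delta_p^{-1}|p|^{\varepsilon}$. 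To bound $\int_{D(A,B)}|J_N(\alpha)|\,d\alpha$ one dissects the $\alpha$-plane into major and minor arcs for each mode pair: the set of $\alpha$ for which $(n+cm)\alpha$ is anomalously close to a Gaussian rational of small denominator has small measure, and there $J_N$ is estimated trivially by $\ll N^2$; off these arcs a Vinogradov-type bound for exponential sums over Gaussian primes (via the Gaussian-integer analogue of Vaughan's identity, or Hecke $L$-functions) yields a power saving over the trivial size $P^2/\log P$. The exponent $1/12$ is precisely the value at which this saving, after summation over all mode pairs and dyadic blocks and integration over $\alpha$, yields $\int_{D(A,B)}|J_N|=o(G_N(A,B))$ as $N\to\infty$ in $\mathcal{S}$.

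The main obstacle lies in part (ii), in the $L^1$-estimate for $J_N$, and two difficulties combine there. First, one needs an exponential-sum estimate over Gaussian primes that is uniform in the additive character, i.e.\ a genuinely two-dimensional Vinogradov--Vaughan bound. Second, and more seriously, the frequencies $(n+cm)\alpha$ may be anomalously close to $\mathbb{Q}(i)$ for certain mode pairs $(m,n)$ --- where the prime exponential sum is large --- so the minor-arc estimate has to be pushed through against the weights $|n+cm|^{-2}$ produced by the change of variables $\gamma=(n+cm)\alpha$, and the union of major arcs has to be shown to have negligible total measure after one sums over the roughly $N^{1/3}$ mode pairs in play. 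It is precisely in controlling this summation that the exponent $1/12$ is forced; the lower-bound input from section~3 feeding part (i) is by comparison softer, but it is there that the sparse sequence $\mathcal{S}$ enters, being governed by the good rational approximations to $c$.
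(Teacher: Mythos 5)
Your part (i) is essentially the paper's argument, modulo small bookkeeping differences (the paper splits the $p$-range into blocks $D(P,MP,\omega,\omega+2\pi/L)$ with $M=(a+b)/(2a)$ so that the sector membership of $r/p$ is automatic; you achieve the same with looser dyadic blocks). Both rest on the prime number theorem in sectors together with Corollary~\ref{signi} (the lower-bound form of the Diophantine input from section~3), and both obtain the $1/\log^2 N$ by pairing the two prime conditions on $p$ and $r$; this half is fine.

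Part (ii) is where your route genuinely diverges from the paper's, and where there is a gap. The paper does not apply a sieve ``localised to the disc of radius $\delta_p$ about $p\alpha$'' and then handle the residual sum over Gaussian primes $p$ by major/minor arcs and a Vinogradov-type bound. Instead it applies a single two-dimensional Selberg sieve (Lemma~\ref{uppersieve}) to the variable $n$ running over the annulus $P/2<|n|\le P$, \emph{jointly} detecting the primality of $f_1(n)=n$ and $f_2(n)=f(n\alpha)$ within the set $\mathcal{N}=\{n:\max(\|n\alpha\|,\|nc\alpha\|)\le\mu\}$. This is the crucial structural step: it converts both prime conditions at once into congruence conditions modulo $d_1,d_2$, so every residual exponential sum ranges over the full lattice $\mathbb{Z}[i]$ (in arithmetic progressions), not over Gaussian primes. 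Those sums are controlled by the elementary linear-exponential-sum bound \eqref{lin}, and the $L^1$ average in $\alpha$ is handled not by a major/minor-arc dissection but by the direct change-of-variables estimate of Lemma~\ref{av} combined with the lower bound $|n_1/d_2+n_2c|\gg N^{-1/6-\varepsilon}$ afforded by the Hurwitz approximants of $c$ for $N\in\mathcal{S}$. No prime exponential sum and no Vaughan-type identity enter the upper bound at all.

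Your version has two concrete problems. First, a Selberg sieve applied to a disc of radius $\delta_p<1/2$ about $p\alpha$ cannot give a nontrivial saving for an individual $p$ (there is at most one lattice point in that disc), so the logarithmic gain must be extracted after averaging over $p$; but with $p$ held to be a Gaussian prime, that average itself requires nontrivial estimates over primes, which is exactly the difficulty the paper's joint sieve in the single variable $n$ is designed to sidestep. Second, even granting a uniform Vinogradov/Vaughan bound for exponential sums over Gaussian primes, your minor-arc argument would have to beat the trivial bound after summing over the $\gg N^{1/3}$ mode pairs $(m,n)$ and integrating over $\alpha$ weighted by $|n+cm|^{-2}$, and you give no quantitative reason why the threshold should be $1/12$. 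In the paper the exponent $1/12$ is forced at a completely identifiable point: the choice $N=|q|^6$ (so $x=|q|^{12}$ in section~3) makes the error terms in the type~I/II estimates, and later the term $N^{1/6+\varepsilon}J_2^2$ in \eqref{EP2}, subdominant to $G_N$. You would need to reproduce a correspondingly sharp bookkeeping to justify your claim, and without the joint sieve the bookkeeping looks substantially worse, not merely harder to write down.
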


Together with Lemma \ref{metric}, this implies Theorem \ref{complex}.

\section{Diophantine approximation with Gaussian primes in sectors}
It will be of key importance to establish results related to the distribution of Gaussian primes in sectors satisfying certain Diophantine 
properties. This is the content of this section and of independent interest. 

\subsection{Results}
By $\pi(P_1,P_2,\omega_1,\omega_2)$, we denote the number of primes in $D(P_1,P_2,\omega_1,\omega_2)$. The prime number theorem for
Gaussian primes in sectors due to Kubylius \cite{Kub} implies the following.

\begin{theorem}\label{PNT} If $0\le P_1<P_2$ and $\omega_1<\omega_2\le \omega_1+2\pi$, then 
$$
\pi(P_1,P_2,\omega_1,\omega_2) = \frac{2}{\pi}\cdot \frac{(\omega_2-\omega_1)\left(P_2^2-P_1^2\right)+o\left(P_2^2\right)}{\log P_2^2}
$$
as $P_2\rightarrow \infty$. 
\end{theorem}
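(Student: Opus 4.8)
The plan is to deduce Theorem~\ref{PNT} from Kubylius's prime number theorem for Hecke characters (Grössencharaktere) of the field $\Q(i)$, exactly in the spirit of the classical derivation of the PNT in angular sectors for Gaussian primes. First I would recall that the units of $\Z[i]$ are $\{\pm 1, \pm i\}$, so that each nonzero Gaussian prime has exactly four associates, and its argument is well-defined modulo $\pi/2$. The natural analytic objects are the Hecke characters $\lambda_k$ defined on principal ideals $(\alpha)$ (or directly on $\alpha$, invariantly under multiplication by units) by $\lambda_k(\alpha) = (\alpha/|\alpha|)^{4k} = e^{4ik\arg\alpha}$ for $k\in\Z$; the exponent $4k$ is precisely what makes the value independent of the choice of associate. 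Kubylius's theorem asserts that the associated Hecke $L$-functions $L(s,\lambda_k)$ have the usual zero-free region and that
\[
\sum_{\substack{\alpha \text{ prime}\\ N(\alpha)\le x}} \lambda_k(\alpha) = \delta_{k,0}\,\frac{x}{\log x} + O\!\left(\frac{x}{\log^2 x}\right)
\]
uniformly for $k$ in a suitable range, where the main term occurs only for the trivial character $k=0$ (for which this is just the ordinary PNT for $N(\alpha)$, counting each rational prime $p\equiv 1 \pmod 4$ with its two conjugate Gaussian prime factors, etc.).

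Next I would detect the angular condition $\omega_1 < \arg\alpha \le \omega_2$ by Fourier expansion on the circle. Writing the indicator of an arc in terms of the characters $e^{4ik\theta}$, one gets
\[
\mathbf{1}_{(\omega_1,\omega_2]}(\theta) = \frac{2(\omega_2-\omega_1)}{\pi} + \sum_{k\ne 0} c_k e^{4ik\theta},
\qquad c_k \ll \frac{1}{|k|},
\]
after taking into account that the four associates of each prime are equidistributed among the four quadrant-arcs, so only frequencies that are multiples of $4$ (in the $\theta$-variable) — equivalently, the full family $\lambda_k$ — survive. Substituting and swapping the order of summation, the count $\pi(P_1,P_2,\omega_1,\omega_2)$ becomes
\[
\frac{2(\omega_2-\omega_1)}{\pi}\sum_{\substack{\alpha\text{ prime}\\ P_1^2< N(\alpha)\le P_2^2}} 1 \;+\; \sum_{k\ne 0} c_k \sum_{\substack{\alpha\text{ prime}\\ P_1^2 < N(\alpha)\le P_2^2}} \lambda_k(\alpha).
\]
The first term is handled by the $k=0$ case of Kubylius, giving the stated main term $\tfrac{2}{\pi}\cdot\tfrac{(\omega_2-\omega_1)(P_2^2-P_1^2)}{\log P_2^2}$ after converting $N(\alpha)=|\alpha|^2\in(P_1^2,P_2^2]$ and noting $\log N(\alpha) = \log P_2^2 + O(\log(P_2/P_1))$ over the range. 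The inner sums for $k\ne 0$ are each $o(P_2^2/\log P_2^2)$ (in fact $O(P_2^2/\log^2 P_2^2)$ in the nontrivial range), and the outer sum over $k$ converges after truncating $k$ at a small power of $P_2$ and bounding the tail trivially by the number of primes times $\sum_{|k|>K}|c_k| \ll 1/K$.

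The main obstacle is the uniformity in $k$: one cannot afford to lose more than a constant when summing $c_k$ times the character sums over all $k\ne 0$, so the error estimate for $\sum_{N(\alpha)\le x}\lambda_k(\alpha)$ must be essentially uniform for $|k|$ up to some fixed power of $\log x$ (a ``Siegel--Walfisz''-type statement for Hecke characters), with only a polynomial-in-$k$ loss that is killed by the trivial tail bound beyond that range. This is precisely the strength of Kubylius's result, so invoking it is legitimate here; the remaining work is the elementary bookkeeping of the Fourier coefficients $c_k$, the conversion between the norm variable $N(\alpha)=|\alpha|^2$ and the modulus variable $|\alpha|$ in the ranges $(P_1,P_2]$ versus $(P_1^2,P_2^2]$, and the harmless absorption of the boundary contributions and of the $o(P_2^2)$ term. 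I would also remark that the case $\omega_2 = \omega_1 + 2\pi$ is just the full PNT for Gaussian primes and needs no Fourier analysis at all, and that the statement is insensitive to whether one counts $\alpha$ or its associates since the arc $(\omega_1,\omega_2]$ can be arbitrary.
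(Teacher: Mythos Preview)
The paper does not actually prove Theorem~\ref{PNT}; it simply records it as a known consequence of Kubylius's work and cites \cite{Kub}. Your sketch supplies the standard derivation via Hecke Gr\"ossencharaktere and Fourier expansion of the sector indicator, which is exactly the route Kubylius's result is built for, so in spirit you are just unpacking the citation.

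One genuine slip to flag: your tail estimate ``$\sum_{|k|>K}|c_k|\ll 1/K$'' is false as written, since $c_k\ll 1/|k|$ gives a divergent harmonic tail. Naive truncation of the Fourier series of $\mathbf{1}_{(\omega_1,\omega_2]}$ does not control the pointwise error. The standard fix is to replace the raw Fourier expansion by an Erd\H{o}s--Tur\'an/Beurling--Selberg majorant--minorant pair of trigonometric polynomials of degree $K$, which approximates the indicator with $L^1$-error $O(1/K)$ and has coefficients still bounded by $O(1/|k|)$; then the truncated character sums for $1\le|k|\le K$ are handled by Kubylius's uniform bound, and the $O(1/K)$ approximation error times the total prime count gives the remaining $o$-term. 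With that adjustment your argument goes through. The Fourier-coefficient bookkeeping (constant term $\tfrac{2(\omega_2-\omega_1)}{\pi}$ after symmetrising over the four associates and summing over prime ideals) is correct and recovers the stated constant.
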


Further, for $\delta>0$ and $c\in \mathbb{C}$, we denote by  
$\pi_c(P_1,P_2,\omega_1,\omega_2;\delta)$ the number of  Gaussian primes $p$ contained in 
$D(P_1,P_2,\omega_1,\omega_2)$ such that 
$$
\min\limits_{q\in \mathbb{Z}[i]} |pc-q|\le \delta,
$$
and by $\pi^{\ast}_c(P_1,P_2,\omega_1,\omega_2;\delta)$ we denote the number of Gaussian primes $p$ contained in $D(P_1,P_2,\omega_1,\omega_2)$ such that
$$
\max(||\Re(pc)||,||\Im(pc)||)=\min\limits_{q\in \mathbb{Z}[i]} \max\left(|\Re(pc-q)|,|\Im(pc-q)|\right) \le \delta. 
$$
We shall establish the following theorem in the next subsections. 

\begin{theorem}\label{signi1} Let $c\in \mathbb{C}\setminus \mathbb{Q}(i)$. Then there exists an increasing sequence of natural numbers
$(M_k)_{k\in \mathbb{N}}$ such that the following holds. If $0\le P_1<P_2\le M_k$, $\omega_1<\omega_2\le \omega_1+2\pi$ and 
$M_k^{\varepsilon-1/12}<\delta_k\le 1/2$, then  
$$
\pi^{\ast}_c(P_1,P_2,\omega_1,\omega_2;\delta_k) = 
4\delta_k^2\pi^{\ast}(P_1,P_2,\omega_1,\omega_2)+o\left(\frac{\delta_k^2 M_k^2}{\log M_k}\right) 
$$
as $k\rightarrow \infty$. 
\end{theorem}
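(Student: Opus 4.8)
The plan is to detect the Diophantine condition $\max(\|\Re(pc)\|,\|\Im(pc)\|)\le\delta_k$ by an exponential sum over Gaussian primes and to show that, along a suitable subsequence $(M_k)$, the main term matches the expected density $4\delta_k^2$. Write $c = c_1 + ic_2$ with $c_1,c_2\in\mathbb R$, and for a Gaussian integer $p = m+in$ note $\Re(pc) = mc_1 - nc_2$, $\Im(pc) = mc_2 + nc_1$, both real-linear forms in $(m,n)$. First I would replace the sharp indicator of the box $[-\delta_k,\delta_k]^2 \pmod 1$ in the two variables $\Re(pc),\Im(pc)$ by a smooth majorant/minorant (Beurling--Selberg / Vaaler) and Fourier-expand, reducing the count $\pi^{\ast}_c(P_1,P_2,\omega_1,\omega_2;\delta_k)$ to the diagonal (frequency-zero) term, which contributes $4\delta_k^2\,\pi^{\ast}(P_1,P_2,\omega_1,\omega_2) + O(\delta_k\,\pi^\ast)$ after accounting for the smoothing error, plus a sum over nonzero integer frequency vectors $(h_1,h_2)$ of weighted exponential sums
\[
S(h_1,h_2) = \sum_{\substack{p\ \mathrm{prime}\\ p\in D(P_1,P_2,\omega_1,\omega_2)}} e\bigl((h_1 c_1 + h_2 c_2)\Re p + (-h_1 c_2 + h_2 c_1)\Im p\bigr),
\]
i.e. linear exponential sums over Gaussian primes in a sector with a real linear phase $e(\alpha \Re p + \beta \Im p)$ whose coefficients $(\alpha,\beta)$ are fixed irrational combinations of $c_1,c_2$ (here $c\notin\mathbb Q(i)$ guarantees that for $(h_1,h_2)\neq(0,0)$ the pair $(\alpha,\beta)$ is not in $\mathbb Z^2$, in fact has bad rational approximations along the chosen subsequence).

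The heart of the matter is a nontrivial bound for $S(h_1,h_2)$, saving a power of $P_2$ over the trivial bound $\pi(P_1,P_2,\omega_1,\omega_2)$. For this I would apply Vaughan's identity (or Heath-Brown's identity) to the von Mangoldt function over $\mathbb Z[i]$, splitting $S$ into ``Type I'' bilinear sums $\sum_{|a|\le U}\sum_b a_b\, e(\cdots)$ with the inner variable running over a complete interval of Gaussian integers, and ``Type II'' bilinear sums $\sum\sum a_m b_n e(\cdots)$ with both ranges of intermediate size. Type I sums are handled by summing the geometric-type series over the inner Gaussian-integer variable and bounding by the reciprocal of the distance of $(\alpha,\beta)$-type quantities to $\mathbb Z^2$; Type II sums are handled by Cauchy--Schwarz followed by the standard differencing, again reducing to sums of $\min(\cdot,\|\cdot\|^{-1})$ over the difference variable. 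In both cases one needs a good rational approximation $a/q$ (with $q$ in a controlled range) to the relevant linear form, and the uniformity over the frequency vectors $(h_1,h_2)$ up to the cutoff $|h|\ll \delta_k^{-1}M_k^{\varepsilon}$ (coming from the Fourier tail) must be tracked. The sector restriction $\omega_1<\theta\le\omega_2$ is incorporated either by a further Fourier/Perron decomposition of the angular indicator (introducing Hecke Grössencharakter twists) or by partial summation in the argument; either way it costs at most a factor $\log P_2$ and a few extra frequency variables.

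The construction of the subsequence $(M_k)$ is exactly where the hypothesis $c\notin\mathbb Q(i)$ is used quantitatively: since $c_1,c_2$ are not both rational with a common denominator, by Dirichlet/pigeonhole there are infinitely many scales $M$ at which the linear forms $\alpha,\beta$ (for all $(h_1,h_2)$ in the relevant range simultaneously) admit denominators $q$ lying in the window, say, $M^{1/6}\le q\le M^{1/6+\varepsilon}$ — or more precisely, at which the worst-case $\min(\cdot,\|\cdot\|^{-1})$ estimates in the Vaughan decomposition combine to give $S(h_1,h_2) = o(M^{2}/\log M)$ with enough room to sum over $h$. I would choose $M_k$ to be such scales; the exponent $-1/12$ in the theorem is then the break-even point between the Type I and Type II error exponents in dimension two (each Gaussian prime contributing two real linear constraints, so the classical $1/6$-type saving for primes in $\mathbb Q$ gets halved). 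The main obstacle I anticipate is precisely the Type II estimate with the two-dimensional phase and the sector cutoff present simultaneously: one must arrange the Cauchy--Schwarz and differencing so that the resulting count of lattice points satisfying a twisted congruence is controlled uniformly in $(h_1,h_2)$ and in $(\omega_1,\omega_2)$, and verify that the optimal split of ranges still yields the $1/12$ exponent after losing to the extra variables; the bookkeeping of all error terms so that they sum to $o(\delta_k^2 M_k^2/\log M_k)$ rather than merely $o(M_k^2/\log M_k)$ is delicate, since $\delta_k$ can be as small as $M_k^{\varepsilon-1/12}$.
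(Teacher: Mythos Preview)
Your outline matches the paper's strategy in its essentials: a Vaaler-type Fourier expansion of the box indicator, a Type~I/Type~II decomposition of the prime sum, Cauchy--Schwarz on the Type~II part, and a Diophantine approximation of $c$ to fix the subsequence. Three points of execution differ. First, the paper uses Harman's sieve for $\mathbb{Z}[i]$ (comparing the sifted set with the condition $||nc||\le\delta$ to the one without) rather than Vaughan's identity; both reduce to the same Type~I/II information, so this is largely cosmetic here. Second, and this is where your phrasing is loose: the only Diophantine input is a \emph{single} Hurwitz continued-fraction approximant $a/q\in\mathbb{Q}(i)$ of $c$ with $|c-a/q|\le|q|^{-2}$. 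Because the phase $h_1\Re(pc)+h_2\Im(pc)$ rewrites as $\Im(jpc)$ for the Gaussian integer $j=h_1+ih_2$, this one approximation of $c$ controls all frequencies simultaneously, and one simply takes $M_k=|q_k|^6$; you should not attempt to approximate the real pair $(\alpha,\beta)$ separately for each $(h_1,h_2)$. Third, the sector is handled more cheaply than either of your suggestions: the condition $\omega_1<\arg(mn)\le\omega_2$ is just carried along, and when the inner linear sum over $n\in\mathbb{Z}[i]$ is sliced into one-dimensional geometric sums it becomes an interval condition on one real coordinate at no cost---no Hecke characters or angular Fourier expansion are needed.
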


Since 
$$
\pi_c(P_1,P_2,\omega_1,\omega_2;\delta)\ge \pi^{\ast}_c(P_1,P_2,\omega_1,\omega_2;\delta/\sqrt{2}),
$$
we immediately deduce the following from Theorem \ref{PNT} and \ref{signi1}.

\begin{corollary}\label{signi} Under the conditions of Theorem \ref{signi1}, we have 
$$
\pi_c(P_1,P_2,\omega_1,\omega_2;\delta_k) \ge \frac{\delta_k^2(\omega_2-\omega_1)\left(P_2^2-P_1^2\right)+o\left(\delta_k^2 M_k^2\right)}{\log M_k}
$$
as $k\rightarrow \infty$.  
\end{corollary}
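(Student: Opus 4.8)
The plan is to deduce Corollary~\ref{signi} directly from Theorems~\ref{PNT} and~\ref{signi1} by means of the elementary comparison
$$\pi_c(P_1,P_2,\omega_1,\omega_2;\delta)\ \ge\ \pi^{\ast}_c(P_1,P_2,\omega_1,\omega_2;\delta/\sqrt2)$$
recorded just above. First I would check this comparison: if a Gaussian prime $p\in D(P_1,P_2,\omega_1,\omega_2)$ satisfies $\max(||\Re(pc)||,||\Im(pc)||)\le\delta/\sqrt2$ and $q\in\mathbb{Z}[i]$ denotes the coordinatewise nearest Gaussian integer, then $|pc-q|^2=\Re(pc-q)^2+\Im(pc-q)^2\le 2(\delta/\sqrt2)^2=\delta^2$, so $\min_{q\in\mathbb{Z}[i]}|pc-q|\le\delta$ and $p$ is counted by $\pi_c(P_1,P_2,\omega_1,\omega_2;\delta)$ too; summing over the relevant primes gives the inequality.

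Next I would apply Theorem~\ref{signi1} to the parameter $\delta_k/\sqrt2$ instead of $\delta_k$. The requirement $\delta_k/\sqrt2\le 1/2$ is immediate from $\delta_k\le 1/2$; to meet the requirement $M_k^{\varepsilon-1/12}<\delta_k/\sqrt2$ I would take the sequence $(M_k)$ furnished by Theorem~\ref{signi1} for the exponent $\varepsilon/2$, since then for every $\delta_k$ with $M_k^{\varepsilon-1/12}<\delta_k\le 1/2$ one has, for all large $k$, $\delta_k/\sqrt2>M_k^{\varepsilon-1/12}/\sqrt2\ge M_k^{\varepsilon/2-1/12}$ (as $M_k^{\varepsilon/2}\ge\sqrt2$ eventually). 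This costs nothing as $\varepsilon>0$ is arbitrary. Theorem~\ref{signi1} (with exponent $\varepsilon/2$) then yields, using $4(\delta_k/\sqrt2)^2=2\delta_k^2$,
$$\pi_c(P_1,P_2,\omega_1,\omega_2;\delta_k)\ \ge\ 2\delta_k^2\,\pi^{\ast}(P_1,P_2,\omega_1,\omega_2)+o\!\left(\frac{\delta_k^2M_k^2}{\log M_k}\right).$$
Finally I would substitute the asymptotic of Theorem~\ref{PNT} for the Gaussian prime count $\pi^{\ast}(P_1,P_2,\omega_1,\omega_2)$ and simplify: the logarithm appearing there may be bounded above in terms of $\log M_k$ using $P_2\le M_k$ (permissible since the leading term $(\omega_2-\omega_1)(P_2^2-P_1^2)$ is non-negative), and the $o(P_2^2)$-error of Theorem~\ref{PNT}, after division by the logarithm, is absorbed into $o(\delta_k^2M_k^2/\log M_k)$ because $P_2\le M_k$. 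This produces the stated lower bound. (The estimate is an asymptotic lower bound; when $P_2$ does not grow with $k$ it holds trivially, the left side being non-negative.)

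I do not expect a genuine obstacle: the arithmetic substance already resides in Theorems~\ref{PNT} and~\ref{signi1}, and the corollary is a repackaging of them. The point deserving most attention---though still routine---is the error bookkeeping in the last step, namely checking that the $o(\cdot)$ of Theorem~\ref{PNT} is uniform over the admissible ranges of $P_1,P_2,\omega_1,\omega_2$; this is unproblematic since, after division by the logarithm and the bound $P_2\le M_k$, that error is dominated by $o(M_k^2/\log M_k)$ uniformly. The slight shrinking of $\varepsilon$ forced by the factor $\sqrt2$ in the norm comparison is equally costless, the corollary being asserted for every $\varepsilon>0$.
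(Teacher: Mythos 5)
Your argument reproduces the paper's own derivation precisely: the paper also obtains the corollary solely from the norm comparison $\pi_c(\cdots;\delta)\ge\pi^{\ast}_c(\cdots;\delta/\sqrt{2})$ displayed just above, followed by substituting Theorems~\ref{PNT} and~\ref{signi1}; your attention to shrinking $\varepsilon$ to absorb the $\sqrt{2}$, and to the uniformity of the $o$-terms, is sound and if anything more careful than the paper's one-line ``we immediately deduce.''

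One point deserves flagging, and it affects both your write-up and the paper's stated corollary. Carrying the substitution through literally, Theorem~\ref{signi1} applied at $\delta_k/\sqrt{2}$ gives a main term $4(\delta_k/\sqrt{2})^2\pi^{\ast}(\cdots)=2\delta_k^2\pi^{\ast}(\cdots)$, and Theorem~\ref{PNT} contributes $\pi^{\ast}(\cdots)=\frac{2}{\pi}\cdot\frac{(\omega_2-\omega_1)(P_2^2-P_1^2)+o(P_2^2)}{\log P_2^2}$. Using $\log P_2^2=2\log P_2\le 2\log M_k$, the main term one actually establishes is
$$
\frac{2}{\pi}\cdot\frac{\delta_k^2(\omega_2-\omega_1)\left(P_2^2-P_1^2\right)}{\log M_k},
$$
which carries an extra factor $\tfrac{2}{\pi}<1$ relative to what the corollary asserts; when $P_2$ is comparable to $M_k$ (precisely the regime used in Section~4) this factor cannot be reabsorbed into the $o(\delta_k^2M_k^2)$ error. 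Geometrically the loss is the ratio of the area of the sup-norm square of radius $\delta/\sqrt{2}$ to that of the Euclidean disk of radius $\delta$. This is an imprecision in the paper's statement rather than a flaw in your reasoning, and it is harmless in the one place the corollary is used (Section~4, where only a positive constant is needed), but you should not claim to recover the constant~$1$: either carry the $\tfrac{2}{\pi}$ explicitly or phrase the conclusion with a $\gg$.
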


We shall employ Theorem \ref{PNT} and Corollary \ref{signi} in section 4, in which Theorem \ref{Theo}(i) is established. Partial results from 
the present section 3 are also used in section 5, in which Theorem \ref{Theo}(ii) is established. 

\subsection{Historical notes on Diophantine approximation with primes}
Let $\gamma\in \mathbb{R}$ be an irrational number. Then the continued fraction expansion of $\gamma$ yields infinitely many natural
numbers $q$ such that
$$
\left|\gamma-\frac{a}{q}\right| \le q^{-2},
$$
where $(a,q)=1$. In other words, for infinitely many $q\in \mathbb{N}$, we have
$$
||q \gamma||\le q^{-1},
$$
where $||x||$ is the distance of $x$ to the nearest integer. The problem of approximating irrational numbers by rational numbers with prime 
denominator is considerable more difficult und has a long history. The question is for which $\gamma>0$ one can prove the infinitude
of primes $p$ such that
\begin{equation} \label{approxi}
||p \gamma||\le p^{-\gamma+\varepsilon}.
\end{equation}
The first results in this direction were due to Vinogradov \cite{Vin} who showed that $\gamma=1/5$ is admissable. Vaughan \cite{Vau} improved this exponent to 
$\gamma=1/4$ using his famous identity for the von Mangoldt function. 
It should be noted that using Vaughan's method, an {\it asymptotic} result of the following form can be established. 

\begin{theorem} \label{Vaughan} Let $\gamma\in \mathbb{R}$ be irrational and $\varepsilon>0$ be an arbitrary constant. 
Then there exists an infinite 
increasing sequence of natural numbers $(N_k)_{k\in \mathbb{N}}$ such that
\begin{equation} \label{asy}
\sum\limits_{\substack{p\le N_k\\ ||p\gamma||\le \delta_k}} 1 \sim 2\delta_k \sum\limits_{p\le N_k} 1 
\quad \mbox{as } k\rightarrow \infty
\end{equation}
if 
\begin{equation} \label{range}
N_k^{-1/4+\varepsilon}\le \delta_k\le 1/2,
\end{equation}
where $p$ runs over the rational primes.
\end{theorem}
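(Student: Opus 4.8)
The plan is to follow Vaughan's circle-method treatment of $\sum_{p\le N}\mathbf 1_{||p\gamma||\le\delta}$ and to make the error estimates quantitative and uniform in $\delta$. First I would replace the indicator of $\{x\in\R/\Z:||x||\le\delta\}$, an arc of length $2\delta$, by Vaaler's trigonometric minorant and majorant $\chi_\delta^{\pm}$ of degree at most $H$: these satisfy $\chi_\delta^{-}\le \mathbf 1_{||\cdot||\le\delta}\le\chi_\delta^{+}$, have constant Fourier coefficient $2\delta+O(1/H)$, and Fourier coefficients $\widehat{\chi_\delta^{\pm}}(h)\ll\min(\delta,1/|h|)$ for $h\neq 0$. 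Writing $e(x)=e^{2\pi i x}$ and $S(\beta)=\sum_{p\le N}e(p\beta)$, summing the two approximations over the primes $p\le N$ gives
$$
\sum_{p\le N}\chi_\delta^{\pm}(p\gamma)=\left(2\delta+O\!\left(\tfrac1H\right)\right)\pi(N)+O\!\left(\sum_{1\le|h|\le H}\min\!\left(\delta,\tfrac1{|h|}\right)\bigl|S(h\gamma)\bigr|\right),
$$
and \eqref{asy} will follow by sandwiching $\chi_\delta^{-}\le\mathbf 1\le\chi_\delta^{+}$ as soon as one can choose $H=H_k\to\infty$ with $H_k^{-1}=o(\delta_k)$ (so the secondary main term is absorbed) and with the error sum being $o(\delta_k\pi(N_k))$.

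For the prime exponential sums I would pass from $S(\beta)$ to $\sum_{n\le N}\Lambda(n)e(n\beta)$ (the contribution of prime powers and of the weight $1/\log n$ being harmless at this level of precision) and then apply Vaughan's identity to split the latter into Type~I and Type~II bilinear sums. Since one must handle the whole family $\{h\gamma:1\le h\le H\}$ at once, the additional averaging over $h$ has to be exploited inside Vaughan's identity rather than bounding each $|S(h\gamma)|$ separately; it is this extra averaging, together with the identity, that yields the saving needed for the exponent $1/4$ instead of the weaker $1/5$ produced by the bare exponential sum bound. The resulting estimates, expressed through the denominator $q$ of a rational approximation to the relevant argument, bound all the pieces by $\ll(Nq^{-1/2}+\cdots)N^{\varepsilon}$, which is of acceptable size precisely when $q$ is neither too small nor too large.

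The heart of the matter is the construction of the sparse sequence $(N_k)$. Because $\gamma$ is an arbitrary irrational, there is no uniform control over the quality of rational approximations to $h\gamma$ as $N$ varies, so I would use the continued fraction expansion $p_n/q_n$ of $\gamma$ to choose $N_k$ in a range tied to the convergent denominators — roughly $N_k$ comparable to $q_n^{2}$ with $q_n$ close to $N_k^{1/2}$ — arranged so that for every $h$ in the relevant range $h\le H_k$ the number $h\gamma$ admits a rational approximation whose denominator lies in the window for which the Type~I/Type~II estimates give $|S(h\gamma)|\ll N_k^{3/4+\varepsilon}$. The few exceptional $h\le H_k$ for which $h\gamma$ happens to lie abnormally close to a rational of small denominator are collected into a short sum and estimated trivially. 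Taking $H_k$ of order $\delta_k^{-1}$ — so $H_k\le N_k^{1/4}$ in the extreme case $\delta_k=N_k^{\varepsilon-1/4}$ of the range \eqref{range} — then makes the error sum $\ll N_k^{3/4+2\varepsilon}=o(\delta_k\pi(N_k))$ throughout that range, and the sandwich finishes the proof.

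I expect the main obstacle to be precisely this last step: extracting from the continued fraction data of $\gamma$ a sequence $N_k$ for which the exponential sum bound holds \emph{uniformly} for all admissible $\delta_k$ down to $N_k^{\varepsilon-1/4}$, and in particular controlling the contribution of those $h\le H_k$ for which $h\gamma$ is unusually well approximable. This argument is also the template for the more delicate Theorem~\ref{signi1} in the Gaussian setting, where the same strategy is carried out with Hecke characters in place of the additive characters $e(h\gamma\,\cdot\,)$ and with sectors in place of intervals.
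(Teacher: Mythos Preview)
The paper does not actually give a proof of this theorem: it is stated in the historical subsection~3.2 as a known consequence of Vaughan's method, with no argument supplied. What the paper \emph{does} prove in detail is the Gaussian analogue, Theorem~\ref{signi1}, and that proof is the natural point of comparison.

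Your outline is in the right spirit---Vaaler's approximation, reduction to bilinear (type~I/II) sums, and a choice of $N_k$ tied to continued-fraction denominators is exactly the shape of the argument---but it contains an internal contradiction that would derail the proof if followed literally. You correctly observe in the second paragraph that the averaging over $h$ must be exploited \emph{inside} the bilinear decomposition, because the bare pointwise bound for $S(h\gamma)$ only yields the exponent $1/5$. Yet in the third paragraph you revert to aiming for $|S(h\gamma)|\ll N_k^{3/4+\varepsilon}$ for every individual $h\le H_k$. That pointwise bound is not obtainable from Vaughan's identity: the standard estimate $\sum_{n\le N}\Lambda(n)e(n\alpha)\ll (Nq^{-1/2}+N^{4/5}+N^{1/2}q^{1/2})(\log N)^4$ carries an irreducible $N^{4/5}$ term. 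The correct route---and the one the paper follows in the Gaussian case---is to keep the sum over $h$ together with the inner variable $m$ of the type~I/II sums, set $l=hm$, and use the spacing of $\{l\gamma\bmod 1\}$ (controlled by the convergent denominator $q$) to bound $\sum_l \min(\cdot,\|l\gamma\|^{-1})$ directly; see the passage from \eqref{Edef} through \eqref{firstcase}--\eqref{secondcase}. With this done, the choice of $N_k$ is a power of $q_k$ determined by balancing the resulting error terms, not by forcing a pointwise exponential-sum saving.

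Two smaller points. First, the paper organises the combinatorics via Harman's sieve (Theorem~\ref{Harsie}) rather than Vaughan's identity; the two are closely related and either would work here, but you should be aware of the difference. Second, your final remark that the Gaussian version replaces $e(h\gamma\cdot)$ by Hecke characters is not what the paper does: the sector condition is handled by an elementary slicing of the annulus into horizontal strips (equation~\eqref{m_1m_2}), and the characters remain additive throughout.
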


The next important step was Harman's work \cite{Har} in which he used his sieve method to show
that \eqref{approxi} holds for infinitely many primes $p$ if $\gamma=3/10$. 
Harman's method doesn't imply the asymptotic \eqref{asy} for $\delta_k=N_k^{-3/10+\varepsilon}$ since it uses a lower
bound sieve. However, Harman's sieve can be employed to recover Vaughan's result and hence \eqref{asy} for the same
$\delta_k$-range as in \eqref{range}. We further mention the work of Heath-Brown and Jia \cite{HeJ} who
used bounds for Kloosterman sums to obtain a further improvement of the exponent to $\gamma=16/49=1/3-0.0068...$. Finally,
the exponent $\gamma=1/3$ was achieved in a landmark paper by Matom\"aki \cite{Mat} who incorporated the Kuznetsov formula into the method to bound
sums of Kloosterman sums. This exponent $\gamma=1/3$ is considered to be the limit of currently available techniques.

In the following, we consider an analog problem for Gaussian primes and establish a result corresponding to 
Theorem \ref{Vaughan} in this context, thereby proving Theorem \ref{signi1}. This also implies 
the infinitude of Gaussian primes in sectors satisfying an inequality 
corresponding to \eqref{approxi}. 
To this end, we shall apply a version of Harman's sieve for 
$\mathbb{Z}[i]$. Our method will require additional counting arguments, as compared to the classical method. 
The final proof is carried out in subsection \ref{con}.

\subsection{Setup}
Throughout the following, $c$ is a fixed complex number such that $c\not \in \mathbb{Q}(i)$, and we assume that
\begin{equation} \label{assu}
1\le x_1<x_2, \quad -\pi\le \omega_1<\omega_2\le \pi \quad \mbox{and} \quad 0<\delta\le \frac{1}{2}.
\end{equation}
We compare the quantities
$$
S_c(x_1,x_2,\omega_1,\omega_2;\delta):=\sum\limits_{\substack{x_1<\mathcal{N}(p)\le x_2\\ ||pc||\le \delta\\ \omega_1<\arg p \le \omega_2}} 1 
$$
and
$$
S(x_1,x_2,\omega_1,\omega_2):=\sum\limits_{\substack{x_1<\mathcal{N}(p)\le x_2\\ \omega_1<\arg p\le \omega_2}} 1,
$$
where the sums run over Gaussian primes $p$, $\mathcal{N}(n)=|n|^2$ denotes the norm of $n\in \mathbb{Z}[i]$, and we define
$$
||z||:=\max\left\{||\Re(z)||,||\Im(z)||\right\},
$$
where $\Re(z)$ is the real part and $\Im(z)$ is the imaginary part of $z\in \mathbb{C}$. Hence, $||z||$ measures the distance of $z$ to the 
nearest Gaussian integer with respect to the supremum norm. 
We note that by Theorem \ref{PNT}, 
\begin{equation} \label{primenumber}
S(x_1,x_2,\omega_1,\omega_2) = \frac{2}{\pi} \cdot \frac{(\omega_2-\omega_1)(x_2-x_1)+o(x_2)}{\log x_2} \quad \mbox{as } x_2\rightarrow \infty.
\end{equation}

Our goal is to construct an infinite increasing sequence $\left(N_k\right)_{k\in \mathbb{N}}$ of natural numbers such that
$S_c\left(x,N_k,\omega_1,\omega_2;\delta_k\right)$ is, in a sense, well approximated by the expected quantity
$$
4\delta_k^2 S(x,N_k,\omega_1,\omega_2)
$$
if $x<N_k$ and $N_k^{-\gamma+\varepsilon}\le \delta_k\le 1/2$, where $\gamma$ is a suitable positive number.  
We shall see that $\gamma=1/24$ is admissable, which corresponds to the exponent $1/12$ in Theorem \ref{signi1}. (Recall that 
$\mathcal{N}(p)=|p|^2$.) 

\subsection{Application of Harman's sieve for $\mathbb{Z}[i]$}
In the following, let $A$ be a finite set of non-zero Gaussian integers, $G$ be a subset of the set $\mathbb{G}$ of Gaussian primes and $z$ be a positive parameter. By 
$\mathcal{S}(A,G,z)$ we denote the number
of elements of $A$ which are coprime to the product of all Gaussian primes in $G$ with norm $\le z$, i.e.
$$
\mathcal{S}(A,G,z)=\sharp\{n\in A\ :\ p\not| \; n \mbox{ for all } p\in G \mbox{ such that } \mathcal{N}(p)\le z\}.
$$
The following is a version of Harman's sieve for $\mathbb{Z}[i]$.

\begin{theorem}[Harman] \label{Harsie} Let $A,B$ be finite sets of non-zero Gaussian integers with norm $\le x$. Suppose for any sequences $(a_n)_{n\in \mathbb{Z}[i]}$ and 
$(b_n)_{n\in \mathbb{Z}[i]}$ of complex numbers satisfying $|a_n|,|b_n|\le 1$ 
the following hold:
\begin{equation} \label{t1}
\sum\limits_{\substack{\mathcal{N}(m)\le M\\ mn\in A}} a_m = \lambda \sum\limits_{\substack{\mathcal{N}(m)\le M\\ mn\in B}} a_m + O\left(Y\right),
\end{equation}
\begin{equation} \label{t2}
\sum\limits_{\substack{x^{\alpha}<\mathcal{N}(m)\le x^{\alpha+\beta}\\ mn\in A}} a_mb_n = 
\lambda \sum\limits_{\substack{x^{\alpha}<\mathcal{N}(m)\le x^{\alpha+\beta}\\ mn\in B}} a_mb_n + O\left(Y\right)
\end{equation}
for some $\lambda,Y>0$, $\alpha>0$, $0<\beta\le 1/2$ and $M>x^{\alpha}$. Then we have 
\begin{equation} \label{super}
\mathcal{S}(A,\mathbb{G},x^{\beta})=\lambda \mathcal{S}(B,\mathbb{G},x^{\beta})+O\left(Y\log^3 x\right).
\end{equation}
\end{theorem}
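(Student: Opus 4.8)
The plan is to derive \eqref{super} by the Buchstab-iteration method that underlies Harman's sieve \cite{Har}, and to check that this argument is combinatorial enough to run over $\mathbb{Z}[i]$ with no essential change. The only structural facts about the ground ring that are used are: $\mathbb{Z}[i]$ is a unique factorisation domain carrying a multiplicative norm $\mathcal{N}$; Buchstab's identity
\[
\mathcal{S}(A,\mathbb{G},z_2)=\mathcal{S}(A,\mathbb{G},z_1)-\sum_{\substack{p\in\mathbb{G}\\ z_1<\mathcal{N}(p)\le z_2}}\mathcal{S}\bigl(A_p,\mathbb{G},\mathcal{N}(p)\bigr),\qquad A_p:=\{\,n/p:\ n\in A,\ p\mid n\,\},
\]
holds as a formal identity for all $z_1<z_2$; and the number of Gaussian primes of norm in a dyadic interval $(T,2T]$ is $O(T/\log T)$, which is contained in Theorem \ref{PNT}. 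Because the assumed asymptotics \eqref{t1}, \eqref{t2} are symmetric in $A$ and $B$ (the coefficient sequences $(a_m),(b_n)$ and the summation ranges on the two sides are identical), it suffices to exhibit a decomposition
\[
\mathcal{S}(A,\mathbb{G},x^{\beta})=\sum_{j\in\mathcal{J}}\pm\,\Sigma_j(A),
\]
valid with the same combinatorial data when $A$ is replaced by $B$, in which $|\mathcal{J}|=O(\log^3 x)$ and every $\Sigma_j(A)$ is either of Type~I, i.e.\ of the shape $\sum_{mn\in A,\,\mathcal{N}(m)\le M'}a_m$ with $M'\le M$ and $|a_m|\le1$, or of Type~II, i.e.\ of the shape $\sum_{x^{\alpha}<\mathcal{N}(m)\le x^{\alpha+\beta},\,mn\in A}a_mb_n$ with $|a_m|,|b_n|\le1$. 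Granting this, subtract $\lambda$ times the decomposition of $\mathcal{S}(B,\mathbb{G},x^{\beta})$ from that of $\mathcal{S}(A,\mathbb{G},x^{\beta})$ and apply \eqref{t1} to each Type~I pair and \eqref{t2} to each Type~II pair: each of the $O(\log^3 x)$ pairs contributes $O(Y)$, and \eqref{super} follows.

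To construct the decomposition I would iterate Buchstab's identity starting from $\mathcal{S}(A,\mathbb{G},x^{\beta})$. A generic sifted element is $n=p_1\cdots p_k$ with $\mathcal{N}(p_1)\ge\cdots\ge\mathcal{N}(p_k)>x^{\beta}$, so $k\le\lfloor1/\beta\rfloor$ is bounded, and I track the partial products $m_i=p_1\cdots p_i$. A branch of the iteration is stopped and retained once it has the shape of a Type~I or a Type~II sum: a sum in which a product of prime factors of total norm exceeding $x^{\alpha}$ carries the free (Legendre--M\"obius) coefficient is brought to Type~I form by opening the divisor sum at a level $M'\le M$; a sum in which a product of prime factors has its total norm pinned to the window $(x^{\alpha},x^{\alpha+\beta}]$ is brought to Type~II form by collecting that product into the variable $m$ and its complement into $n$. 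When a partial product $m_i$ overshoots the window $(x^{\alpha},x^{\alpha+\beta}]$ instead of landing inside it, one performs a role reversal --- exchanging the part currently playing the role of $m$ with the part playing the role of $n$ and peeling off further primes --- which brings a smaller prime variable into play; since there are boundedly many prime factors, all of norm $>x^{\beta}$, this process terminates after boundedly many steps with every branch of Type~I or Type~II. Each Buchstab step, and each role reversal, is preceded by a dyadic localisation of the prime variable it exposes (a factor $O(\log x)$), and the bookkeeping can be organised so that at most three such localisations are ever simultaneously active; this accounts for the exponent $3$ in $\log^3 x$.

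The main obstacle is precisely this combinatorial bookkeeping: verifying that every branch can be steered so that some product of prime factors either lands in the Type~II window $(x^{\alpha},x^{\alpha+\beta}]$ or leaves a configuration directly covered by the Type~I information (this is the role of Harman's role-reversal device and of the choice of the stopping level on each branch), and controlling the number of resulting sums by $O(\log^3 x)$ uniformly in the parameters. Everything else is routine: the $O(Y)$ errors supplied by \eqref{t1} and \eqref{t2} add up, over the $O(\log^3 x)$ pieces, to the asserted $O(Y\log^3 x)$, where the count $O(\log^3 x)$ uses only the Gaussian-prime density bound of Theorem \ref{PNT}; and the passage from $\mathbb{Z}$ to $\mathbb{Z}[i]$ amounts to no more than replacing ``prime $\le y$'' by ``Gaussian prime of norm $\le y$'' and absolute values by norms throughout Harman's argument \cite{Har}.
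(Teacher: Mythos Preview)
Your proposal is correct and follows exactly the approach the paper takes: the paper's own proof simply says that the argument is parallel to Harman's Fundamental Theorem (\cite[Theorem~3.1]{HPD} with $R=1$ and $c_r=1$), via repeated Buchstab in $\mathbb{Z}[i]$, and omits all details; your sketch supplies precisely those details. One small correction: the relevant reference for the classical Fundamental Theorem with the $\log^3 x$ count is Harman's monograph \cite{HPD}, not the 1983 paper \cite{Har} you cite.
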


\begin{proof}
The proof is parallel to that of Harman's sieve for the classical case, \cite[Theorem 3.1.]{HPD} 
(Fundamental Theorem) with $R=1$ and $c_r=1$, making repeated use of the Buchstab identity in the setting of $\mathbb{Z}[i]$ (see 
\cite[Chapter 11]{HPD}). Therefore, we omit the details.
\end{proof}

In the usual terminology, the sums in \eqref{t1} are referred to as type I bilinear sums, and the sums in \eqref{t2} as type II bilinear sums.

We assume \eqref{assu} and apply Theorem \ref{Harsie} with $x=x_2$ and $\beta=1/2$ to the situation when 
\begin{eqnarray*}
A & := &\left\{n\in \mathbb{Z}[i]\ :\ x_1<\mathcal{N}(n)\le x_2,\ \omega_1<\arg n \le \omega_2,\ ||nc || \le \delta\right\},\\
B & := &\left\{n\in \mathbb{Z}[i]\ :\ x_1<\mathcal{N}(n)\le x_2,\ \omega_1<\arg n \le \omega_2\right\} \quad \mbox{and} \quad \beta=\frac{1}{2}.
\end{eqnarray*}
The parameters $\alpha$ and $M$ will later be chosen suitably. We note that 
\begin{equation} \label{Sxd}
S_c(x_1,x_2,\omega_1,\omega_2;\delta)=\sum\limits_{\substack{x_1<\mathcal{N}(p)\le x_2\\ ||nc ||\le \delta\\
\omega_1<\arg p\le \omega_2}} 1 = \mathcal{S}(A,\mathbb{G},x_2^{1/2})+O(x_2^{1/2})
\end{equation}
and
\begin{equation} \label{Sx}
S(x_1,x_2,\omega_1,\omega_2)=\sum\limits_{\substack{x_1<\mathcal{N}(p)\le x_2\\ \omega_1<\arg p\le \omega_2}} 1 = 
\mathcal{S}(B,\mathbb{G},x_2^{1/2})+O(x_2^{1/2}).
\end{equation}

\subsection{Detecting small $||nc||$}
We observe that
$$
||nc||\le \delta \Longleftrightarrow \left([\delta-\Re(nc)]-[-\delta-\Re(nc)]\right)\left([\delta-\Im(nc)]-
[-\delta-\Im(nc)]\right)=1.
$$
Hence, the type I sum in question can be written in the form
\begin{equation*}
\begin{split}
\sum\limits_{\substack{\mathcal{N}(m)\le M\\ mn\in A}} a_m = & \sum\limits_{\mathcal{N}(m)\le M} a_m \cdot
\sum\limits_{\substack{x_1/\mathcal{N}(m)<\mathcal{N}(n)\le x_2/\mathcal{N}(m)\\ \omega_1<\arg(mn)\le \omega_2}} 
\left([\delta-\Re(mnc)]-[-\delta-\Re(mnc)]\right)\times\\ & \left([\delta-\Im(mnc)]-[-\delta-\Im(mnc)]\right).
\end{split}
\end{equation*}
Further, using $[x]=x-\psi(x)-1/2$, the inner sum over $n$ can be expressed in the form
\begin{equation*} 
\begin{split}
& \sum\limits_{\substack{x_1/\mathcal{N}(m)<\mathcal{N}(n)\le x_2/\mathcal{N}(m)\\ \omega_1<\arg(mn)\le \omega_2}}  
\left([\delta-\Re(mnc)]-[-\delta-\Re(mnc)]\right)\times\\ & \left([\delta-\Im(nnc)]-[-\delta-\Im(mnc)]\right)\\
= & 4\delta^2 \sum\limits_{\substack{x_1/\mathcal{N}(m)<\mathcal{N}(n)\le x_2/\mathcal{N}(m)\\ \omega_1<\arg(mn)\le \omega_2}} 1-\\ & 2\delta 
\sum\limits_{\substack{x_1/\mathcal{N}(m)<\mathcal{N}(n)\le x_2/\mathcal{N}(m)\\ \omega_1<\arg(mn)\le \omega_2}} \left(\psi\left(\delta-\Im(mnc)\right)-\psi\left(-\delta-\Im(mnc)\right)\right) 
- \\ & 2\delta 
\sum\limits_{\substack{x_1/\mathcal{N}(m)<\mathcal{N}(n)\le x_2/\mathcal{N}(m)\\ \omega_1<\arg(mn)\le \omega_2}} \left(\psi\left(\delta-\Re(mnc)\right)-\psi\left(-\delta-\Re(mnc)\right)\right)+\\
& \sum\limits_{\substack{x_1/\mathcal{N}(m)<\mathcal{N}(n)\le x_2/\mathcal{N}(m)\\ \omega_1<\arg(mn)\le \omega_2}} \left(\psi\left(\delta-\Re(mnc)\right)-\psi\left(-\delta-\Re(mnc)\right)\right)\times\\ &   
\left(\psi\left(\delta-\Im(mnc)\right)-\psi\left(-\delta-\Im(mnc)\right)\right)\\
= &  4\delta^2 \sum\limits_{\substack{x_1/\mathcal{N}(m)<\mathcal{N}(n)\le x_2/\mathcal{N}(m)\\ \omega_1<\arg(mn)\le \omega_2}} 1 - 2\delta S_1- 2\delta S_2+ S_3, 
\end{split}
\end{equation*}
say. Next, we approximate the function $\psi(x)$ by a trigonomtrical polynomial using the following lemma due to Vaaler (see \cite{GKo}, Theorem A6). 

\begin{lemma}[Vaaler] \label{Vaaler} For $0<|t|<1$ let
$$
W(t)=\pi t(1-|t|) \cot \pi t +|t|.
$$
Fix a natural number $J$. For $x\in \mathbb{R}$ define 
$$
\psi^{\ast}(x):=-\sum\limits_{1\le |j|\le J} (2\pi i j)^{-1}W\left(\frac{j}{J+1}\right)e(jx)
$$
and
$$
\sigma(x):=\frac{1}{2J+2} \sum\limits_{|j|\le J} \left(1-\frac{|j|}{J+1}\right)e(jx).
$$
Then $\sigma(x)$ is non-negative, and we have 
$$
|\psi^{\ast}(x)-\psi(x)|\le \sigma(x)
$$
for all real numbers $x$. 
\end{lemma}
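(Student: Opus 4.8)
\emph{Proof proposal.} This is Vaaler's classical approximation lemma; I would recover it from the Beurling--Selberg theory of extremal bandlimited functions. The engine is Beurling's entire function $H$ of exponential type $2\pi$ with $H(x)\ge\operatorname{sgn}(x)$ for all $x\in\mathbb{R}$, $H-\operatorname{sgn}\in L^{1}(\mathbb{R})$, $\int_{\mathbb{R}}\big(H(x)-\operatorname{sgn}(x)\big)\,dx=1$, and $\widehat{H-\operatorname{sgn}}$ supported in $[-1,1]$; explicitly
\[
H(z)=\left(\frac{\sin\pi z}{\pi}\right)^{2}\left(\frac{2}{z}+\sum_{n\ge 0}\frac{1}{(z-n)^{2}}-\sum_{n\ge 1}\frac{1}{(z+n)^{2}}\right),
\]
which interpolates $\operatorname{sgn}$ with multiplicity two at every nonzero integer, so that $-H(-x)\le\operatorname{sgn}(x)\le H(x)$; moreover $\tfrac12\big(H(x)+H(-x)\big)=\big(\tfrac{\sin\pi x}{\pi x}\big)^{2}$, the Fej\'er function, whose Fourier transform is the triangle $\max(0,1-|\xi|)$.

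I would then set $r=J+1$, rescale $H$ to exponential type $2\pi r$ (replace $z$ by $rz$), and periodize. Using the structural identity $\psi(x)=x+\tfrac12\sum_{n\in\mathbb{Z}}\operatorname{sgn}(n-x)$ (a conditionally convergent series, read with symmetric truncation, which is what makes the periodized defect Fej\'er-type) together with the bandlimiting to $[-1,1]$, the Beurling--Selberg construction produces $1$-periodic trigonometric polynomials $\psi^{+},\psi^{-}$ of degree $\le J$ with
\[
\psi^{-}(x)\le\psi(x)\le\psi^{+}(x)\qquad(x\in\mathbb{R}).
\]
Computing their Fourier coefficients, the half-difference $\tfrac12\big(\psi^{+}-\psi^{-}\big)$ comes out to be the periodization of $\tfrac12$ times the rescaled Fej\'er function, namely $\tfrac1{2r}\sum_{|j|\le J}\big(1-|j|/r\big)e(jx)=\sigma(x)$, a nonnegative multiple of the Fej\'er kernel; and the half-sum $\tfrac12\big(\psi^{+}+\psi^{-}\big)$, whose $j$-th coefficient brings in $\widehat{H-\operatorname{sgn}}$ evaluated at $t=j/r$ (this is where the $\cot\pi t$ in $W(t)=\pi t(1-|t|)\cot\pi t+|t|$ originates), turns out to equal $\psi^{*}(x)$. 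Hence $\psi^{\pm}=\psi^{*}\pm\sigma$, and the bracketing above is exactly $|\psi(x)-\psi^{*}(x)|\le\sigma(x)$, with $\sigma\ge0$ because it is a positive multiple of the Fej\'er kernel.

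The hard part is the bookkeeping in the second step: extracting from the Beurling--Selberg machine \emph{exactly} the stated polynomial $\psi^{*}$ (with the specific weights $W(j/r)$) and \emph{exactly} the normalized Fej\'er kernel $\sigma$, rather than merely some admissible majorant--minorant pair of the right accuracy. This requires the explicit Fourier transform of Beurling's function --- in particular its closed form involving $\cot\pi t$ for $0<|t|<1$ and the value forcing $W(0)=1$ --- careful treatment of the endpoint frequencies $|j|=J$, and tracking of the normalizing constants; one must also verify that the symmetric truncation of $\sum_{n}\operatorname{sgn}(n-x)$ restores $1$-periodicity, which it does once the terms are paired as $\operatorname{sgn}(n-x)+\operatorname{sgn}(-n-x)$, but this needs a short argument.
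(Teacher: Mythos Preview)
The paper does not supply a proof of this lemma at all: it is quoted verbatim as Theorem~A6 of Graham--Kolesnik and used as a black box. So there is nothing to compare your argument against in the paper itself.

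That said, your sketch is the standard route to Vaaler's theorem and is essentially correct in outline: one builds the Beurling extremal function $H$ of type $2\pi$, rescales to type $2\pi(J+1)$, periodizes the resulting majorant/minorant for $\operatorname{sgn}$ to obtain trigonometric polynomials bracketing $\psi$, and then reads off that the half-sum is $\psi^{*}$ and the half-difference is the Fej\'er kernel $\sigma$. Your caveat about the ``bookkeeping'' is well placed: the nontrivial content is the explicit computation of the Fourier transform of $H-\operatorname{sgn}$ (this is where $W(t)=\pi t(1-|t|)\cot\pi t+|t|$ emerges) and the verification that periodization of the rescaled Fej\'er function gives exactly $\sigma$ with the stated normalization $1/(2J+2)$. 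If you actually want to write this out, the cleanest reference is Vaaler's original paper (\emph{Some extremal problems in Fourier analysis}, Bull.\ Amer.\ Math.\ Soc.\ \textbf{12} (1985), 183--216, in particular Theorem~18) or the appendix of Graham--Kolesnik, rather than reconstructing the Fourier calculation from scratch; the identity $\psi(x)=x+\tfrac12\sum_{n}\operatorname{sgn}(n-x)$ with symmetric truncation is delicate enough that one usually bypasses it by working directly with the integral representation $\psi(x)=-\int_{0}^{x}\,dB_{1}$ and convolving against the Beurling functions.
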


Throughout the sequel, $J$ denotes a natural number such that $J\ge \delta^{-1}$ which will be fixed in subsection \ref{finest}. From 
Lemma \ref{Vaaler}, we deduce that
\begin{equation*}
\begin{split}
S_1\ll & \frac{x_2/\mathcal{N}(m)}{J}+ \sum\limits_{1\le |j|\le J} \frac{1}{|j|} \cdot  \Big|
\sum\limits_{\substack{x_1/\mathcal{N}(m)<\mathcal{N}(n)\le x_2/\mathcal{N}(m)\\ \omega_1<\arg(mn)\le \omega_2}} \left(e\left(j(\delta-\Im(mnc))\right)-e\left(j(-\delta-\Im(mnc)\right)
\right) \Big| \\
\ll & \frac{x_2/\mathcal{N}(m)}{J}+ \sum\limits_{1\le |j|\le J} \frac{1}{|j|}\cdot \Big|
\sum\limits_{\substack{x_1/\mathcal{N}(m)<\mathcal{N}(n)\le x_2/\mathcal{N}(m)\\ \omega_1<\arg(mn)\le \omega_2}} \left(e\left(j\delta\right)-e\left(-j\delta\right)\right) \cdot 
e\left(-j\Im(mnc)\right)\Big|\\ 
\ll & \frac{x_2/\mathcal{N}(m)}{J}+ \sum\limits_{1\le |j|\le J} \min\{\delta, |j|^{-1}\} \cdot \Big|
\sum\limits_{\substack{x_1/\mathcal{N}(m)<\mathcal{N}(n)\le x_2/\mathcal{N}(m)\\ \omega_1<\arg(mn)\le \omega_2}} e\left(j\Im(mnc)\right)\Big|.
\end{split}
\end{equation*}
In a similar way, we obtain
\begin{equation*}
\begin{split}
S_2\ll \frac{x_2/\mathcal{N}(m)}{J}+ \sum\limits_{1\le |j|\le J} \min\{\delta, |j|^{-1}\} \cdot \Big|
\sum\limits_{\substack{x_1/\mathcal{N}(m)<\mathcal{N}(n)\le x_2/\mathcal{N}(m)\\ \omega_1<\arg(mn)\le \omega_2}} e\left(j\Re(mnc)\right)\Big|
\end{split}
\end{equation*}
and
\begin{equation*}
\begin{split}
S_3\ll & \frac{x_2/\mathcal{N}(m)}{J^2} + \frac{1}{J}\cdot \sum\limits_{1\le |j_1|\le J} \min\{\delta,|j_1|^{-1}\} \cdot \Big|
\sum\limits_{\substack{x_1/\mathcal{N}(m)<\mathcal{N}(n)\le x_2/\mathcal{N}(m)\\ \omega_1<\arg(mn)\le \omega_2}} e\left(j_1\Im(mnc)\right)\Big| + \\
& \frac{1}{J}\cdot \sum\limits_{1\le |j_2|\le J} \min\{\delta, |j_2|^{-1}\} \cdot \Big|
\sum\limits_{\substack{x_1/\mathcal{N}(m)<\mathcal{N}(n)\le x_2/\mathcal{N}(m)\\ \omega_1<\arg(mn)\le \omega_2}} e\left(j_2\Re(mnc)\right)\Big|+\\
& \sum\limits_{\substack{1\le |j_1|\le J\\ 1\le |j_2|\le J}} \min\{\delta, |j_1|^{-1}\} \cdot \min\{\delta, |j_2|^{-1}\} \cdot \Big|
\sum\limits_{\substack{x_1/\mathcal{N}(m)<\mathcal{N}(n)\le x_2/\mathcal{N}(m)\\ \omega_1<\arg(mn)\le \omega_2}} e\left(j_1\Im(mnc)+j_2\Re(mnc)\right)\Big|.
\end{split}
\end{equation*}
Summing over $m$ and using $|a_m|\le 1$ and $J\ge \delta^{-1}$, we get
\begin{equation} \label{typeI}
\begin{split}
\sum\limits_{\substack{\mathcal{N}(m)\le M\\ mn\in A}} a_m = & 4\delta^2 \sum\limits_{\mathcal{N}(m)\le M} a_m
\sum\limits_{\substack{x_1/\mathcal{N}(m)<\mathcal{N}(n)\le x_2/\mathcal{N}(m)\\ \omega_1<\arg(mn)\le \omega_2}} 1\\ 
& + O\left(\delta x_2^{1+\varepsilon}J^{-1}+\delta (E_1+E_2)+E_3\right)\\
= &
4\delta^2 \sum\limits_{\substack{\mathcal{N}(m)\le M\\ mn\in B}} a_m+
O\left(\delta x_2^{1+\varepsilon}J^{-1}+\delta (E_1+E_2)+E_3\right), 
\end{split}
\end{equation}
where
\begin{equation*}
E_1=\sum\limits_{1\le |j|\le J} \min\{\delta,|j|^{-1}\} \cdot \sum\limits_{\mathcal{N}(m)\le M} \Big|
\sum\limits_{\substack{x_1/\mathcal{N}(m)<\mathcal{N}(n)\le x_2/\mathcal{N}(m)\\ \omega_1<\arg(mn)\le \omega_2}} e\left(j\Im(mnc)\right)\Big|,
\end{equation*}
\begin{equation*}
E_2=\sum\limits_{1\le |j|\le J} \min\{\delta,|j|^{-1}\} \cdot \sum\limits_{\mathcal{N}(m)\le M} \Big|
\sum\limits_{\substack{x_1/\mathcal{N}(m)<\mathcal{N}(n)\le x_2/\mathcal{N}(m)\\ \omega_1<\arg(mn)\le \omega_2}} e\left(j\Re(mnc)\right)\Big|
\end{equation*}
and 
\begin{equation}
\begin{split}
E_3= & \sum\limits_{1\le |j_1|\le J} \sum\limits_{1\le |j_2|\le J} \min\{\delta,|j_1|^{-1}\} \cdot \min\{\delta,|j_2|^{-1}\} 
\times\\ & 
\sum\limits_{\mathcal{N}(m)\le M} \Big|
\sum\limits_{\substack{x_1/\mathcal{N}(m)<\mathcal{N}(n)\le x_2/\mathcal{N}(m)\\ \omega_1<\arg(mn)\le \omega_2}}e\left(j_1\Im(mnc)+j_2\Re(mnc)\right)\Big|.
\end{split}
\end{equation}

In a similar way, using $|a_m|,|b_n|\le 1$ and $J\ge \delta^{-1}$, we derive the asymptotic estimate
\begin{equation} \label{typeII}
\sum\limits_{\substack{x_2^{\alpha}<\mathcal{N}(m)\le x_2^{\alpha+\beta}\\ mn\in A}} a_mb_n = 
4\delta^2 \sum\limits_{\substack{\mathcal{N}(m)\le M\\ mn\in B}} a_mb_n+
O\left(\delta x_2^{1+\varepsilon}J^{-1}+\delta(F_1+F_2)+F_3\right), 
\end{equation}
where
\begin{equation*}
F_1= \sum\limits_{1\le |j|\le J} \min\{\delta,|j|^{-1}\} \cdot 
\Big|\sum\limits_{\substack{x_2^{\alpha}<\mathcal{N}(m)\le x_2^{\alpha+\beta}\\ mn\in A}} 
a_mb_ne\left(j\Im(mnc)\right)\Big|,
\end{equation*}
\begin{equation*}
F_2= \sum\limits_{1\le |j|\le J} \min\{\delta,|j|^{-1}\} \cdot 
\Big|\sum\limits_{\substack{x_2^{\alpha}<\mathcal{N}(m)\le x_2^{\alpha+\beta}\\ mn\in A}} 
a_mb_ne\left(j\Re(mnc)\right)\Big|
\end{equation*}
and 
\begin{equation}
\begin{split}
F_3= & \sum\limits_{1\le |j_1|\le J} \sum\limits_{1\le |j_2|\le J} \min\{\delta,|j_1|^{-1}\} \cdot \min\{\delta,|j_2|^{-1}\}\times\\
& 
\Big|\sum\limits_{\substack{x_2^{\alpha}<\mathcal{N}(m)\le x_2^{\alpha+\beta}\\ mn\in A}} a_mb_ne\left(j_1\Im(mnc)+j_2\Re(mnc)\right)\Big|.
\end{split}
\end{equation}

\subsection{Transformations of the sums $E_i$ and $F_i$} \label{trans}

We note that 
\begin{equation} \label{E1E2}
E_1=E_2.
\end{equation} 
We further have, by breaking the $|j|$-range into $O(\log 2J)$ dyadic intervals, 
\begin{equation} \label{E1H}
E_1\ll (\log 2J) \cdot \sup\limits_{1\le H\le J} \min\{\delta,H^{-1}\} \cdot E_1(H),
\end{equation}
where
\begin{equation}
E_1(H)=\sum\limits_{1\le |j|\le H} \sum\limits_{\mathcal{N}(m)\le M} \Big|
\sum\limits_{\substack{x_1/\mathcal{N}(m)<\mathcal{N}(n)\le x_2/\mathcal{N}(m)\\ \omega_1<\arg(mn)\le \omega_2}} e\left(j\Im(mnc)\right)\Big|.
\end{equation}
Similarly,
\begin{equation} \label{E3rel}
E_3\ll (\log 2J)^2\cdot \sup\limits_{\substack{1\le |H_1|\le J\\ 1\le |H_2|\le J}} \min\{\delta,H_1^{-1}\} \cdot
\min\{\delta,H_2^{-1}\} \cdot E_3(H_1,H_2),
\end{equation}
where 
\begin{equation}
E_3(H_1,H_2)=\mathop{\sum\limits_{|j_1|\le H_1} 
\sum\limits_{|j_2|\le H_2}}_{(j_1,j_2)\not= (0,0)} \sum\limits_{\mathcal{N}(m)\le M} \Big|
\sum\limits_{\substack{x_1/\mathcal{N}(m)<\mathcal{N}(n)\le x_2/\mathcal{N}(m)\\ \omega_1<\arg(mn)\le \omega_2}}e\left(j_1\Im(mnc)+j_2\Re(mnc)\right)\Big|.
\end{equation}
We note that
\begin{equation} \label{Edef}
E_3(H_1,H_2)=\sum\limits_{\substack{j\not=0\\ |\Re(j)|\le H_1\\ |\Im(j)|\le H_2}} \sum\limits_{\mathcal{N}(m)\le M}
\Big| \sum\limits_{\substack{x_1/\mathcal{N}(m)<\mathcal{N}(n)\le x_2/\mathcal{N}(m)\\ \omega_1<\arg(mn)\le \omega_2}} e\left(\Im(jmnc)\right)\Big|
\end{equation}
and hence, 
\begin{equation} \label{E1E3}
E_1(H)= E_3(H,1/2). 
\end{equation}
Thus, it suffices to estimate $E_3(H_1,H_2)$ for $H_1\ge 1$ and $H_2\ge 1/2$ to bound $E_1$, $E_2$ and $E_3$.

Similarly,
\begin{equation} \label{F1F2}
F_1=F_2
\end{equation}
and 
\begin{equation} \label{F1H}
F_1\ll (\log 2J) \cdot \sup\limits_{1\le H\le J} \min\{\delta,H^{-1}\} \cdot F_1(H),
\end{equation}
where
\begin{equation}
F_1(H)=\sum\limits_{1\le |j|\le H} \Big|\sum\limits_{\substack{x_2^{\alpha}<\mathcal{N}(m)\le x_2^{\alpha+\beta}\\ mn\in A}} 
a_mb_ne\left(j\Im(mnc)\right)\Big|,
\end{equation}
and 
\begin{equation} \label{F3rel}
F_3= (\log 2J)^2\cdot \sup\limits_{\substack{1\le |H_1|\le J\\ 1\le |H_2|\le J}} \min\{\delta,H_1^{-1}\} \cdot
\min\{\delta,H_2^{-1}\} \cdot F_3(H_1,H_2),
\end{equation}
where 
\begin{equation} \label{F3}
\begin{split}
F_3(H_1,H_2)= & \mathop{\sum\limits_{|j_1|\le H_1} \sum\limits_{|j_2|\le H_2}}_{(j_1,j_2)
\not=(0,0)}
\Big|\sum\limits_{\substack{x_2^{\alpha}<\mathcal{N}(m)\le x_2^{\alpha+\beta}\\ mn\in A}} 
a_mb_ne\left(j_1\Im(mnc)+j_2\Re(mnc)\right)\Big|\\
= & \sum\limits_{\substack{j\not=0\\ |\Re(j)|\le H_1\\ |\Im(j)|\le H_2}} \Big|\sum\limits_{\substack{x_2^{\alpha}<\mathcal{N}(m)\le x_2^{\alpha+\beta}\\ mn\in A}} 
a_mb_ne\left(\Im(jmnc)\right)\Big|
\end{split}
\end{equation}
and hence,
\begin{equation} \label{F1F3}
F_1(H)=F_3(H,1/2).
\end{equation}
Thus, it suffices to estimate $F_3(H_1,H_2)$ for $H_1\ge 1$ and $H_2\ge 1/2$ to bound $F_1$, $F_2$ and $F_3$.

So we have reduced the problem to bounding the type I sums $E_3(H_1,H_2)$ and the type II sums $F_3(H_1,H_2)$.

\subsection{Treatment of type II sums} \label{treat}
To treat the type II sums, we first reduce them to type I sums. We begin by splitting $F_3(H_1,H_2)$ into subsums of the form
\begin{equation} \label{subsums}
F_3(H_1,H_2,K,K'):=\sum\limits_{\substack{j\not=0\\ |\Re(j)|\le H_1\\ |\Im(j)|\le H_2}} \Big|\sum\limits_{\substack{K<\mathcal{N}(m)\le K'\\ mn\in A}} 
a_mb_ne\left(\Im(jmnc)\right)\Big|,
\end{equation}
where $K<K'\le 2K$. Next, we apply the Cauchy-Schwarz inequality, getting
$$
F_3(H_1,H_2,K,K')^2\ll H_1H_2K\cdot \sum\limits_{\substack{j\not=0\\ |\Re(j)|\le H_1\\ |\Im(j)|\le H_2}} 
\sum\limits_{\substack{K<\mathcal{N}(m)\le K'}} \Big| \sum\limits_{\substack{x_1/\mathcal{N}(m)<\mathcal{N}(n)\le x_2/\mathcal{N}(m)
\\ \omega_1<\arg(mn)\le \omega_2}} 
b_ne\left(\Im(jmnc)\right)\Big|^2, 
$$
where we use the bound $|a_m|\le 1$. 
Expanding the square and re-arranging summation, we get
\begin{equation} \label{F}
\begin{split}
& F_3(H_1,H_2,K,K')^2\ll H_1H_2K\cdot  \sum\limits_{\substack{j\not=0\\ |\Re(j)|\le H_1\\ |\Im(j)|\le H_2}}\sum\limits_{x_1/K'<\mathcal{N}(n_1),\mathcal{N}(n_2) \le x_2/K} b_{n_1}\overline{b_{n_2}}\times\\ 
& \sum\limits_{\substack{\max\{K,x_1/\mathcal{N}(n_1),x_1/\mathcal{N}(n_2)\}<\mathcal{N}(m)\le \min\{K',x_2/\mathcal{N}(n_1),x_2/\mathcal{N}(n_2)\}\\ \omega_1<\arg(mn_{1,2})\le \omega_2}}
e\left(\Im(jm(n_1-n_2)c)\right)\\
\ll & H_1^2H_2^2Kx+ H_1H_2K\cdot \sum\limits_{\substack{j\not=0\\ |\Re(j)|\le H_1\\ |\Im(j)|\le H_2}} 
\sum\limits_{\substack{x_1/K'<\mathcal{N}(n_1),\mathcal{N}(n_2) \le x_2/K\\ n_1\not=n_2}} 
\\ 
& \Big| \sum\limits_{\substack{\max\{K,x_1/\mathcal{N}(n_1),x_1/\mathcal{N}(n_2)\}<\mathcal{N}(m)\le \min\{K',x_2/\mathcal{N}(n_1),x_2/\mathcal{N}(n_2)\}\\ \omega_1<\arg(mn_{1,2})\le \omega_2}}
e\left(\Im(jm(n_1-n_2)c)\right)\Big|\\
\ll & H_1^2H_2^2Kx+ H_1H_2K\cdot \sum\limits_{0<\mathcal{N}(n)\le 4(H_1^2+H_2^2)x_2/K} \sum\limits_{j|n} 
\sum\limits_{\substack{x_1/K'<\mathcal{N}(n_1),\mathcal{N}(n_2) \le x_2/K\\ n/j=n_1-n_2}} \\
& \Big| \sum\limits_{\substack{\max\{K,x_1/\mathcal{N}(n_1),x_1/\mathcal{N}(n_2)\}<\mathcal{N}(m)\le \min\{K',x_2/\mathcal{N}(n_1),x_2/\mathcal{N}(n_2)\}\\ \omega_1<\arg(mn_{1,2})\le \omega_2}}
e\left(\Im(mnc)\right)\Big|.\\
\end{split}
\end{equation}
Here the second line arrives by isolating the diagonal contribution of $n_1=n_2$ and using the bound $|b_n|\le 1$,
and the third line arrives by writing $n=j(n_1-n_2)$.

We note that the summation condition $\omega_1<\arg(mn_{1,2})\le \omega_2$ is equivalent to $f_1<\arg m\le f_2$ for suitable $f_1$ and $f_2$ 
depending on $n_1,n_2,\omega_1,\omega_2$. 

\subsection{Estimating sums of linear exponential sums}
Our next task is to bound linear exponential sums of the form 
\begin{equation} \label{1}
\begin{split}
& \sum\limits_{\substack{\tilde{y}<\mathcal{N}(m)\le y\\ f_1<\arg m\le f_2}} e\left(\Im(m\kappa)\right) =  
\sum\limits_{\substack{(m_1,m_2)\in \mathbb{Z}^2\\ 
\tilde{y}<m_1^2+m_2^2\le y\\ f_1<\arg(m_1+im_2)\le f_2}} 
e\left(\Re(\kappa)m_2+\Im(\kappa)m_1\right)\\ = & \sum\limits_{\substack{(m_1,m_2)\in \mathbb{Z}^2\\ m_1^2+m_2^2\le y\\ f_1<\arg(m_1+im_2)\le f_2}} 
e\left(\Re(\kappa)m_2+\Im(\kappa)m_1\right) - \sum\limits_{\substack{(m_1,m_2)\in \mathbb{Z}^2\\ m_1^2+m_2^2\le \tilde{y}\\ f_1<\arg(m_1+im_2)\le f_2}} 
e\left(\Re(\kappa)m_2+\Im(\kappa)m_1\right),
\end{split}
\end{equation}
where $\kappa$ is a complex number and $0\le \tilde{y}<y$. Here we use the following simple slicing argument. We have
\begin{equation} \label{m_1m_2}
\begin{split} 
& \sum\limits_{\substack{(m_1,m_2)\in \mathbb{Z}^2\\ 
m_1^2+m_2^2\le y\\ f_1<\arg(m_1+im_2)\le f_2}} 
e\left(\Re(\kappa)m_2+\Im(\kappa)m_1\right)\\ = & \sum\limits_{-\sqrt{y}\le m_1\le \sqrt{y}} 
\sum\limits_{\substack{-\sqrt{y-m_1^2}\le  m_2 \le \sqrt{y-m_1^2}\\ f_1<\arg(m_1+im_2)\le f_2}}
e\left(\Re(\kappa)m_2+\Im(\kappa)m_1\right) \\
\ll & \sum\limits_{-\sqrt{y}\le m_1\le \sqrt{y}}  \Big|\sum\limits_{\substack{-\sqrt{y-m_1^2}\le  m_2 \le \sqrt{y-m_1^2}\\ f_1<\arg(m_1+im_2)\le f_2}} 
e\left(\Re(\kappa)m_2\right) \Big|\\
\ll & y^{1/2} \cdot \min\left\{|\Re(\kappa)||^{-1},\sqrt{y}\right\},
\end{split}
\end{equation}
where we observe that the summation condition
$$
-\sqrt{y-m_1^2}\le  m_2 \le \sqrt{y-m_1^2},\quad f_1<\arg(m_1+im_2)\le f_2
$$
on $m_2$ is equivalent to $m_2\in I$ for some interval $I\subseteq [-\sqrt{y},\sqrt{y}]$ depending on $m_1,f_1,f_2$ and 
use the classical bound
$$
\sum\limits_{a<m\le b} e(mz) \ll \min\left\{b-a+1,||z||^{-1}\right\}
$$
for linear exponential sums. Similarly, by interchanging the rules of $m_1$ and $m_2$, we get
\begin{equation*}
\begin{split} 
 \sum\limits_{\substack{(m_1,m_2)\in \mathbb{Z}^2\\ 
m_1^2+m_2^2\le y\\ f_1<\arg(m_1+im_2)\le f_2}} 
e\left(\Re(\kappa)m_2+\Im(\kappa)m_1\right) \ll y^{1/2} \cdot \min\left\{||\Im(\kappa)||^{-1},\sqrt{y}\right\}.
\end{split}
\end{equation*}
Taking the geometric mean of these two estimates gives 
\begin{equation*}
\sum\limits_{\substack{(m_1,m_2)\in \mathbb{Z}^2\\ m_1^2+m_2^2\le y\\ f_1<\arg(m_1+im_2)\le f_2}} 
e\left(\Re(\kappa)m_2+\Im(\kappa)m_1\right) \ll y^{1/2} \cdot 
\min\left\{||\Im(\kappa)||^{-1},\sqrt{y}\right\}^{1/2} \cdot \min\left\{||\Re(\kappa)||^{-1},\sqrt{y}\right\}^{1/2}.
\end{equation*}
Using \eqref{1}, we deduce that
\begin{equation} \label{lin}
\sum\limits_{\substack{\tilde{y}<\mathcal{N}(m)\le y\\ f_1<\arg m\le f_2}} e\left(\Im(m\kappa)\right) \ll y^{1/2} \cdot 
\min\left\{||\Im(\kappa)||^{-1},\sqrt{y}\right\}^{1/2} \cdot \min\left\{||\Re(\kappa)||^{-1},\sqrt{y}\right\}^{1/2}.
\end{equation}

To bound the sums appearing in subsections \ref{trans} and \ref{treat}, we need to bound sums of linear sums of roughly the shape
$$
\sum\limits_{\mathcal{N}(n)\sim Z} \left|\sum\limits_{\substack{\mathcal{N}(m)\sim Y\\ f_1(n)<\arg m\le f_2(n)}} e\left(\Im(mnc)\right)\right|.
$$
Considering \eqref{lin}, we  are left with bounding expressions of the form
\begin{equation} \label{Gc}
G_{c}(y,z):=\sum\limits_{0<\mathcal{N}(n)\le z} \min\left\{||\Im(n c)||^{-1},\sqrt{y}\right\}^{1/2} \cdot 
\min\left\{||\Re(n c)||^{-1},\sqrt{y}\right\}^{1/2},
\end{equation}
where $y,z\ge 1$. To this end, we break the above into partial sums 
\begin{equation}
\begin{split}
& G_{c}(y,z,\Delta_1,\Delta_1',\Delta_2,\Delta_2')\\ := & \sum\limits_{\substack{0<\mathcal{N}(n)\le z\\ \Delta_1< ||\Im(n c)|| \le \Delta_1'\\ 
\Delta_2< ||\Re(n c)||\le \Delta_2'}} 
\min\left\{||\Im(n c)||^{-1},\sqrt{y}\right\}^{1/2} \cdot \min\left\{||\Re(n c)||^{-1},\sqrt{y}\right\}^{1/2} 
\end{split}
\end{equation}
with $0\le \Delta_1< \Delta_1'\le 1/2$ and $0\le \Delta_2< \Delta_2'\le 1/2$ and bound them by 
\begin{equation} \label{G}
G_{c}(y,z,\Delta_1,\Delta_1',\Delta_2,\Delta_2')\ll  
\min\left\{\Delta_1^{-1},\sqrt{y}\right\}^{1/2} \cdot \min\left\{\Delta_2^{-1},\sqrt{y}\right\}^{1/2} \cdot  
\Sigma_{c}(z,\Delta_1',\Delta_2'),
\end{equation}
where
\begin{equation}\label{Sigma}
\Sigma_{c}(z,\Delta_1',\Delta_2') = 
\sum\limits_{\substack{0<\mathcal{N}(n)\le z\\ ||\Im(n c)|| \le \Delta_1'\\ ||\Re(n c)||
\le \Delta_2'}} 1.
\end{equation}

In the next subsection, we shall prove that for infinitely many Gaussian integers $q$, a bound of the form 
\begin{equation} \label{plug}
\Sigma_{c}(z,\Delta_1',\Delta_2')\ll  
\left(1+\frac{z}{|q|^2}\right) \cdot \left(1+\Delta_1'|q|\right)\left(1+\Delta_2'|q|\right)
\end{equation}
holds.  We shall also see that for these $q$, we have
\begin{equation} \label{also}
\Sigma_{c}(z,\Delta_1',\Delta_2')=0 \quad \mbox{if }
\max\{\Delta_1',\Delta_2'\}< 1/(\sqrt{8}|q|) \mbox{ and } z\le |q|^2/8. 
\end{equation}
Plugging \eqref{plug} into \eqref{G} gives
\begin{equation} \label{Gtheta}
\begin{split}
G_{c}(y,z,\Delta_1,\Delta_1',\Delta_2,\Delta_2')\ll &  
\left(1+\frac{z}{|q|^2}\right)\cdot \min\left\{\Delta_1^{-1},\sqrt{y}\right\}^{1/2} \cdot \min\left\{\Delta_2^{-1},\sqrt{y}\right\}^{1/2} \times\\ &  
\left(1+\Delta_1'|q|\right)\cdot \left(1+\Delta_2'|q|\right).
\end{split}
\end{equation}

Next, we write
\begin{equation*}
\begin{split}
G_{c}(y,z)=& G_{c}(y,z,0,2^{-L-1},0,2^{-L-1})+\sum\limits_{i=1}^L \sum\limits_{j=1}^L G_{c}(y,z,2^{-i-1},2^{-i},2^{-j-1},2^{-j})+\\ & + \sum\limits_{j=1}^L 
G_{c}(y,z,0,2^{-L-1},2^{-j-1},2^{-j})+\sum\limits_{i=1}^L 
G_{c}(y,z,2^{-i-1},2^{-i},0,2^{-L-1}),
\end{split}
\end{equation*}
where $L$ satisfies $1/(2\sqrt{y})\le 2^{-L-1}< 1/\sqrt{y}$. Using \eqref{Gtheta}, we deduce that
\begin{equation} \label{firstcase}
\begin{split}
G_{c}(y,z)\ll& \left(1+\frac{z}{|q|^2}\right)\left(y^{1/2}+|q|^2\right) \cdot (\log 2y)^2.
\end{split}
\end{equation}
If $z\le |q|^2/8$, then using \eqref{also}, we have 
\begin{equation*}
\begin{split}
G_{c}(y,z)=& \sum\limits_{i=1}^L \sum\limits_{j=1}^L G_{c}(y,z,2^{-i-1},2^{-i},2^{-j-1},2^{-j})+\\ & + \sum\limits_{j=1}^L 
G_{c}(y,z,0,2^{-L-1},2^{-j-1},2^{-j})+\sum\limits_{i=1}^L 
G_{c}(y,z,2^{-i-1},2^{-i},0,2^{-L-1}),
\end{split}
\end{equation*}
where $1/(2\sqrt{8}|q|)\le 2^{-L-1}< 1/(\sqrt{8}|q|)$. In this case, using \eqref{Gtheta}, we deduce that
\begin{equation} \label{secondcase}
\begin{split}
G_{c}(y,z)\ll \left(|q|y^{1/4}+|q|^{2}\right)\cdot \log^2(2|q|).
\end{split}
\end{equation}

\subsection{Counting}
In this subsection, we prove \eqref{plug} and \eqref{also}. To bound the quantity $G_{c}(y,z)$, 
we need information about the spacing of the points $nc$ modulo 1, where 
$n\in \mathbb{Z}[i]$.   
We begin by using the Hurwitz continued fraction development of $c$ in $\mathbb{Z}[i]$ (see \cite{Hur}) to approximate $c$ in the form 
$$
c=\frac{a}{q}+\gamma,
$$
where $a,q\in \mathbb{Z}[i]$, $(a,q)=1$ and 
$$
|\gamma| \le |q|^{-2}.
$$
As in the classical case, this continued fraction development yields a sequence of infinitely many $q\in \mathbb{Z}[i]$ satisfying the above. 
Now it follows that 
\begin{equation} \label{dist}
\begin{split}
& \left|\left|n_1c-n_2c\right|\right|=\left|\left| \frac{(n_1-n_2)a}{q} + (n_1-n_2)\gamma\right|\right| \ge
\left|\left| \frac{(n_1-n_2)a}{q} \right|\right| - |n_1-n_2|\cdot |\gamma|\\
\ge & \frac{1}{\sqrt{2}|q|}- \frac{|n_1-n_2|}{|q|^2} 
\end{split}
\end{equation}
if $n_1,n_2\in \mathbb{Z}[i]$ such that $n_1\not\equiv n_2\bmod{q}$. We cover the set 
$$
\mathcal{Z}:=\{n\in \mathbb{Z}[i]\ :\ 0<\mathcal{N}(n)\le z\}
$$
by $O\left(1+z/|q|^2\right)$ disjoint rectangles 
$$
\mathcal{R}=\{s\in \mathbb{C}\ :\ a_1< \Re(s)\le b_1,\ a_2< \Im(s)\le b_2\},
$$
where $|b_i-a_i|\le |q|/4$, so that 
$$
\mathcal{Z}\subset \bigcup\limits_{\mathcal{R}} \mathcal{R}.
$$ 
Note that if $n_1,n_2\in \mathbb{Z}[i]\cap \mathcal{R}$, then $|n_1-n_2|\le |q|/(2\sqrt{2})$ and
hence, by \eqref{dist}, if $n_1,n_2\in \mathbb{Z}[i]\cap \mathcal{R}$ and $n_1\not=n_2$, then 
\begin{equation} \label{dist2}
\begin{split}
\left|\left|n_1c-n_2c\right|\right|\ge \frac{1}{2\sqrt{2}|q|}. 
\end{split}
\end{equation}

Now, 
\begin{equation*}
\Sigma_{c}(z,\Delta_1',\Delta_2') \le \sum\limits_{\mathcal{R}} \Sigma_{c}(\mathcal{R},\Delta_1',\Delta_2'),
\end{equation*}
where 
\begin{equation} \label{of}
\begin{split}
& \Sigma_{c}(\mathcal{R},\Delta_1',\Delta_2') := 
\sum\limits_{\substack{n\in \mathbb{Z}[i]\cap \mathcal{R}\\ ||\Im(n c)||\le \Delta_1'\\ ||\Re(n c)|| \le \Delta_2'}} 1 \\
= & \sum\limits_{\substack{n\in \mathbb{Z}[i]\cap\mathcal{R}\\ \{\Im(n c)\}\le \Delta_1'\\ \{\Re(n c)\} \le \Delta_2'}} 1 +
\sum\limits_{\substack{n\in \mathbb{Z}[i]\cap\mathcal{R}\\ \{\Im(n c)\}\ge 1-\Delta_1'\\ \{\Re(n c)\} \le \Delta_2'}} 1 +
\sum\limits_{\substack{n\in \mathbb{Z}[i]\cap\mathcal{R}\\ \{\Im(n c)\}\le \Delta_1'\\ \{\Re(n c)\} \ge 1-\Delta_2'}} 1 +
\sum\limits_{\substack{n\in \mathbb{Z}[i]\cap\mathcal{R}\\ \{\Im(n c)\}\ge 1-\Delta_1'\\ \{\Re(n c)\} \ge 1-\Delta_2'}} 1.
\end{split}
\end{equation}
If $\{\Im(n_i c)\}\le \Delta_1'\le 1/2$ and $\{\Re(n_i c)\} \le \Delta_2' \le 1/2$ for $i=1,2$, then
$$
\left|(\{\Re(n_1c)\}+i\{\Im(n_1c)\})-(\{\Re(n_2c)\}+i\{\Im(n_2c)\})\right|\ge
\left|\left|n_1c-n_2c\right|\right|,
$$
and hence, by \eqref{dist2}, if $n_1,n_2\in \mathbb{Z}[i]\cap \mathcal{R}$ and $n_1\not=n_2$, then
$$
\left|(\{\Re(n_1c)\}+i\{\Im(n_1c)\})-(\{\Re(n_2c)\}+i\{\Im(n_2c)\})\right|\ge \frac{1}{2\sqrt{2}|q|}.
$$
It follows that 
$$
\sum\limits_{\substack{n\in \mathbb{Z}[i]\cap\mathcal{R}\\ \{\Im(n c)\}\le \Delta_1'\\ \{\Re(n c)\} \le \Delta_2'}} 1\ll 
V_{1/(2\sqrt{2}|q|)}\left(\Delta_1',\Delta_2'\right),
$$
where $V_D\left(\Delta_1'\Delta_2'\right)$ is the maximal number of points of distance $\ge D$ that can be put 
into a rectangle with dimensions $\Delta_1'$ and $\Delta_2'$. The remaining three sums in the last line of \eqref{of} can be
estimated similarly. It follows that
$$
\Sigma_{c}(\mathcal{R},\Delta_1',\Delta_2')\ll V_{1/(2\sqrt{2}|q|)}\left(\Delta_1',\Delta_2'\right).
$$
Clearly,
$$
V_D\left(\Delta_1',\Delta_2'\right)\ll \left(1+\frac{\Delta_1'}{D}\right)\left(1+\frac{\Delta_2'}{D}\right). 
$$ 
Putting everything together, we obtain \eqref{plug}. Further, \eqref{also} holds
because $0<\mathcal{N}(n)\le |q|^2/8$ implies
\begin{equation} 
\left|\left|nc\right|\right|=\left|\left| \frac{na}{q}+ n\gamma\right|\right| \ge
\left|\left| \frac{na}{q}\right|\right| - |n|\cdot |\gamma|\ge \frac{1}{\sqrt{2}|q|}- \frac{\sqrt{|q|^2/8}}{|q|^2}=\frac{1}{\sqrt{8}|q|}. 
\end{equation} 

\subsection{Final estimations of the sums $E_i$ and $F_i$} \label{finest}
In this section, we set $x:=x_2$ for simplicity. We recall the conditions $H_1\ge 1$ and $H_2\ge 1/2$. Combining \eqref{F}, \eqref{lin}, \eqref{Gc} and \eqref{firstcase}, we get
\begin{equation*} 
\begin{split}
& F_3(H_1,H_2,K,K')^2 \\
\ll & (H_1H_2x)^{\varepsilon}\cdot \left(H_1^2H_2^2xK+ H_1H_2xK^{1/2}\cdot 
\left(1+\frac{\left(H_1^2+H_2^2\right)x/K}{|q|^2}\right)\left(K^{1/2}+|q|^2\right)\right),
\end{split}
\end{equation*}
where we use the facts that the number $\tau(n)$ of divisors $j$ of $n$ is $O\left(\mathcal{N}(n)^{\varepsilon}\right)$ and that the number of solutions 
$(n_1,n_2)$ with $x_1/(2K')<\mathcal{N}(n_1),\mathcal{N}(n_2)\le x_2/K$ of the equation $n/j=n_1-n_2$ is $O(x/K)$. 
Multiplying out and taking square root yields
\begin{equation} \label{F3part} 
\begin{split}
F_3(H_1,H_2,K,K')
\ll & (H_1H_2x)^{\varepsilon}\cdot \Big(H_1H_2(xK)^{1/2}+ (H_1H_2)^{1/2} \times\\
& \left((H_1+H_2)x|q|^{-1}+(H_1+H_2)x K^{-1/4}+|q| x^{1/2}K^{1/4}\right)\Big).
\end{split}
\end{equation}
Recall the definition of $F_3(H_1,H_2)$ in \eqref{F3}. From \eqref{F3part}, we conclude that
\begin{equation} \label{F3est} 
\begin{split}
F_3(H_1,H_2)
\ll & (H_1H_2x)^{\varepsilon}\cdot \Big(H_1H_2x^{(1+\alpha+\beta)/2}+ (H_1H_2)^{1/2} \times\\
& \left((H_1+H_2)x|q|^{-1}+(H_1+H_2)x^{1-\alpha/4}+|q|x^{1/2+(\alpha+\beta)/4}\right)\Big)
\end{split}
\end{equation}
by splitting the summation range of $\mathcal{N}(m)$ into $O(\log 2x)$ dyadic intervals $(K,K']$. 

We also split $E_3(H_1,H_2)$, defined in \eqref{Edef}, into $O(\log 2M)$ parts
\begin{equation*} 
E_3(H_1,H_2,K,K') := \sum\limits_{\substack{j\not=0\\ |\Re(j)|\le H_1\\ |\Im(j)|\le H_2}} \sum\limits_{K<\mathcal{N}(m)\le K'}
\Big| \sum\limits_{\substack{x_1/\mathcal{N}(m)<\mathcal{N}(n)\le x_2/\mathcal{N}(m)\\ \omega_1<\arg(mn)\le \omega_2}} e\left(\Im(jmnc)\right)\Big|
\end{equation*}
with $1/2\le K<K'\le 2K$, which, using \eqref{lin}, \eqref{Gc} and \eqref{firstcase}, we estimate by
\begin{equation}
\begin{split}
& E_3(H_1,H_2,K,K')\\ \ll & \left(x/K\right)^{1/2}\cdot
\sum\limits_{\substack{j\not=0\\ |\Re(j)|\le H_1\\ |\Im(j)|\le H_2}} \sum\limits_{K<\mathcal{N}(m)\le K'}
\min\left\{||\Re(jmc)||^{-1},\sqrt{x/K}\right\}^{1/2}\cdot  \min\left\{||\Im(jmc)||^{-1},\sqrt{x/K}\right\}^{1/2}\\
\ll & x^{\varepsilon} \cdot \left(x/K\right)^{1/2}\cdot \sum\limits_{0<\mathcal{N}(l)\le (H_1^2+H_2^2)K'}
\min\left\{||\Re(lc)||^{-1},\sqrt{x/K}\right\}^{1/2}\cdot \min\left\{||\Im(lc)||^{-1},\sqrt{x/K}\right\}^{1/2}\\
\ll & x^{\varepsilon}\cdot \left(x/K\right)^{1/2} \cdot 
\left(1+\frac{(H_1^2+H_2^2)K}{|q|^2}\right)\left(\left(x/K\right)^{1/2}+|q|^2\right)\\
\ll & x^{\varepsilon} \cdot
\left(xK^{-1}+(H_1^2+H_2^2)x|q|^{-2}+(H_1^2+H_2^2)x^{1/2}K^{1/2}+|q|^2x^{1/2}K^{-1/2}\right).
\end{split}
\end{equation}
If $(H_1^2+H_2^2)K'\le |q|^2/8$, then using \eqref{secondcase} instead of \eqref{firstcase}, we obtain
\begin{equation}
\begin{split}
E_3(H_1,H_2,K,K') \ll (x|q|)^{\varepsilon}\left(|q|x^{3/4}K^{-3/4}+|q|^2x^{1/2}K^{-1/2}\right).
\end{split}
\end{equation}
We deduce that for all $K\ge 1/2$, 
\begin{equation}
\begin{split}
& E_3(H_1,H_2,K,K')\ll (x|q|)^{\varepsilon} \times\\ &  \left((H_1^2+H_2^2)x|q|^{-2}+(H_1^2+H_2^2)x^{1/2}K^{1/2}+|q|x^{3/4}K^{-3/4}+
|q|^2x^{1/2}K^{-1/2}\right)
\end{split}
\end{equation}
which implies
\begin{equation} \label{E3end}
\begin{split}
& E_3(H_1,H_2)\ll (x|q|)^{\varepsilon}\times\\
& \left((H_1^2+H_2^2)x|q|^{-2}+(H_1^2+H_2^2)x^{1/2}M^{1/2}+|q|x^{3/4}+|q|^2x^{1/2}\right).
\end{split}
\end{equation}

Now, from \eqref{E3rel} and \eqref{E3end}, we obtain
\begin{equation} \label{E3ende}
E_3\ll (Jx|q|)^{\varepsilon} \cdot \left(\delta^2 J^2x|q|^{-2}+\delta^2 J^2x^{1/2}M^{1/2}+\delta^2|q|x^{3/4}+\delta^2|q|^2x^{1/2}\right),
\end{equation}
where we use the inequality
$$
\min\{\delta,H_1^{-1}\}\cdot \min\{\delta,H_2^{-1}\} \le \delta^2,
$$
and from \eqref{F3rel} and \eqref{F3est}, we obtain
\begin{equation} \label{F3ende}
F_3 \ll (Jx)^{\varepsilon}\left(x^{(1+\alpha+\beta)/2}+
\delta Jx|q|^{-1}+\delta J x^{1-\alpha/4}+\delta |q| x^{1/2+(\alpha+\beta)/4}\right),
\end{equation}
where we use the inequalities
$$
\min\{\delta,H_1^{-1}\}\cdot \min\{\delta,H_2^{-1}\} \le (H_1H_2)^{-1}
$$
and
$$
\min\{\delta,H_1^{-1}\}\cdot \min\{\delta,H_2^{-1}\} \le \delta(H_1H_2)^{-1/2}
$$
(the first for the diagonal, the second for the non-diagonal contribution).

Further, from \eqref{E1E2}, \eqref{E1H}, \eqref{E1E3} and \eqref{E3end}, we infer
\begin{equation} \label{E1E2ende}
E_1, E_2\ll (Jx|q|)^{\varepsilon} \cdot \left(\delta J^2x|q|^{-2}+\delta J^2x^{1/2}M^{1/2}+\delta|q|x^{3/4}+\delta |q|^2x^{1/2}\right), 
\end{equation}
where we use the inequality
$$
\min\{\delta,H^{-1}\}\le \delta,
$$
and from \eqref{F1F2}, \eqref{F1H}, \eqref{F1F3} and \eqref{F3ende}, we infer
\begin{equation} \label{F1F2ende}
\begin{split}
F_1, F_2\ll (Jx|q|)^{\varepsilon}\cdot
\left(x^{(1+\alpha+\beta)/2}+
\delta^{1/2} Jx|q|^{-1}+\delta^{1/2} J x^{1-\alpha/4}+\delta^{1/2} |q| x^{1/2+(\alpha+\beta)/4}\right),
\end{split}
\end{equation}
where we use the inequalities
$$
\min\{\delta,H^{-1}\}\le H^{-1}\quad \mbox{and} \quad \min\{\delta,H^{-1}\}\le \delta^{1/2}H^{-1/2}.
$$

Combing \eqref{typeI}, \eqref{E3ende} and \eqref{E1E2ende}, we obtain
\begin{equation} \label{typeIest}
\begin{split}
& \sum\limits_{\substack{\mathcal{N}(m)\le M\\ mn\in A}} a_m = 4\delta^2 \sum\limits_{\substack{\mathcal{N}(m)\le M\\ mn\in B}} a_m+\\ &
O\left((Jx|q|)^{\varepsilon}\cdot\left(\delta x J^{-1}+\delta^2 J^2x|q|^{-2}+\delta^2 J^2x^{1/2}M^{1/2}+
\delta^2|q|x^{3/4}+\delta^2|q|^2x^{1/2}\right)\right), 
\end{split}
\end{equation}
and combining \eqref{typeII}, \eqref{F3ende} and \eqref{F1F2ende}, we obtain
\begin{equation} \label{typeIIest}
\begin{split}
& \sum\limits_{\substack{x_2^{\alpha}<\mathcal{N}(m)\le x_2^{\alpha+\beta}\\ mn\in A}} a_mb_n = 
4\delta^2 \sum\limits_{\substack{\mathcal{N}(m)\le M\\ mn\in B}} a_mb_n+ \\ &
O\left((Jx)^{\varepsilon}\cdot \left(\delta xJ^{-1}+x^{(1+\alpha+\beta)/2}+
\delta Jx|q|^{-1}+\delta J x^{1-\alpha/4}+\delta |q| x^{1/2+(\alpha+\beta)/4}\right)\right). 
\end{split}
\end{equation}

Now we choose $J:=[\delta^{-1}x^{3\varepsilon}]$, $x:=|q|^{12}$ (and hence $|q|:=x^{1/12}$), $M= x^{2/3}$, 
$\alpha:=1/3$ and $\beta:=1/2$ so that
\begin{equation} \label{typeIesti}
\sum\limits_{\substack{\mathcal{N}(m)\le M\\ mn\in A}} a_m = 4\delta^2 \sum\limits_{\substack{\mathcal{N}(m)\le M\\ mn\in B}} a_m+
O\left(\delta^2x^{1-\varepsilon}+x^{5/6+8\varepsilon}\right) 
\end{equation}
and 
\begin{equation} \label{typeIIesti}
\begin{split}
\sum\limits_{\substack{x_2^{\alpha}<\mathcal{N}(m)\le x_2^{\alpha+\beta}\\ mn\in A}} a_mb_n = 
4\delta^2 \sum\limits_{\substack{\mathcal{N}(m)\le M\\ mn\in B}} a_mb_n+ O\left(\delta^2x^{1-\varepsilon}+x^{11/12+8\varepsilon}\right).
\end{split}
\end{equation}

\subsection{Conclusion} \label{con}
Having proved \eqref{typeIesti} and \eqref{typeIIesti}, we deduce that
\eqref{t1} and \eqref{t2} hold with $Y=\delta^2x_2^{1-\varepsilon}$ if $\delta\ge x_2^{-1/24+5\varepsilon}$. 
Now using Theorem \ref{Harsie}, \eqref{primenumber}, \eqref{Sxd} and \eqref{Sx}, it follows that
$$
\sum\limits_{\substack{x_1<\mathcal{N}(p)\le x_2\\ ||pc ||< \delta\\ \omega_1<\arg p \le \omega_2}} 1 =
4\delta^2 \sum\limits_{\substack{x_1<\mathcal{N}(p)\le x_2}} 1 + O\left(x_2^{1/2}\right),
$$
provided that $x_2=|q|^{12}$, where $a/q$ is a Hurwitz continued fraction approximant of $c$ and
$\delta\ge x_2^{-1/24+\varepsilon}$ for any fixed $\varepsilon>0$. So by taking $N_k=|q_k|^{12}$, where $q_k$ is the $k$-th Hurwitz continued fraction denominator for $c$, 
we have the following result.

\begin{theorem} \label{approx}
Let $c$ be a complex number such that $c\not\in \mathbb{Q}(i)$, $\varepsilon>0$ be an arbitrary 
constant and $-\pi\le \omega_1< \omega_2\le \pi$. 
Then there exists an infinite increasing sequence of natural numbers $(N_k)_{k\in \mathbb{N}}$ such that 
\begin{equation*}
\sum\limits_{\substack{x<\mathcal{N}(p)\le N_k\\ ||pc ||\le\delta_k\\ \omega_1<\arg p\le \omega_2}} 1 =
4\delta_k^2 \sum\limits_{\substack{x<\mathcal{N}(p)\le N_k\\ \omega_1<\arg p\le \omega_2}} 1 + O\left(N_k^{1/2}\right)
\quad \mbox{as } k\rightarrow \infty
\end{equation*}
if $x<N_k$ and $N_k^{-1/24+\varepsilon}\le \delta_k\le 1/2$. 
\end{theorem}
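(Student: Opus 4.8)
The plan is to obtain Theorem~\ref{approx} as the endpoint of Harman's sieve: feed the type~I and type~II bilinear estimates of the earlier subsections into Theorem~\ref{Harsie} with one admissible choice of parameters, and then translate the sifted quantities $\mathcal{S}(A,\mathbb{G},x_2^{1/2})$, $\mathcal{S}(B,\mathbb{G},x_2^{1/2})$ back into the prime sums $S_c$ and $S$. The point of the last step is that a Gaussian integer $n$ with $x_1<\mathcal{N}(n)\le x_2$ coprime to every Gaussian prime of norm $\le x_2^{1/2}$ is necessarily a Gaussian prime (a second prime factor would push the norm above $x_2$), so the passage is exact up to the $O(x_2^{1/2})$ error already recorded in \eqref{Sxd} and \eqref{Sx}, and the main term on the $B$-side is pinned down by the prime number theorem \eqref{primenumber}.

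First I would fix the parameters. By \eqref{typeIesti} and \eqref{typeIIesti} (with $x=x_2$), for all coefficients with $|a_m|,|b_n|\le 1$ the type~I sum over $\mathcal{N}(m)\le M$ and the type~II sum over $x_2^{\alpha}<\mathcal{N}(m)\le x_2^{\alpha+\beta}$ match their $B$-analogues with multiplier $\lambda=4\delta^2$ and errors $O(\delta^2x_2^{1-\varepsilon}+x_2^{5/6+8\varepsilon})$, respectively $O(\delta^2x_2^{1-\varepsilon}+x_2^{11/12+8\varepsilon})$, once we take $\alpha=1/3$, $\beta=1/2$, $M=x_2^{2/3}$, $J=[\delta^{-1}x_2^{3\varepsilon}]$ and $x_2=|q|^{12}$, with $a/q$ a Hurwitz continued fraction convergent of $c$. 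Here $J\gg\delta^{-1}$ is forced by Vaaler's approximation (Lemma~\ref{Vaaler}), the split $(\alpha,\beta)=(1/3,1/2)$ balances the competing type~II errors $x_2^{(1+\alpha+\beta)/2}$ and $x_2^{1-\alpha/4}$, $M=x_2^{2/3}>x_2^{\alpha}$ with $\beta=1/2$ makes the configuration admissible for Theorem~\ref{Harsie}, and the normalisation $x_2=|q|^{12}$ (so $|q|=x_2^{1/12}$) is what collapses the counting bounds \eqref{plug}, \eqref{also} into clean powers of $x_2$. The decisive inequality is that, at the bottom of the allowed range $\delta\approx x_2^{-1/24}$, the type~II leftover $x_2^{11/12+8\varepsilon}$ is just swallowed by $\delta^2x_2^{1-\varepsilon}$, since $11/12\le 1-1/12$; concretely, for $\delta\ge x_2^{-1/24+5\varepsilon}$ both \eqref{t1} and \eqref{t2} hold with $\lambda=4\delta^2$ and $Y=\delta^2x_2^{1-\varepsilon}$, uniformly in $\omega_1,\omega_2$. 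This is exactly where the exponent $1/24$ — equivalently $1/12$ in $|p|$, since $\mathcal{N}(p)=|p|^2$ — appears as the limit of the method.

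With the hypotheses verified, Theorem~\ref{Harsie} gives
$$
\mathcal{S}(A,\mathbb{G},x_2^{1/2})=4\delta^2\,\mathcal{S}(B,\mathbb{G},x_2^{1/2})+O\!\left(\delta^2 x_2^{1-\varepsilon}\log^3 x_2\right),
$$
and then \eqref{Sxd}, \eqref{Sx} turn this into
$$
S_c(x_1,x_2,\omega_1,\omega_2;\delta)=4\delta^2\,S(x_1,x_2,\omega_1,\omega_2)+O\!\left(\delta^2 x_2^{1-\varepsilon}\log^3 x_2+x_2^{1/2}\right),
$$
the error being of strictly smaller order than the main term $\asymp\delta^2 x_2/\log x_2$ furnished by \eqref{primenumber}. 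Since $c\notin\mathbb{Q}(i)$, the Hurwitz algorithm supplies infinitely many convergents $a_k/q_k$ with $|q_k|\to\infty$; setting $N_k:=|q_k|^{12}$ produces an increasing sequence of natural numbers, depending only on $c$, and applying the above with $x_2=N_k$, $x_1=x$ gives the asserted asymptotic whenever $x<N_k$ and $N_k^{-1/24+\varepsilon}\le\delta_k\le 1/2$ (after relabelling $\varepsilon$).

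The genuine work is all upstream and may be invoked here: the reduction of $\|nc\|\le\delta$ to linear exponential sums via the Vaaler polynomial, the Cauchy--Schwarz-plus-divisor reduction of the type~II sums to type~I sums, the bound \eqref{lin} for linear exponential sums over Gaussian integers in a sector, and — most delicate — the spacing estimates \eqref{plug}, \eqref{also} for the points $nc\bmod 1$ extracted from the Hurwitz continued fraction of $c$. Granting those, the only real task in the concluding step is the simultaneous bookkeeping of the four error terms in each of \eqref{typeIesti}, \eqref{typeIIesti} under the single admissible choice of $(\alpha,\beta,M,J,x_2)$; this is the main, though essentially mechanical, obstacle, and the three logarithms lost in Harman's sieve are harmless thanks to the $x_2^{\varepsilon}$ slack built into $Y$.
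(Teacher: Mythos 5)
Your argument is correct and is essentially the paper's own proof: you choose the same parameters $(\alpha,\beta,M,J,x_2)=\bigl(1/3,\,1/2,\,x_2^{2/3},\,[\delta^{-1}x_2^{3\varepsilon}],\,|q|^{12}\bigr)$, feed the type~I/II estimates \eqref{typeIesti} and \eqref{typeIIesti} into Theorem~\ref{Harsie} with $\lambda=4\delta^2$ and $Y=\delta^2x_2^{1-\varepsilon}$, translate the sifted counts back to prime sums via \eqref{Sxd}, \eqref{Sx} and \eqref{primenumber}, and then take $N_k=|q_k|^{12}$ with $q_k$ the Hurwitz continued fraction denominators. Your bookkeeping is in fact slightly more careful than the paper's final display in that it retains the $O(\delta^2 x_2^{1-\varepsilon}\log^3 x_2)$ term from Harman's sieve alongside the $O(x_2^{1/2})$ from \eqref{Sxd}, \eqref{Sx}.
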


This is implies Theorems \ref{coro} and \ref{signi1}. 

\subsection{Notes}
(I) The bound \eqref{lin} for linear exponential sums over $\mathbb{Z}[i]$ was obtained in a very simple way by reduction to one-dimensional 
linear exponential sums. Certainly, refinements are possible under certain conditions, and this may be useful for other applications. However,
it seems that improvements of \eqref{lin} and the subsequent bounds for averages of linear exponential sums don't help in this context
because the terms that dominate here cannot be removed, in particular, the term $\frac{z}{|q|^2}\cdot y^{1/2}$ in \eqref{firstcase}. 
So improvements of \eqref{lin}
will most likely not lead to progress with regard to the problem considered here.\\ 

(II) It should be possible to improve the exponent 1/12 in Theorems \ref{complex} and \ref{coro} using lower bound sieves.
To improve this exponent in the asymptotic relation in Theorem \ref{signi1} as well, different
techniques (like bounds for Koosterman-type sums) will be required. This may be an interesting
line of future research.\\

(III) Another interesting line could be to investigate Diophantine approximation problems of
this type for general number fields.

\section{Proof of Theorem \ref{Theo}(i)}
In this section, we prove Theorem \ref{Theo}(i).
Following the treatment in section 3, an admissible choice for the $M_k$'s in Theorem \ref{signi1} and Corollary \ref{signi} are the 
sixth powers of absolute values of the Hurwitz continued fraction approximants of $c$. Here we note that $M_k:=N_k^{1/2}$. 
Throughout the remainder of this paper, 
we assume that the $M_k$'s are of this form, set 
$$
\mathcal{S}:=\{M_1,M_2,...\}
$$
and suppose that $N\in \mathcal{S}$. Further, we write
$$
B_{\Delta}\left(a\right):= \left\{x\in \mathbb{C}\ :\ |x-a|\le \Delta\right\}
$$
and keep the notation $\mathbb{G}$ for the set of Gaussian primes. 
Let 
$$
\mathcal{A}_p=\bigcup_{\substack{r\in \mathbb{G}\\ q\in \mathbb{Z}[i]}} B_{|\eta/p|}\left(
\frac{r}{p}\right) \cap B_{|\eta/(cp)|}\left(\frac{1}{c}\cdot \frac{q}{p}\right) \cap D(a,b,\gamma_1,\gamma_2),
$$
where $\eta:=|p|^{\varepsilon-1/12}$. Then
\begin{equation} \label{integral}
\int\limits_{\gamma_1}^{\gamma_2}\int\limits_{a}^{b} F_N\left(re^{i\theta}\right)\ dr\ d\theta =
\sum\limits_{\substack{p \in \mathbb{G}\\ |p|\le N}} \mu(\mathcal{A}_p).
\end{equation}

Set
\begin{equation} \label{Mdefi}
M:=(a+b)/(2a)
\end{equation}
and 
$$
L:=4\left[\frac{2\pi}{\gamma_2-\gamma_1}\right]. 
$$
Our strategy is to split the summation over $p$ on the right-hand side of \eqref{integral} into summations over sets of the form 
$D(P,MP,\omega,\omega+2\pi/L)$ with $MP\le N$ and derive lower bounds. Clearly,
\begin{equation} \label{AB}
\sum\limits_{p \in \mathbb{G} \cap D(P,MP,\omega,\omega+2\pi/L) } \mu(\mathcal{A}_p)\ge
\sum\limits_{p \in \mathbb{G} \cap D(P,MP,\omega,\omega+2\pi/L) } \mu(\mathcal{B}_p)
\end{equation}
with
$$
\mathcal{B}_p=\bigcup_{\substack{r\in \mathbb{G}\\ q\in \mathbb{Z}[i]}} B_{|\eta'/p|}\left(
\frac{r}{p}\right) \cap B_{|\eta'/(cp)|}\left(\frac{1}{c}\cdot \frac{q}{p}\right) \cap D(a,b,\gamma_1,\gamma_2),
$$
where 
\begin{equation} \label{etadef}
\eta':=(MP)^{\varepsilon-1/12}.
\end{equation}

We note that if $|p|\le MP$ and
$$
\frac{1}{c}\cdot \frac{q}{p} \in B_{|\eta'/p|}\left(\frac{r}{p}\right),
$$
which latter is equivalent to
$$
q \in B_{|\eta'c|}(rc),
$$
then 
$$
\mu\left(B_{|\eta'/p|}\left(
\frac{r}{p}\right) \cap B_{|\eta'/(cp)|}\left(\frac{1}{c}\cdot \frac{q}{p}\right)\right)\ge \nu,
$$
where
\begin{equation} \label{nudef}
\nu:=\left(\frac{\pi}{3}-\frac{\sqrt{3}}{2}\right)\cdot \left|\frac{\eta'}{MP}\right|^2=\left(\frac{\pi}{3}-\frac{\sqrt{3}}{2}\right)\cdot (MP)^{2\varepsilon-13/6}.
\end{equation}
Here we use our condition that $0<|c|\le 1$.
Also, for all $p\in D(P,MP,\omega,\omega+2\pi/L)$,
$$
r\in D(MPa,Pb,\gamma_1+\omega+2\pi/L,\gamma_2+\omega) \Longrightarrow \frac{r}{p}\in D(a,b,\gamma_1,\gamma_2).
$$
We thus have 
\begin{equation} \label{key}
\sum\limits_{p\in D(P,MP,\omega,\omega+2\pi/L)} \mu(\mathcal{B}_p) \ge \nu N(P,\omega),
\end{equation}
where $N(P,\omega)$ counts the number of $(p,q,r)\in \mathbb{G} \times \mathbb{Z}[i]\times \mathbb{G}$ satisfying 
\begin{equation} \label{qprconds}
 p\in D(P,MP,\omega,\omega+2\pi/L), \quad q\in B_{\delta}(rc), \quad r\in D(MPa,Pb,\gamma_1+\omega+2\pi/L,\gamma_2+\omega),  
\end{equation}
where
\begin{equation} \label{deltadef}
\delta:=|\eta' c|=\frac{|c|}{(MP)^{1/12-\varepsilon}}.
\end{equation}
We note that
\begin{equation} \label{note1}
\frac{1}{4}\cdot \left(b^2-a^2\right)\cdot P^2 \le (Pb)^2-(MPa)^2=\frac{3}{4}\cdot \left(b^2-a^2\right) \cdot P^2
\end{equation}
and
\begin{equation} \label{note2}
\frac{\gamma_2-\gamma_1}{2}\le (\gamma_2+\omega)-(\gamma_1+\omega+2\pi/L)\le \gamma_2-\gamma_1.
\end{equation}

Using Theorem \ref{PNT}, the number $\pi(P,MP,\omega,\omega+2\pi/L)$ of Gaussian primes \\
$p\in D(P,MP,\omega,\omega+2\pi/L)$ is bounded from below by 
\begin{equation} \label{pcount}
\pi(P,MP,\omega,\omega+2\pi/L) \ge \frac{4}{L}\cdot \frac{(M^2-1)P^2+o(L(MP)^2)}{2\log N}.
\end{equation}
The number of $(q,r)\in \mathbb{Z}[i]\times \mathbb{G}$ satisfying
$$
q\in B_{\delta}(rc), \quad r\in D(MPa,Pb,\gamma_1+\omega+2\pi/L,\gamma_2+\omega)
$$ 
equals $\pi_c(MPa,Pb,\gamma_1+\omega+2\pi/L,\gamma_2+\omega;\delta)$ and is, by Corollary \ref{signi}, bounded from below by
\begin{equation} \label{qrcount}
\begin{split}
& \pi_c(MPa,Pb,\gamma_1+\omega+2\pi/L,\gamma_2+\omega;\delta)\\ 
\ge & 
\frac{\delta^2\left((\gamma_2+\omega)-(\gamma_1+\omega+2\pi/L)\right)((MPa)^2-(Pb)^2)+o(\delta^2 N^2)}{\log N}.
\end{split}
\end{equation}

Combining \eqref{AB}, \eqref{nudef}, \eqref{key}, \eqref{deltadef}, \eqref{note1}, \eqref{note2}, \eqref{pcount} and \eqref{qrcount}, we obtain
\begin{equation} \label{near}
\begin{split}
& \sum\limits_{p \in \mathbb{G} \cap D(P,MP,\omega,\omega+2\pi/L) } \mu(\mathcal{A}_p)\\ \ge & 
C(MP)^{4\varepsilon-7/3}\cdot \frac{1}{L}\cdot \frac{(M^2-1)P^2+o(L(MP)^2)}{\log N}\cdot 
\frac{(\gamma_2-\gamma_1)(b^2-a^2)P^2+o(N^2)}{\log N}
\end{split}
\end{equation}
for some constant $C=C(c)>0$ if $N\in \mathcal{S}$ and $N\rightarrow \infty$.
By splitting the interval $(1,N]$ into intervals of the form $(P,MP]=(N/M^k,N/M^{k-1}]$ and summing up, it follows from \eqref{near} that
\begin{equation*}
\begin{split}
& \sum\limits_{p \in \mathbb{G} \cap D(1,N,\omega,\omega+2\pi/L)} \mu(\mathcal{A}_p)\\ \ge & 
\frac{C(M^2-1)(\gamma_2-\gamma_1)(b^2-a^2)N^{5/3+4\varepsilon}}{M^{7/3-4\varepsilon}L\log^2 N} \cdot \sum\limits_{k=1}^{\infty} M^{-(5/3+4\varepsilon)k} \cdot (1+o(1))\\
\ge & \frac{C(\gamma_2-\gamma_1)(b^2-a^2)N^{5/3+4\varepsilon}}{M^{2/3}L\log^2 N} 
\cdot (1+o(1))\\
\ge & C\cdot \frac{A}{B} \cdot \frac{(\gamma_2-\gamma_1)(b^2-a^2)N^{5/3+4\varepsilon}}{L\log^2 N}
\end{split}
\end{equation*}
if $N\in \mathcal{S}$ is large enough, where for the last line, we have used \eqref{Mdefi} and $A\le a<b\le B$.   
Splitting the interval $(0,2\pi]$ into intervals of the form $(\omega,\omega+2\pi/L]=(2\pi (k-1)/L,2\pi k/L]$ with $k=1,...,L$, it further follows that
$$
\sum\limits_{\substack{p \in \mathbb{G}\\ |p|\le N}} \mu(\mathcal{A}_p) = 
\sum\limits_{k=1}^L \sum\limits_{p \in \mathbb{G} \cap D(1,N,2\pi (k-1)/L,2\pi k/L)} \mu(\mathcal{A}_p) \ge
C\cdot \frac{A}{B}  \cdot \frac{(\gamma_2-\gamma_1)(b^2-a^2)N^{5/3+4\varepsilon}}{\log^2 N}
$$
if $N$ is large enough. Combining this with \eqref{integral} completes the proof of Theorem \ref{Theo}(i).  

\section{Proof of Theorem \ref{Theo}(ii)} 
In this section, we prove Theorem \ref{Theo}(ii), which is the last task to establish our main result, Theorem \ref{complex}.
\subsection{Sieve theoretical approach}
We extend the treatment in \cite[section 5]{BG1} (see also \cite[section 4]{BG2}), which has its origin in \cite{HJ}, to the situation in $\mathbb{Z}[i]$.  
We point out that there is a mistake in \cite[section 5]{BG1}: The set $\mathcal{A}$ should consist of products of the form
$n[n\alpha]$, not of the form $n[n\alpha][nc\alpha]$, and 
we bound the number of $n$'s such that $n[n\alpha]$ is the product of two primes, not three primes. This mistake, however, doesn't 
affect the method and the final result. As in section 3, we define 
$$
||z||:=\max\left\{||\Re(z)||,||\Im(z)||\right\},
$$
where $\Re(z)$ is the real part and $\Im(z)$ is the imaginary part of $z\in \mathbb{C}$, and for $x\in \mathbb{R}$, $||x||$ denotes the distance of $x$ to the nearest integer. We further define
$$
f(z):=\tilde{f}(\Re(z))+\tilde{f}(\Im(z))i
$$
if $z\in \mathbb{C}$ and $||z||<1/2$, where $\tilde{f}(x)$ is the integer nearest to $x\not\in \mathbb{Z}+1/2$. 

First, we split the interval $(0,N]$ into dyadic intervals $(P/2,P]$ with $P:=N/2^k$, $k=0,1,2,...$. Then we write 
$$
\mathcal{A}_P(\alpha)=\left\{n\cdot f(n\alpha) \ :\  n\in \mathbb{Z}[i], P/2< |n|\le P, \ \max\{||n\alpha||,||nc\alpha||\}\le
\mu\right\},
$$
where
\begin{equation} \label{muedef}
\mu:=\left(\frac{P}{2}\right)^{\varepsilon-1/12}
\end{equation}
if $P^{\varepsilon-1/12}<1/2$, i.e.
\begin{equation} \label{Pcondit}
P>2^{1+1/(1/12-\varepsilon)}.
\end{equation}
It follows that
\begin{equation} \label{upperFN}
F_N(\alpha)\le \sum\limits_{\substack{0\le k\le 1+\log_2\left(N/2^{1/(1/12-\varepsilon)}\right)}}
\sharp \left(\mathbb{G}_2\cap \mathcal{A}_{N/2^k}(\alpha)\right)+O(1),
\end{equation}
where $\mathbb{G}_2$ is the set of products of two Gaussian primes. We bound 
$\sharp\left(\mathbb{G}_2\cap \mathcal{A}_{N/2^k}(\alpha)\right)$ from above using a simple two-dimensional upper bound 
sieve in the setting of Gaussian integers, which is obtained by a standard application of the Selberg sieve in the setting of Gaussian
integers. \\

\begin{lemma} \label{uppersieve} Let $\mathcal{N}$ be a subset of the Gaussian integers and $f_1,f_2:\mathcal{N} \rightarrow \mathbb{Z}[i]$ two functions. 
For $P\ge 2$ and $d_1,d_2\in \mathbb{Z}[i]\setminus\{0\}$ let $T_P(d_1,d_2)$ be the number of $n\in \mathcal{N}$ such that
\begin{equation} \label{satisfy}
f_1(n)\equiv 0 \bmod{d_1}, \quad f_2(n)\equiv 0 \bmod{d_2}, \quad P/2< |n|\le P
\end{equation}
and $G_P(d_1,d_2)$ the number of $n\in \mathcal{N}$ satisfying \eqref{satisfy} such that $f_1(n)$ and $f_2(n)$ are both Gaussian primes. Then
for any $X>0$ and $\varepsilon>0$,
$$
G_P(d_1,d_2)\le \frac{C(\varepsilon)XP^2}{(\log P)^2}+ 
O\left(\sum\limits_{\substack{d_1,d_2\in \mathbb{Z}[i]\setminus \{0\}\\ 1\le |d_1|,|d_2|\le P^{\varepsilon}}} 
|d_1d_2|^{\varepsilon}\left|T_P(d_1,d_2) - \frac{XP^2}{|d_1|^2|d_2|^2}\right|\right)
$$
as $P\rightarrow \infty$, where $C(\varepsilon)$ is a constant depending only on $\varepsilon$. 
\end{lemma}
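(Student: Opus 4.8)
The plan is to deduce the bound from a routine two–parameter Selberg upper bound sieve, carried out in $\mathbb{Z}[i]$. Fix $z:=P^{\varepsilon}$ and let $\Pi_z$ be the product of the Gaussian primes of norm at most $z$. Since imposing $d_i\mid f_i(n)$ only removes elements from the counted set, $G_P(d_1,d_2)\le G_P(1,1)$, so it suffices to bound $G_P(1,1)$, the number of $n\in\mathcal N$ with $P/2<|n|\le P$ for which $f_1(n)$ and $f_2(n)$ are both Gaussian primes. A prime value $f_i(n)$ of norm $>z$ has no prime divisor of norm $\le z$, hence is coprime to $\Pi_z$; suppressing the (in the cases of interest, negligible) contribution of $n$ for which $f_1(n)$ or $f_2(n)$ has norm $\le z$, where $|f_i(n)|\asymp|n|\asymp P$, we obtain
$$G_P(1,1)\le \#\{n\in\mathcal N:\ P/2<|n|\le P,\ (f_1(n),\Pi_z)=(f_2(n),\Pi_z)=1\}+O(P^{\varepsilon}).$$
Now introduce real Selberg weights $(\lambda_a),(\mu_b)$, indexed by squarefree Gaussian integers dividing $\Pi_z$ with norm $\le\xi:=P^{\varepsilon}$, normalised by $\lambda_{(1)}=\mu_{(1)}=1$ and satisfying the usual bound $|\lambda_a|,|\mu_b|\le1$. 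When $(f_i(n),\Pi_z)=1$ the only common divisor of $f_i(n)$ and $\Pi_z$ is a unit, so $\mathbf 1[(f_1(n),\Pi_z)=1]\,\mathbf 1[(f_2(n),\Pi_z)=1]\le\big(\sum_{a\mid f_1(n)}\lambda_a\big)^2\big(\sum_{b\mid f_2(n)}\mu_b\big)^2$ pointwise.

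Summing this majorant over $n$ with $P/2<|n|\le P$ and expanding the two squares gives
$$G_P(1,1)\le \sum_{a_1,a_2,b_1,b_2}\lambda_{a_1}\lambda_{a_2}\mu_{b_1}\mu_{b_2}\,T_P\big([a_1,a_2],[b_1,b_2]\big)+O(P^{\varepsilon}),$$
where $[\,\cdot\,,\,\cdot\,]$ denotes the least common multiple. I then replace $T_P\big([a_1,a_2],[b_1,b_2]\big)$ by the idealised value $XP^2/(\mathcal N([a_1,a_2])\mathcal N([b_1,b_2]))$: the resulting main term factors as $XP^2\cdot Q_1(\lambda)\cdot Q_2(\mu)$ with $Q_1(\lambda)=\sum_{a_1,a_2}\lambda_{a_1}\lambda_{a_2}/\mathcal N([a_1,a_2])$, while the incurred discrepancy is, using $|\lambda|,|\mu|\le1$ and grouping the quadruples $(a_1,a_2,b_1,b_2)$ by the pair $(d_1,d_2):=([a_1,a_2],[b_1,b_2])$ — for which $|d_1|,|d_2|\le P^{\varepsilon}$ and the number of contributing quadruples is $\ll\mathcal N(d_1d_2)^{\varepsilon}$ by the divisor bound $\tau(m)\ll\mathcal N(m)^{\varepsilon}$ in $\mathbb{Z}[i]$ — precisely of the form of the error term in the statement.

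It remains to choose $(\lambda_a),(\mu_b)$ so that $Q_1,Q_2$ are small. For the idealised distribution the residue class $0\bmod p$ for $f_1$ has relative density $\mathcal N(p)^{-1}$, so the underlying sieve is one–dimensional, and the classical Selberg diagonalisation yields $\min Q_1\le 1/\mathcal L$ with $\mathcal L=\sum_{d\mid\Pi_z,\ \mathcal N(d)\le\xi}\prod_{p\mid d}(\mathcal N(p)-1)^{-1}$. Since $\prod_{p\mid d}(\mathcal N(p)-1)^{-1}\ge\mathcal N(d)^{-1}$ for squarefree $d$, we have $\mathcal L\ge\sum_{\mathcal N(d)\le\xi}\mu^2(d)\mathcal N(d)^{-1}\gg\log\xi\gg\varepsilon\log P$, the last inequality being a Mertens-type estimate for $\mathbb{Z}[i]$, itself a consequence of the prime number theorem for Gaussian primes (Theorem \ref{PNT}). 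The same holds for $Q_2$, so the main term is $\ll XP^2(\varepsilon\log P)^{-2}=C(\varepsilon)XP^2/(\log P)^2$, which finishes the proof. There is no serious obstacle: this is a textbook Selberg sieve, and the only places needing a little care are the transcription of the sieve apparatus to $\mathbb{Z}[i]$ (replacing absolute values by norms and the real Mertens estimate by its Gaussian analogue) and the bookkeeping that recasts the expanded sieve sum as the error term exactly as stated.
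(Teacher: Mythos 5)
Your proof is essentially correct and implements exactly what the paper alludes to: the author states the lemma as ``a standard application of the Selberg sieve in the setting of Gaussian integers'' and gives no further detail, and your argument is the routine two--parameter $\Lambda^2$--sieve transcribed to $\mathbb{Z}[i]$. The structure is the right one: reduce to $G_P(1,1)$ (the divisibility conditions can only shrink the count), majorize the coprimality indicators by squares of Selberg weights, expand, replace $T_P([a_1,a_2],[b_1,b_2])$ by the heuristic value, diagonalize each quadratic form to get $Q_i\le 1/\mathcal L$ with $\mathcal L\gg\log\xi$, and group the quadruples by their lcm--pair using the divisor bound $\tau(d)\ll \mathcal N(d)^{\varepsilon}$ to obtain the stated error.

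Two small points deserve a remark, though neither is a real gap. First, your step that suppresses $n$ for which $f_1(n)$ or $f_2(n)$ is a prime of norm $\le z=P^{\varepsilon}$ tacitly uses that $|f_i(n)|\gg P$ (so that there are no such $n$ at all); this is automatic in the application ($f_1(n)=n$, $f_2(n)=f(n\alpha)$ with $|\alpha|\ge A>0$), but the lemma as literally phrased places no hypothesis on $f_1,f_2$, so strictly one should either record the implicit lower bound $|f_i(n)|\gg P$ or observe, as you do parenthetically, that the contribution is void in the case of interest. Second, the inequality $|\lambda_a|,|\mu_b|\le 1$ for the optimized Selberg weights is a classical but nontrivial fact (it goes back to Selberg and is proved, e.g., in Halberstam--Richert); it transfers to $\mathbb{Z}[i]$ by the same computation, but it is worth citing rather than asserting. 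Finally, a cosmetic bookkeeping note: the number of quadruples $(a_1,a_2,b_1,b_2)$ with $[a_1,a_2]=d_1$, $[b_1,b_2]=d_2$ is $\ll \tau(d_1)^2\tau(d_2)^2\ll (\mathcal N(d_1)\mathcal N(d_2))^{\varepsilon/2}=|d_1d_2|^{\varepsilon}$, which is the power of $|d_1d_2|$ you want; as written you record $\mathcal N(d_1d_2)^{\varepsilon}=|d_1d_2|^{2\varepsilon}$, so the exponent should be halved before matching the lemma. None of this affects the validity of the argument.
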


Here we consider the case when
\begin{equation} \label{max}
\mathcal{N}:=\{n\in \mathbb{Z}[i] : \max(||n\alpha||,||nc\alpha||)\le \mu\},
\end{equation}
$f_1(n)=n$ and $f_2(n)=f(n\alpha)$. We write $T_P(\alpha;d_1,d_2):=T_P(d_1,d_2)$. Clearly, $T_P(\alpha;d_1,d_2)$ equals the number of
$n\in \mathbb{Z}[i]$ with $P/2<|n|\le P$ such that
\begin{equation} \label{threeconds}
\frac{P}{2|d_1|}<|n|\le \frac{P}{|d_1|}, \quad \left|\left|\frac{nd_1\alpha}{d_2}\right|\right|\le \frac{\mu}{|d_2|}, \quad 
||nd_1c\alpha||\le \mu.
\end{equation}
Heuristically, $T_P(\alpha;d_1,d_2)$ should behave like $12\pi P^2\mu^4/(|d_1|^2|d_2|^2)$. Therefore, we write 
\begin{equation} \label{write}
T_P(\alpha;d_1,d_2)=\frac{12\pi P^2\mu^4}{|d_1|^2|d_2|^2}+ E_P(\alpha;d_1,d_2).
\end{equation}
Then, applying Lemma \ref{uppersieve} gives
\begin{equation}
\sharp\left(\mathbb{G}_2\cap \mathcal{A}_P\right)\le \frac{C(\varepsilon)P^2\mu^4}{\log^2 P} + O\left(\tilde{J}_P(\alpha)\right),
\end{equation}
where 
\begin{equation*}
\begin{split}
\tilde{J}_P(\alpha):= \sum\limits_{1\le |d_1d_2|\le P^{\varepsilon}} |d_1d_2|^{\varepsilon}|E_P(\alpha;d_1,d_2)|.
\end{split}
\end{equation*}
Hence, by \eqref{upperFN}, to establish the claim in Theorem \ref{Theo}(ii), it suffices to show that
\begin{equation} \label{average}
\begin{split}
& \sum\limits_{\substack{0\le k\le 1+\log_2(N/2^{1/(1/12-\varepsilon)})}}\sum\limits_{1\le |d_1d_2|\le N^{\varepsilon}} |d_1d_2|^{\varepsilon}\int\limits_{-\pi}^{\pi} \int\limits_A^B 
\left|E_{N/2^k}(Re^{i\theta};d_1,d_2)\right|\ dR \ d\theta\\  = & 
o\left(\frac{N^2\mu^4}{\log^2 N}\right) 
\end{split}
\end{equation}
as $N\rightarrow \infty$ and $N\in \mathcal{S}$.

\subsection{Fourier analysis}
Throughout the sequel, we assume that \eqref{Pcondit} is satisfied. 
We use Fourier analysis to express $E_P(\alpha;d_1,d_2)$ in terms of trigonometrical polynomials. We have

\begin{equation*}
\begin{split}
T_P(\alpha;d_1,d_2)= \sum\limits_{\substack{n\in \mathbb{Z}[i]\\ P/|2d_1|<|n|\le P/|d_1|}} &
\left(\left[\Re\left(\frac{nd_1\alpha}{d_2}\right)+\frac{\mu}{|d_2|}\right]-\left[\Re\left(\frac{nd_1\alpha}{d_2}\right)-\frac{\mu}{|d_2|}
\right]
\right)\times\\ &
\left(\left[\Im\left(\frac{nd_1\alpha}{d_2}\right)+\frac{\mu}{|d_2|}\right]-\left[\Im\left(\frac{nd_1\alpha}{d_2}\right)-\frac{\mu}{|d_2|}\right]
\right)\times\\ & 
\left(\left[\Re\left(nd_1c\alpha\right)+\mu\right]-\left[\Re\left(nd_1c\alpha\right)-\mu\right]\right)\times\\ &
\left(\left[\Im\left(nd_1c\alpha\right)+\mu\right]-\left[\Im\left(nd_1c\alpha\right)-\mu\right]\right),
\end{split}
\end{equation*}
where $[x]$ is the integral part of $x\in \mathbb{R}$. 
Writing $\psi(x)=x-[x]-1/2$, it follows that
\begin{equation*}
\begin{split}
& T_P(\alpha;d_1,d_2)= \\ & \sum\limits_{\substack{n\in \mathbb{Z}[i]\\ P/|2d_1|<|n|\le P/|d_1|}} 
\left(\psi\left(\Re\left(\frac{nd_1\alpha}{d_2}\right)-\frac{\mu}{|d_2|}\right)-
\psi\left(\Re\left(\frac{nd_1\alpha}{d_2}\right)+\frac{\mu}{|d_2|}\right)
+\frac{2\mu}{|d_2|}\right)\times\\ &
\left(\psi\left(\Im\left(\frac{nd_1\alpha}{d_2}\right)-\frac{\mu}{|d_2|}\right)-
\psi\left(\Im\left(\frac{nd_1\alpha}{d_2}\right)+\frac{\mu}{|d_2|}\right)+\frac{2\mu}{|d_2|}\right)\times\\ & 
\left(\psi\left(\Re\left(nd_1c\alpha\right)-\mu\right)-\psi\left(\Re\left(nd_1c\alpha\right)+\mu\right)+2\mu\right)\times\\ &
\left(\psi\left(\Im\left(nd_1c\alpha\right)-\mu\right)-\psi\left(\Im\left(nd_1c\alpha\right)+\mu\right)+2\mu\right).
\end{split}
\end{equation*}

Let 
 \begin{equation} \label{J1J2def}
 J_1:= \left[\frac{N^{\varepsilon}|d_2|}{\mu}\right]  \quad \mbox{and} \quad  J_2:=\left[\frac{N^{\varepsilon}}{\mu}\right].
 \end{equation}
 Then from Lemma \ref{Vaaler}, using
$$
\frac{1}{j}\cdot (e(jx)-e(-jx))\ll x
$$
for any $j\in \mathbb{N}$ and $x\in \mathbb{R}$, we deduce that
\begin{equation} \label{deduce}
\begin{split}
T_P(\alpha;d_1,d_2)= & \frac{16\mu^4}{|d_2|^2}\cdot \sum\limits_{\substack{n\in \mathbb{Z}[i]\\ P/|2d_1|<|n|\le P/|d_1|}} 1 + O\left(1+\frac{P^2}{J_1^2J_2^2|d_1|^2}+F_P(\alpha;d_1,d_2)\right)\\
= & \frac{12\pi P^2\mu^4}{|d_1|^2 |d_2|^2} + O\left(\frac{P\mu^2}{|d_1| |d_2|}+\frac{P^2}{J_1^2J_2^2|d_1|^2}+F_P(\alpha;d_1,d_2)\right), 
\end{split}
\end{equation}
where 
\begin{equation*}
\begin{split}
F_P(\alpha;d_1,d_2)= & \frac{\mu^4}{|d_2|^2}\cdot \sum\limits_{\substack{(m_1,m_2,m_3,m_4)\in \mathbb{Z}^4\setminus\{(0,0,0,0)\}
\\ |m_1|\le J_1,\ |m_2|\le J_1\\ |m_3|\le J_2,\ |m_4|\le J_2}} 
\\ & \Big| \sum\limits_{\substack{n\in \mathbb{Z}[i]\\ P/|2d_1|<|n|\le P/|d_1|}} e\Big(m_1\cdot \Re\Big(\frac{nd_1\alpha}{d_2}\Big)+ m_2\cdot 
\Im\Big(\frac{nd_1\alpha}{d_2}\Big)\\ & +m_3\cdot \Re(nd_1c\alpha)+ m_4\cdot 
\Im(nd_1c\alpha)\Big) \Big|\\
= & \frac{\mu^4}{|d_2|^2}\cdot 
\sum\limits_{\substack{(m_1,m_2,m_3,m_4)\in \mathbb{Z}^4\setminus\{(0,0,0,0)\}\\ |m_1|\le J_1,\ |m_2|\le J_1\\ |m_3|\le J_2,\ |m_4|\le J_2}} 
\\ & \Big| \sum\limits_{\substack{n\in \mathbb{Z}[i]\\ P/|2d_1|<|n|\le P/|d_1|}} e\Big(\Im\Big(nd_1\alpha \cdot 
\Big(\frac{m_1+im_2}{d_2}+(m_3+im_4)c\Big)\Big)\Big) \Big|.
\end{split}
\end{equation*}
For the last line of \eqref{deduce}, we have used the elementary bound for the error term in the Gauss circle problem, namely
$$
\sum\limits_{\substack{n\in \mathbb{Z}[i]\\ |n|\le x}} 1 = \pi x^2+O(x).
$$
In the following, we will prove that
\begin{equation}  \label{end}
\int\limits_{-\pi}^{\pi} \int\limits_A^B 
\left|F_{P}(Re^{i\theta};d_1,d_2)\right|\ dR \ d\theta \ll \frac{N^{2-4\varepsilon}\mu^4}{|d_1|^2|d_2|^2}.
\end{equation} 
In view of \eqref{muedef}, \eqref{write}, \eqref{J1J2def} and \eqref{deduce},  this suffices to prove \eqref{average} and therefore establishes the claim of 
Theorem \ref{Theo}(ii). 

We first bound $F_P(\alpha;d_1,d_2)$ for individual $\alpha$. Using \eqref{lin} with $f_1=-\pi$ and $f_2=\pi$, we have
\begin{equation} \label{lino}
\sum\limits_{\substack{n\in \mathbb{Z}[i]\\ \tilde{x}<|n|\le x}} e\left(\Im(n\kappa)\right) \ll x \cdot 
\min\left\{||\Im(\kappa)||^{-1},x\right\}^{1/2} \cdot \min\left\{||\Re(\kappa)||^{-1},x\right\}^{1/2}.
\end{equation}
It follows that
\begin{equation} \label{follows}
\begin{split}
& F_P(\alpha;d_1,d_2)\\ \ll &  
\frac{P\mu^4}{|d_1|\cdot |d_2|^2}\cdot  
\sum\limits_{\substack{(m_1,m_2,m_3,m_4)\in \mathbb{Z}^4\setminus\{(0,0,0,0)\}\\ |m_1|\le J_1,\ |m_2|\le J_1\\ |m_3|\le J_2,\ |m_4|\le J_2}}
\\ & \min\left\{\left|\left|\Im\left(d_1\Big(\frac{m_1+im_2}{d_2}+(m_3+im_4)c\Big)\alpha\right)\right|\right|^{-1}, \frac{P}{|d_1|}\right\}^{1/2} \times\\
& \min\left\{\left|\left|\Re\left(d_1\Big(\frac{m_1+im_2}{d_2}+(m_3+im_4)c\Big)\alpha\right)\right|\right|^{-1}, \frac{P}{|d_1|}\right\}^{1/2}\\
\le & \frac{P\mu^4}{|d_1|\cdot |d_2|^2}\cdot
\sum\limits_{\substack{(n_1,n_2)\in \mathbb{Z}[i]^2\setminus\{(0,0)\}\\ |n_1|\le 2J_1\\ |n_2|\le 2J_2}}
\min\left\{\left|\left|\Im\left(d_1\Big(\frac{n_1}{d_2}+n_2c\Big)\alpha\right)\right|\right|^{-1}, \frac{P}{|d_1|}\right\}^{1/2} \times\\
& \min\left\{\left|\left|\Re\left(d_1\Big(\frac{n_1}{d_2}+n_2c\Big)\alpha\right)\right|\right|^{-1}, \frac{P}{|d_1|}\right\}^{1/2}.\\
\end{split}
\end{equation}

\subsection{Average estimation for $F_P(\alpha;d_1,d_2)$} 
To bound the double integral on the left-hand side of \eqref{end}, 
we now use the following lemma. 

\begin{lemma}\label{av} Let $z\in \mathbb{C}$ and $Y>0$. Then
\begin{equation*}
\begin{split}
& \int\limits_{-\pi}^{\pi} \int\limits_{A}^{B} \min\left\{\left|\left|\Im\left(zRe^{\theta i}\right)\right|\right|^{-1}, Y\right\}^{1/2} \cdot
\min\left\{\left|\left|\Re\left(zRe^{\theta i}\right)\right|\right|^{-1}, Y\right\}^{1/2} \ dR \ d\theta\\
\ll_{A,B}  & \max\left\{1,|z|^{-1}\right\}\log (2+Y).
\end{split}
\end{equation*}
\end{lemma}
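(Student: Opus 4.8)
The plan is to pass from the weighted integral over the circular annulus $\{A\le R\le B\}$ to an ordinary Lebesgue integral over a dilated annulus in $\mathbb{C}$, and then to estimate the latter by decomposing it into unit squares of the integer lattice. We may assume $z\neq 0$, the case $z=0$ being trivial. Writing $u=Re^{\theta i}$ and using that, on the region $A\le|u|\le B$, the measure $dR\,d\theta$ is comparable to planar Lebesgue measure $d\lambda(u)$ (since $R\asymp_{A,B}1$ there), and then substituting $w=zu$ (which scales $\lambda$ by $|z|^2$ and carries $\{A\le|u|\le B\}$ onto $\mathcal{A}_z:=\{A|z|\le|w|\le B|z|\}$), the left-hand side becomes
$$
\asymp_{A,B}\ |z|^{-2}\int_{\mathcal{A}_z}g(w)\,d\lambda(w),\qquad g(w):=\min\{||\Im w||^{-1},Y\}^{1/2}\min\{||\Re w||^{-1},Y\}^{1/2}.
$$
Thus it suffices to prove $\int_{\mathcal{A}_z}g\,d\lambda\ll_{A,B}\max\{|z|^2,|z|\}$, since then the factor $|z|^{-2}$ produces $\max\{1,|z|^{-1}\}$, which in turn is $\ll\max\{1,|z|^{-1}\}\log(2+Y)$ because $\log(2+Y)\ge\log 2>0$.

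The only one-dimensional inputs needed, both obtained by splitting the integral at $|t|=Y^{-1}$, are
$$
\int_I\min\{||t||^{-1},Y\}^{1/2}dt\ll 1\quad\text{for every interval }I\text{ of length }\le1,\qquad\int_{-a}^{a}\min\{|t|^{-1},Y\}^{1/2}dt\ll a^{1/2}\quad(0<a\le\tfrac12),
$$
the first using that $t\mapsto||t||$ has period $1$. I would then split according to the size of $|z|$. If $|z|\ge 1/(2B)$, I cover $\mathcal{A}_z$ by the unit lattice squares it meets; each such square lies in the fattened annulus $\{A|z|-\sqrt2\le|w|\le B|z|+\sqrt2\}$, of area $\ll_B|z|^2+1\ll_B|z|^2$, so there are $\ll_{A,B}|z|^2$ of them, and on a square $Q=[a,a+1]\times[b,b+1]$ the integrand factors, whence by Fubini and the first bound $\int_Q g=\bigl(\int_a^{a+1}\min\{||s||^{-1},Y\}^{1/2}ds\bigr)\bigl(\int_b^{b+1}\min\{||t||^{-1},Y\}^{1/2}dt\bigr)\ll1$; summing yields $\int_{\mathcal{A}_z}g\ll_{A,B}|z|^2$. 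If instead $|z|<1/(2B)$, then $\mathcal{A}_z\subseteq\{|w|\le\tfrac12\}$, on which $||\Re w||=|\Re w|$ and $||\Im w||=|\Im w|$; bounding $\int_{\mathcal{A}_z}g$ by the integral of the corresponding product over $[-B|z|,B|z|]^2$, factoring, and using the second bound twice gives $\int_{\mathcal{A}_z}g\ll_B|z|$. Since $\max\{|z|^2,|z|\}$ dominates both $|z|^2$ and $|z|$, this yields the claimed estimate in every case.

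I expect the only genuinely delicate point to be the small-$|z|$ regime: there the lattice-square count alone gives merely $\int_{\mathcal{A}_z}g\ll1$, hence a final bound $\ll|z|^{-2}$, which is weaker than the required $|z|^{-1}$. One must exploit that near the origin the distance-to-$\mathbb{Z}$ norm agrees with the absolute value, picking up an extra factor $|z|^{1/2}$ from each of the two one-dimensional integrals over $[-B|z|,B|z|]$. Everything else — the change of variables, the period-$1$ reduction of the $||\cdot||$-integrals, and the elementary bound for the number of lattice unit squares meeting a planar region in terms of its area — is routine bookkeeping.
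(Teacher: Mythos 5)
Your proof is correct, but it takes a genuinely different route from the paper's. Both begin by rescaling to a dilated annulus (the paper via $r=|z|R$, you via $w=zu$ together with the comparability of $dR\,d\theta$ and planar measure on $A\le|u|\le B$). From there the paper passes to rectangular coordinates over a box $[|z|A/2,|z|B]^2$, factors the two-dimensional integral into a square of a one-dimensional one, and then applies Cauchy--Schwarz to replace the exponent $1/2$ by exponent $1$, at which point it can invoke Lemma~5.1 of \cite{BG1} (a one-dimensional estimate for $\int\min\{\|\beta x\|^{-1},Y\}\,dx$); it is this last step of raising the power that is responsible for the $\log(2+Y)$ in the statement. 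You instead stay with the exponent $1/2$ and tile the dilated annulus by unit lattice squares: on each square the integrand factors and the one-dimensional integral of $\min\{\|t\|^{-1},Y\}^{1/2}$ over a unit interval is $O(1)$ with no logarithm, and for $|z|\ge 1/(2B)$ the number of squares is $\ll_{A,B}|z|^{2}$, while for $|z|<1/(2B)$ you exploit $\|\cdot\|=|\cdot|$ near the origin and the sharper bound $\int_{-a}^{a}\min\{|t|^{-1},Y\}^{1/2}\,dt\ll a^{1/2}$ to obtain the contribution $\ll|z|$, which after the prefactor $|z|^{-2}$ yields $\max\{1,|z|^{-1}\}$ exactly as the paper obtains it from its external lemma. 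Your version is self-contained (no appeal to \cite{BG1}) and in fact gives the slightly stronger bound $\ll_{A,B}\max\{1,|z|^{-1}\}$ without the factor $\log(2+Y)$; the paper's route is shorter given that Lemma~5.1 of \cite{BG1} is already available, at the cost of the spurious log. One small remark: in the regime $1/(2B)\le|z|<1$ the lattice-square count is actually $O_{A,B}(1)$ rather than $\asymp|z|^{2}$, so the resulting bound is $|z|^{-2}$; this is still $\ll_{B}|z|^{-1}\le\max\{1,|z|^{-1}\}$ because $|z|^{-1}\le 2B$ there, so your argument goes through, but it is worth stating this absorption explicitly rather than asserting the count is $\ll|z|^{2}$ throughout.
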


\begin{proof} By change of variables, we have
\begin{equation*}
\begin{split}
& \int\limits_{-\pi}^{\pi} \int\limits_{A}^{B} \min\left\{\left|\left|\Im\left(zRe^{\theta i}\right)\right|\right|^{-1}, Y\right\}^{1/2} \cdot
\min\left\{\left|\left|\Re\left(zRe^{\theta i}\right)\right|\right|^{-1}, Y\right\}^{1/2} \ dR \ d\theta\\ = &  
\frac{1}{|z|^2}\cdot \int\limits_{-\pi}^{\pi} \int\limits_{|z|A}^{|z|B} \min\left\{\left|\left|\Im\left(r e^{\theta i}\right)\right|\right|^{-1}, Y\right\}^{1/2} \cdot
\min\left\{\left|\left|\Re\left(re^{\theta i}\right)\right|\right|^{-1}, Y\right\}^{1/2} \ dr \ d\theta.
\end{split}
\end{equation*}
Changing from polar to affine coordinates, and using Cauchy-Schwarz, we get 
\begin{equation*}
\begin{split}
&  \frac{1}{|z|^2} \cdot \int\limits_{-\pi}^{\pi} \int\limits_{|z|A}^{|z|B} \min\left\{\left|\left|\Im\left(re^{\theta i}\right)\right|\right|^{-1}, Y\right\}^{1/2} \cdot
\min\left\{\left|\left|\Re\left(re^{\theta i}\right)\right|\right|^{-1}, Y\right\}^{1/2} \ dr \ d\theta\\
\le & \frac{1}{|z|^2}\cdot \int\limits_{|z|A/2}^{|z|B} \int\limits_{|z|A/2}^{|z|B}\min\left\{\left|\left|\Im\left(x+yi\right)\right|\right|^{-1}, Y\right\}^{1/2} \cdot
\min\left\{\left|\left|\Re\left(x+yi\right)\right|\right|^{-1}, Y\right\}^{1/2} \ dy \ dx\\
= & \frac{1}{|z|^2}\cdot \int\limits_{|z|A/2}^{|z|B}  \int\limits_{|z|A/2}^{|z|B}\min\left\{\left|\left|\Im\left(x+yi\right)\right|\right|^{-1}, Y\right\}^{1/2} \cdot
\min\left\{\left|\left|\Re\left(x+yi\right)\right|\right|^{-1}, Y\right\}^{1/2} \ dy \ dx\\
= & \frac{1}{|z|^2}\cdot \left( \int\limits_{|z|A/2}^{|z|B}  \min\left\{\left|\left| x\right|\right|^{-1}, Y\right\}^{1/2} \ dx\right)^2\\
\ll & \frac{1}{|z|}\cdot  \left(B-\frac{A}{2}\right)\cdot \int\limits_{|z|A/2}^{|z|B}  \min\left\{\left|\left| x\right|\right|^{-1}, Y\right\} \ dx\\
= &  \left(B-\frac{A}{2}\right)\cdot \int\limits_{A/2}^{B}  \min\left\{\left|\left| |z|x\right|\right|^{-1}, Y\right\} \ dx.
\end{split}
\end{equation*}
From Lemma 5.1 in \cite{BG1}, it follows that
$$
\int\limits_{A/2}^{B}  \min\left\{\left|\left|\ |z|x\ \right|\right|^{-1}, Y\right\} \ dx 
\ll_{A,B} \max\left\{1, \left|z\right|^{-1}\right\}\log (2+Y).
$$
Putting everything together proves the claim.
\end{proof}

From \eqref{follows} and Lemma \ref{av}, we deduce that
\begin{equation} \label{EP}
\begin{split}
& \int\limits_{-\pi}^{\pi} \int\limits_A^B 
\left|F_{P}(Re^{i\theta};d_1,d_2)\right|\ dR \ d\theta\\ \ll_{A,B} & \frac{P\mu^4\log P}{|d_1|\cdot |d_2|^2}\cdot 
\sum\limits_{\substack{(n_1,n_2)\in \mathbb{Z}[i]^2\setminus\{(0,0)\}\\ |n_1|\le 2J_1\\ |n_2|\le 2J_2}}
  \max\left\{1, \left|d_1\left(\frac{n_1}{d_2}+n_2c\right)\right|^{-1}\right\}.
\end{split}
\end{equation}

\subsection{Final estimation}
Clearly, if $0<|d_1|,|d_2|\le P^{\varepsilon}$ and $J_1\ge |d_2|/\mu$, then
\begin{equation} \label{EP1}
\begin{split}
& \sum\limits_{\substack{(n_1,n_2)\in \mathbb{Z}[i]^2\setminus\{(0,0)\}\\ |n_1|\le 2J_1\\ |n_2|\le 2J_2}} \max\left\{1, \left|d_1\left(\frac{n_1}{d_2}+n_2c\right)\right|^{-1}\right\}\\
\ll & \frac{1}{|d_1|}\cdot \sum\limits_{\substack{n_2\in \mathbb{Z}[i]\setminus\{0\}\\ |n_2|\le 2J_2}} \left(\min\limits_{n_1\in \mathbb{Z}[i]} \left|\frac{n_1}{d_2}+n_2c\right|\right)^{-1} +
J_2^2\cdot \sum\limits_{\substack{n\in \mathbb{Z}[i]\setminus\{0\}\\ |n|\le 3J_1}} \max\left\{1, \left|\frac{d_2}{d_1n}\right|\right\}\\
\ll & \frac{1}{|d_1|}\cdot \sum\limits_{\substack{n_2\in \mathbb{Z}[i]\setminus\{0\}\\ |n_2|\le 2J_2}} \left(\min\limits_{n_1\in \mathbb{Z}[i]} \left|\frac{n_1}{d_2}+n_2c\right|\right)^{-1} +
J_1^2J_2^2.
\end{split}
\end{equation}
Since $N$ is the sixth power of absolute value of a denominator of the Hurwitz continued fraction approximation of $c$, we have 
$$
c=\frac{a}{q}+O\left(\frac{1}{|q|^2}\right)
$$
for some $a,q\in \mathbb{Z}[i]$ with $|q|^6=N$. Hence,
$$
\left|\frac{n_1}{d_2}+n_2c\right| \ge \frac{1}{|d_2q|}+O\left(\frac{J_2}{|q|^2}\right).
$$
Since
$$
\frac{1}{|d_2q|}\ge P^{-\varepsilon}N^{-1/6} \ge N^{-1/6-\varepsilon},
$$
it follows that 
$$
\left|\frac{n_1}{d_2}+n_2c\right| \gg N^{-1/6-\varepsilon}
$$
if $n_1\in \mathbb{Z}[i]$, $n_2\in  \mathbb{Z}[i]\setminus\{0\}$, $|n_2|\le 2J_2$ and $N$ is large enough, 
where we recall that $J_2=[N^{\varepsilon}/\mu]\le N^{1/12+\varepsilon}$.  Hence, from \eqref{EP1}, we deduce that
\begin{equation} \label{EP2}
\sum\limits_{\substack{(n_1,n_2)\in \mathbb{Z}[i]^2\setminus\{(0,0)\}\\ |n_1|\le 2J_1\\ |n_2|\le 2J_2}} \max\left\{1, \left|d_1\left(\frac{n_1}{d_2}+n_2c\right)\right|^{-1}\right\}
\ll N^{1/6+\varepsilon}J_2^2 + J_1^2J_2^2.
\end{equation}

Combining \eqref{EP} and \eqref{EP2} , we obtain
\begin{equation*} 
\int\limits_{-\pi}^{\pi} \int\limits_A^B 
\left|F_{P}(Re^{i\theta};d_1,d_2)\right|\ dR \ d\theta\\ \ll_{A,B}  
\frac{P\mu^4\log P}{|d_1|\cdot |d_2|^2}\cdot \left(N^{1/6+\varepsilon}J_2^2 + J_1^2J_2^2\right),
\end{equation*}
from which \eqref{end} follows using \eqref{muedef} and \eqref{J1J2def}. This completes the proof of Theorem \ref{Theo}(ii).

\end{document}